\documentclass[a4paper,11pt]{amsart}

\usepackage{geometry}
\usepackage{amssymb,amsmath,amsthm,amscd}
\usepackage[ansinew]{inputenc} 
\usepackage[english]{babel}

\usepackage{newtxtext}
\usepackage{newtxmath}
\usepackage[T1]{fontenc}
\usepackage{mathrsfs}
\usepackage{amscd}
\usepackage{currfile}
\usepackage{bm}
\usepackage{graphicx}
\usepackage{multirow}
\usepackage[colorlinks]{hyperref}
\usepackage{color}

\newtheorem{theorem}{Theorem}
\newtheorem*{theorem*}{Theorem}
\newtheorem{definition}[theorem]{Definition}
\newtheorem{lemma}[theorem]{Lemma}
\newtheorem{proposition}[theorem]{Proposition}
\newtheorem{corollary}[theorem]{Corollary}
\newtheorem{example}[theorem]{Example}
\newtheorem{examples}[theorem]{Examples}

\theoremstyle{definition}
\newtheorem{remark}[theorem]{Remark}

\newcommand{\oo}{{\mathbb{O}}}
\newcommand{\hh}{{\mathbb{H}}}
\newcommand{\HH}{{\mathbb{H}}}

\newcommand{\cc}{{\mathbb{C}}}
\newcommand{\rr}{{\mathbb{R}}}
\newcommand{\zz}{{\mathbb{Z}}}
\newcommand{\nn}{{\mathbb{N}}}

\newcommand{\Eu}{{\mathbb{E}}}

\newcommand{\Aa}{\mathbb{A}}
\newcommand{\s}{{\mathbb{S}}}

\newcommand{\dB}{\overline\partial_{\B}}
\newcommand{\dM}{\overline\partial_{M}}
\newcommand{\GB}{\Gamma_\B}

\newcommand{\B}{\mathcal{B}}

\newcommand{\sr}{\mathcal{SR}}

\newcommand{\SL}{\mathcal{S}}

\newcommand{\I}{\mathcal{I}}
\newcommand{\F}{\mathcal{F}}
\newcommand{\M}{\mathcal{M}}
\newcommand{\PP}{\mathcal{P}}

\newcommand{\dibar}{\overline\partial}

\newcommand{\dif}{\vartheta_\B}
\newcommand{\difbar}{\overline{\vartheta}_\B}
\newcommand{\difM}{\vartheta_M}
\newcommand{\difbarM}{\overline{\vartheta}_M}
\newcommand\IM{\operatorname{Im}}
\newcommand\RE{\operatorname{Re}}
\newcommand\dds[1]{\partial_{x_{#1}}}

\newcommand\vs[1]{{#1}_s^\circ}
\newcommand\sd[1]{{#1}'_s}

\newcommand\Span{\operatorname{span}}

\newcommand{\ui}{\imath}

\newcommand{\OO}{\Omega}

\newcommand{\mbb}{\mathbb}

\newcommand{\R}{\mbb{R}}
\newcommand{\mc}{\mathcal}

\newcommand{\dd[3]}{\frac{\partial #2}{\partial #3}}
 
 \newcommand{\C}{\mbb{C}}
 
 \newcommand{\Q}{\mc{Q}}

 \newcommand{\bc}{\begin{center}}
 \newcommand{\ec}{\end{center}}

\newcommand{\Rn}{\mathbb R_n}

\newcommand{\AM}{\mathcal{AM}}

\newcommand{\DD}{\underline{D}}
\newcommand{\x}{\underline x}
\newcommand{\sC}{\underline S_\B}
\newcommand{\SA}{\underline S_A}
\newcommand{\wsC}{\widetilde{\underline S}_\B}
\newcommand{\wSA}{\widetilde{\underline S}_A}
\newcommand{\tGamma}{\widetilde\Gamma_\B}
\newcommand{\GammaA}{\widetilde\Gamma_A}

\newcommand{\SP}{\mathcal{S}}

\begin{document}

\title{Dunkl regularity over alternative $*$-algebras
}

\author{Giulio Binosi
}
\address[G.\ Binosi]{ORCID: 0000-0002-4733-6180}
\email[G.\ Binosi]{binosi@altamatematica.it}

\thanks{The first author is a member the INdAM Research group GNSAGA and was partially supported by INdAM's project SUNRISE and by the grant ``Progetto di Ricerca INdAM, Teoria delle funzioni ipercomplesse e applicazioni''}

\author{Alessandro Perotti
}
\address[A.\ Perotti]{Department of Mathematics, University of Trento, Via Sommarive 14, Trento Italy}
\address[A.\ Perotti]{ORCID: 0000-0002-4312-9504}
\email{alessandro.perotti@unitn.it}
\thanks{The second author is a member the INdAM Research group GNSAGA and was partially supported by the grant ``Progetto di Ricerca INdAM, Teoria delle funzioni ipercomplesse e applicazioni''}

\begin{abstract}
We characterise 
slice-regularity of functions over 
a real alternative *-algebra using operators that arise in Dunkl operator theory. 
We present a unifying perspective on hypercomplex analysis by defining a family of function spaces in the kernel of Dunkl-Cauchy-Riemann operators. 
Each of these function spaces, whose elements are called Dunkl-regular functions, refines Dunkl monogenic function theory and Dunkl harmonic analysis on Euclidean spaces. 
This approach allows a wide variety of hypercomplex function theories to be embedded as subcases of Dunkl monogenic function theory. This paves the way for further interactions between 
Dunkl theory and hypercomplex analysis.
\end{abstract}

\keywords{Dunkl operators, Dirac operator, Functions of a hypercomplex variable, Monogenic functions, Slice-regular functions}
\subjclass[2020]{Primary 30G35; Secondary 33C52  
}

\maketitle

\section{Introduction}

Dunkl operators are differential-difference operators associated with finite reflection groups. Introduced in \cite{Dunkl}, these operators appear in many areas: for example, they are used in harmonic analysis and in the study of multivariate special functions associated with root systems. We refer the reader to the monographs \cite{DunklXu} and \cite{Rosler} for wide accounts of 
Dunkl operators theory and its applications.

In 2006 \cite{Ren_et_al}, a generalization of Clifford analysis was introduced by means of Dunkl operators. Given a finite reflection group for $\R^n$ and a root system, let $T_1,\ldots,T_n$ be the corresponding Dunkl operators. Let $\rr_n$ denote the real Clifford algebra $Cl(0,n)$ with signature $(0,n)$ and basis $e_1,\ldots,e_n$. Consider the euclidean space $\rr^n$ embedded in the paravector space $M=\langle 1,e_1,\ldots,e_n\rangle$ of $\rr_n$ as the subspace generated by $e_1,\ldots,e_n$. The Dunkl-Dirac operator $\underline D$ on $\R_n$ is defined as $\underline D=\sum_{i=1}^ne_iT_i$. When all the multiplicities of the Dunkl operators are zero, Dunkl-Dirac analysis reduces to standard Clifford analysis. 

Clifford analysis is one of the higher dimensional generalizations of complex analysis, besides the theory of several complex variables, that have been developed in the last decades 
(see the monographs \cite{BDS,GHS} for extended accounts of this theory). 
Another higher dimensional function theory, including also polynomials and power series, was developed over the last two decades. 
The theory of slice-regular functions, also called slice analysis, was introduced in the quaternionic setting \cite{GeSt2006CR,GeSt2007Adv} and then extended to Clifford algebras, octonions,  and more generally to any real alternative *-algebra \cite{CoSaSt2009Israel,GeStRocky,AIM2011}. See \cite{GeStoSt2013,Struppa2015Algebras} for reviews and extended references of this function theory. 

Dunkl-Dirac analysis and slice-regular function theory were developed independently. Recently, the paper \cite{binosi2025dunklapproachsliceregular}  provided a link between these two theories giving a characterization of slice and slice-regular functions based on the spherical Dunkl-Dirac operator and on the  Dunkl-Cauchy-Riemann operator $D=\dd{}{x_0}+\underline D$ under an appropriate choice of the multiplicity function of the Dunkl operators. This result allows to view two different function theories, the one of slice-regular functions and Clifford analysis, as subcases of the general function theory of Dunkl monogenic functions, namely the functions in the kernel of $D$.

The first purpose of the present work is to extend results of \cite{binosi2025dunklapproachsliceregular} in two directions. The first one is to provide characterizations of sliceness and slice-regularity over any hypercomplex subspace $M$ of a real alternative *-algebra $\Aa$ by means of operators arising in Dunkl theory.  In Theorem \ref{teo:poly_sliceness} we generalize, with a simpler proof, the result for polynomial functions obtained in \cite[Proposition 3.3]{binosi2025dunklapproachsliceregular}. In Theorem \ref{teo:poly_slice_regularity} we characterize polynomial slice-regularity over an alternative algebra. 
The second direction is to enlarge the class of functions to which these characterizations apply, from polynomials to any function of class $C^1$ (Theorems \ref{teo:C1_sliceness} and \ref{teo:slice_regularity}). 
This extension is more difficult to achieve and requires a stricter assumption on Dunkl multiplicities. The proof of Theorem \ref{teo:C1_sliceness} is based on the spectral properties of non-negative matrices as established by the Perron-Frobenius Theorem.
The conditions provided in \cite{binosi2025dunklapproachsliceregular} were based on Dunkl operators defined by any finite reflection group $G$ acting on $\rr^n$. Here we simplify the approach restricting the choice of $G$ to the abelian group $\zz_2^n$. The $\zz_2^n$ Dunkl-Dirac operator has been studied extensively in \cite{DeBieGenestVinet}.

A second purpose of the present work is to define, on any hypercomplex subspace of the alternative algebra $\Aa$, a family of function spaces in the kernel of a Dunkl-Cauchy-Riemann operator $D_\PP$. 
If the hypercomplex subspace $M$ has dimension $n+1$, every space is associated in a one-to-one correspondence to a partition $\PP$ of the set $[n]=\{1,\ldots,n\}$ (see Definition \ref{def:P-Dunkl-regular}). Each of these function spaces, whose elements are called \emph{Dunkl-regular functions}, provides a refinement of Dunkl monogenic function theory on $M$ and then of Dunkl harmonic analysis on $\rr^{n+1}$. 

This approach permits to embed a large family of hypercomplex function theories existing in the literature as subcases of the theory of Dunkl monogenic functions, thus providing another unifying perspective to hypercomplex analysis.
This family of function theories includes monogenic and slice-regular functions on any hypercomplex subspace $M$ of $\Aa$ (e.g.\ monogenic functions and slice-monogenic functions on the paravector space of a Clifford algebra, quaternionic Fueter-regular and slice-regular functions, functions in the kernel of the Moisil-Teodorescu operator on the reduced quaternions \cite{MT1931}, octonionic monogenic functions and octonionic slice-regular functions), but also Clifford axially monogenic functions, octonionic slice Fueter-regular functions \cite{JinRenSabadini2020,SliceFueterRegular}, the recently defined generalized partial-slice monogenic functions of type $(p,n-p)$ on $\rr_n$ \cite{Sabadini_Xu_TAMS} and on octonions \cite{Sabadini_Xu_Octo_TAMS} and the wider class of $T$-regular functions on a hypercomplex subspace of $\Aa$ \cite[\S3]{GhiloniStoppatoJGP},  \cite{GhiloniStoppato_arXiv24}. 

All these functions are Dunkl monogenic (and then Dunkl harmonic) with respect to a Dunkl-Dirac operator $\DD_\PP$ with appropriate multiplicities. The choice of these multiplicities depends only on the partition $\PP$. 

The definition of $\DD_\PP$ uses the intermediate $\zz_2^n$ Dunkl-Dirac operators $\DD_A$, that are defined for every subset $A$ of $[n]$ (see Definition \ref{def:DiracDunklA}). They were introduced in \cite{DeBieGenestVinet} on Clifford algebras. Each of these operators, together with the left multiplication operator by the reduced variable $\x_A$, provides a realization of the orthosymplectic Lie superalgebra $\mathfrak{osp}(1|2)$  (see \cite[Proposition 1]{DeBieGenestVinet} for the Clifford algebra case and Proposition \ref{pro:orstedA} for the general case).

Dunkl multiplicities $k_1,\ldots,k_n$ are often assumed to be positive in the literature, as in \cite{DeBieGenestVinet}, or at least non-negative. In our approach this is not possible, since the operators $D_\PP$ require that the sum of the $k_i$'s is always negative when $n>1$.
However, it is still possible to assume that the multiplicities belong to the regular set $K^{reg}$ for the reflection group $\zz_2^n$ (Corollary \ref{cor:Kreg} and Remark \ref{rem:Kreg}), so to avoid singularities of the Dunkl intertwining operators (see e.g.\ \cite[\S2.4]{Rosler}).  

The number of Dunkl-regular functions spaces on a $(n+1)$-dimensional hypercomplex subspace $M$ is equal to the number of partitions of the set $[n]$ (Theorem \ref{teo:classification1}). However, many of these space are equivalent (Theorem \ref{teo:classification2}). It turns out that the number of non-equivalent spaces is equal to the number of partitions of the number $n$. For example, there is only one Dunkl-regular function space when $M=\cc$ (the holomorphic functions), three non-equivalent function spaces on the quaternions, with $M=\hh$, and 15 non-equivalent spaces on the octonions, with $M=\oo$ (see Table \eqref{table}).

We describe in more detail the structure of the paper. Section \ref{sec:pre} is devoted to preliminaries. We recall some basic definitions
about real alternative *-algebras and hypercomplex subspaces, monogenic and slice functions,  slice-regularity. 
We introduce the three fundamental differential operators on a hypercomplex subspace $M$, i.e., the Cauchy-Riemann operator $\dM$, the global operator $\difbarM$ associated to slice-regularity and the spherical Dirac operator $\Gamma_M$. 
We show that $\difbarM$ provides the hypercomplex basis expression of slice-regular polynomials on $M$ (Proposition \ref{pro:sliceregularpoly}).
We also prove 
the basic relation between the there operators $\dM$, $\difbarM$ and $\Gamma_M$ (Theorem \ref{teo:difference}). 

Section \ref{sec:Dunkl-Dirac_operators} concerns Dunkl-Dirac operators and the characterizations of sliceness and slice-regularity. Subsection \ref{sub:Dunkl-Dirac_operators} introduces Dunkl operators associated to the reflection group $\zz_2^n$ and the Dunkl-Dirac operator $\DD_\B$ associated to a hypercomplex basis $\B$ of $M$.  Following \cite{DeBieGenestVinet}, the (super)Casimir operator $\sC$ for the $\mathfrak{osp}(1|2)$ realization given by the operators $\x=\IM(x)$  and $\DD_\B$ is defined. Subsection \ref{sub:dirac_dunkl_operators_and_sliceness} gives necessary and sufficient conditions for sliceness related to the Casimir operator and to the spherical Dunkl-Dirac operator $\tGamma$. We begin with a criterion for polynomial functions (Theorem \ref{teo:poly_sliceness}) and we generalize it to any $C^1$ function by means of a supplementary pairs of Dunkl operators (Theorem \ref{teo:C1_sliceness}). Subsection \ref{sub:dirac_dunkl_monogenicity_and_slice_regularity} is devoted to characterizations of slice-regularity in the context of Dunkl theory (Theorems \ref{teo:poly_slice_regularity} and \ref{teo:slice_regularity}). 

Section \ref{sec:dunkl_regular_function_spaces} introduces Dunkl-regular function spaces. Subsection \ref{sub:Intermediate_Dunkl-Dirac_operators} defines, for any subset $A$ of $[n]$, the intermediate Dunkl-Dirac operator $D_A$, the `spherical' operator $\mathscr S_A$ and the function space $\F_{A}(\OO)$ of \emph{$A$-Dunkl-regular functions} on an open subset $\OO$ of $M$. 
Subsection \ref{sub:examples} classifies the $A$-Dunkl-Dirac functions spaces when the hypercomplex subspace $M$ is the whole alternative algebra of quaternions, the subspace of reduced quaternions, the paravector space of a Clifford algebra or the space of octonions. 
Subsection \ref{sub:Slice_Dunkl-regular_functions} consider the case of Dunkl-Dirac functions that are also slice functions on $M$. 
Subsection \ref{sub:Dunkl-regular_function_spaces_defined_by_partitions} introduces, for any open subset $\OO$ of $M$ and any partition $\PP$ of the set $[n]$, where $n=\dim M-1$, a space $\F_\PP(\OO)$ of functions in the kernel of a Dunkl-Cauchy-Riemann operator $D_\PP$, called \emph{$\PP$-Dunkl-regular functions}.  
Theorem \ref{teo:classification1} gives the dependence of the spaces $\F_\PP(\OO)$ on $\PP$ and on the Dunkl multiplicities, while Theorem \ref{teo:classification2} studies equivalence of Dunkl-regular function spaces up to reordering of the hypercomplex basis and the partition. 
Subsection \ref{sub:Dunkl-regularity_and_regular_$T$-functions} studies the relation between Dunkl-regularity and $T$-regular functions, showing (Proposition \ref{pro:Tregular}) that for any $T$ there is a choice of $\PP$ such that $\F_\PP(\OO)$ is the set of $T$-regular functions on $\OO$. In particular, any $T$-regular function is Dunkl monogenic. We also introduce the concept of $\PP$-slice function on $M$ (Definition \ref{def:pslice}) and its characterization (Theorem \ref{teo:kerSP}) through Dunkl operators.

\section{Preliminaries}\label{sec:pre}

Let $\Aa$ be a finite-dimensional real alternative algebra with unity $1\ne0$. Alternativity means that the associator $[a,b,c]:=(ab)c-a(bc)$ is an alternating function of $a,b,c\in\Aa$.
We identify the real multiples of $1$ in $\Aa$ with the  real numbers. 
Assume that $\Aa$ is a *-algebra, i.e., it is equipped with a real linear anti-involution $x\mapsto x^c$ such that $(xy)^c=y^cx^c$ for all $x,y\in \Aa$ and  $x^c=x$ for $x$ real. Let $t(x):=x+x^c\in \Aa$ be the \emph{trace} of $x$ and $n(x):=xx^c\in \Aa$  the \emph{norm} of $x$. 
Let
\[
\s_{\Aa}:=\{J\in \Aa : t(x)=0,\ n(x)=1\}
\]
be the set of imaginary units of $\Aa$ compatible with the *-algebra structure of $\Aa$. Note that if $J\in\s_{\Aa}$, then $J^2=-JJ^c=-1$. Assuming $\s_{\Aa}\ne\emptyset$, one can define the \emph{quadratic cone} of $\Aa$ (see \cite[Definition 3]{AIM2011}) as the subset of $\Aa$
\[
\Q_{\Aa}:=\cup_{J\in \s_{\Aa}}\C_J,
\]
where $\C_J=\Span(1,J)$ is the complex `slice' of $\Aa$ generated by $1$ and $J$. It holds $\C_J\cap\C_K=\R$ for each $J,K\in\s_{\Aa}$ with $J\ne\pm K$. The quadratic cone is a real cone invariant w.r.t.\ translations along the real axis. 
Each element $x$ of $\Q_{\Aa}$ can be written as $x=\RE(x)+\IM(x)$, with $\RE(x)=\frac{x+x^c}2$, $\IM(x)=\frac{x-x^c}2=\beta J$, where $\beta=\sqrt{n(\IM(x))}\geq0$ and $J\in\s_{\Aa}$, with unique choice of $\beta\geq0$ and $J\in\s_{\Aa}$ if $x\not\in\R$.
In the following we will also write $\x$ in place of $\IM(x)$. 
Observe that the quadratic cone is properly contained in $\Aa$ unless $\Aa$ is isomorphic as a real *-algebra to one of the division algebras $\C,\HH,\mathbb O$ with the standard conjugations (see \cite[Proposition 1]{AIM2011}).
We refer to 
\cite[\S2]{AIM2011} and \cite[\S1]{AlgebraSliceFunctions} for more details and examples about real alternative *-algebras and their quadratic cones.

\subsection{Hypercomplex subspaces}\label{sec:Hypercomplex_subspaces}

We recall some concepts defined in \cite[\S3]{CRoperators}.
A real vector subspace $M$ of $\Aa$ with dimension $\dim(M)\ge2$ and such that $\R \subseteq M \subseteq \Q_{\Aa}$ 
is called a \emph{hypercomplex subspace of $\Aa$}. 

If $\Aa$ is the skew-field $\hh$ of quaternions or the space $\oo$ of octonions, then $\Aa$ itself is a hypercomplex subspace. The algebra $\hh$ contains also the hypercomplex subspace  $\hh_r=\{x=x_0+ix_1+jx_2\in\hh\;|\; x_0,x_1,x_2\in\R\}$ of \emph{reduced quaternions}. More generally, the space $M=\rr^{n+1}$ of \emph{paravectors} in the real Clifford algebra $\R_n$ of signature $(0,n)$ is a hypercomplex subspace of $\Rn$. 
Observe that a hypercomplex subspace $M$ coincides with the whole algebra $\Aa$ if and only if $M$ is $\cc$, $\hh$ or $\oo$ \cite[Proposition 1]{AIM2011}. 

\begin{lemma}\cite[Lemma 1.4]{VolumeCauchy} \label{lem:gis}
Let $M$ be a hypercomplex subspace of $\Aa$. Then there exists a norm $\|\ \|$ on $\Aa$ such that $\|x\|^2=n(x)$ for every $x \in M$. Let $\langle\ ,\,\rangle$ be the scalar product  on $\Aa$ associated to the norm $\|\ \|$. Then it  holds $\langle x,y\rangle=\frac12 t(xy^c)$ for every $x,y\in M$. 
\end{lemma}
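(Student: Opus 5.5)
Since the statement only involves vectors of $M$ and the functions $t$ and $n$, the plan is to first build the scalar product on $M$ and then extend it to all of $\Aa$ by an arbitrary complement. On $M$ we set $B(x,y):=\frac12 t(xy^c)$. The first task is to check that $B$ is a real symmetric bilinear form on $M$ with values in $\R$.

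Bilinearity is clear. For symmetry, $t(a)=a+a^c$ is invariant under $c$, and $(xy^c)^c=yx^c$; hence $t(xy^c)=t(yx^c)$. For real values, fix $x,y\in M$. Then $x+y\in M\subseteq\Q_\Aa$, and every element $z=\alpha+\beta J$ of the quadratic cone has $n(z)=zz^c=(\alpha+\beta J)(\alpha-\beta J)=\alpha^2+\beta^2\in\R$. Polarization gives
\[
n(x+y)=(x+y)(x^c+y^c)=n(x)+n(y)+xy^c+yx^c=n(x)+n(y)+t(xy^c),
\]
so $t(xy^c)=n(x+y)-n(x)-n(y)\in\R$.

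Next we check positive definiteness. We have $B(x,x)=\frac12 t(xx^c)=\frac12\cdot 2n(x)=n(x)$, because $n(x)$ is real and $c$-fixed. For $x=\alpha+\beta J\in\Q_\Aa$ this equals $\alpha^2+\beta^2\ge0$, with equality only when $x=0$. So $B$ is an inner product on $M$, and $\|x\|^2=n(x)$ there.

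To extend to $\Aa$, choose any linear complement $W$ with $\Aa=M\oplus W$ and any inner product $\langle\,,\rangle_W$ on $W$. Define
\[
\langle m+w,m'+w'\rangle:=B(m,m')+\langle w,w'\rangle_W .
\]
This is an inner product on $\Aa$ whose restriction to $M$ is $B$. Its induced norm therefore satisfies $\|x\|^2=n(x)$ on $M$, and $\langle x,y\rangle=\frac12 t(xy^c)$ for $x,y\in M$. The only delicate point is the realness of $t(xy^c)$, and this is handled by the polarization identity together with $M+M\subseteq M\subseteq\Q_\Aa$.
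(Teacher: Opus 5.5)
Your proof is correct: it obtains $t(xy^c)\in\R$ from $t(xy^c)=n(x+y)-n(x)-n(y)$ with $x+y\in M\subseteq\Q_\Aa$, gets positivity from $n(\alpha+\beta J)=\alpha^2+\beta^2$, and extends to $\Aa$ through an arbitrary inner product on a complement of $M$, and that is all the lemma requires. The paper does not reprove the lemma but quotes it from \cite[Lemma 1.4]{VolumeCauchy}, and your argument is the natural one for it; your polarization identity also shows that $\langle x,y\rangle=\frac12 t(xy^c)$ on $M$ holds for \emph{any} Euclidean norm on $\Aa$ with $\|x\|^2=n(x)$ on $M$, not only for the one you construct.
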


Let $\B=(v_0,v_1,\ldots,v_{n})$ be a real vector basis of $M$ with $v_0=1$, orthonormal w.r.t.\ the scalar product $\langle\ ,\,\rangle$. Complete $\B$ to a real vector basis $\B_{\Aa}=(v_0,v_1,\ldots,v_{d-1})$ of $\Aa$, orthonormal w.r.t.\ the scalar product $\langle\ ,\,\rangle$ on $\Aa$.  
Then it holds $\{v_1,\ldots,v_n\}\subseteq \s_M:=\s_{\Aa}\cap M$ and $M$ is the orthogonal direct sum of $\R=\Span(v_0)$ and $M \cap \ker(t)=\Span(v_1,\ldots,v_n)$. 
Moreover, for every $i\ne j$ in the set $\{1,\ldots,n\}$, the elements $v_i$ and $v_j$ anticommute  (see \cite[\S3]{CRoperators}). Such a basis $\B$ is called, as in \cite{GhiloniStoppatoJGP}, a \emph{hypercomplex basis} of $M$. Lemma \ref{lem:gis} shows that every hypercomplex subspace has hypercomplex bases. Note also that it holds $\x^2=-n(x)=-\|x\|^2$ for every $x\in M$. 
From the equalities $0=[v_i,v_j,a]+[v_j,v_i,a]=(v_iv_j)a-v_i(v_ja)+(v_jv_i)a-v_j(v_ia)=-v_i(v_ja)-v_j(v_ia)$,
it follows the useful identity
\begin{equation}\label{eq:anticommute}
v_i(v_j a)=-v_j(v_ia)\text{\quad for every $a\in \Aa$ and $i, j\in\{1,\ldots,n\},\ i\ne j$.}  
\end{equation}

Let $L:\R^d \to \Aa$ be the real vector isomorphism defined by $\B_{\Aa}$, namely the map $x=(x_0,x_1,\ldots,x_{d-1})\mapsto L(x)=\sum_{\ell=0}^{d-1}x_{\ell}v_{\ell}$. Identify $\R^d$ with $\Aa$ via~$L$ and $M$ with $\R^{n+1}\simeq\R^{n+1} \times \{0\} \subseteq 
\R^d$. The product of $\Aa$ induces a product on $\R^d$: given $x,y \in \R^d$, $xy$ is defined as $L^{-1}(L(x)L(y))$. Since $\B_{\Aa}$ is orthonormal, $\|x\|$ coincides with the Euclidean norm $(\sum_{\ell=0}^{d-1}x_{\ell}^2)^{1/2}$ of $x$ in $\R^d$. Moreover,
\[
\s_M=\s_{\Aa}\cap M=\{L(x)\in M\,|\,x\in\R^{n+1},\,x_0=0,\, \textstyle\sum_{i=1}^nx_i^2=1\}
\]
is compact.

\subsection{Cauchy-Riemann operators on hypercomplex subspaces}\label{sec:CR_operators}

We recall from \cite[Def.\ 2]{CRoperators} the following definition. See also \cite[\S2.3]{GhiloniStoppato_arXiv24}. 
Let $\OO$ be an open subset of the hypercomplex subspace $M$ with hypercomplex basis $\B$ and coordinates $x_0,x_1,\ldots,x_n$. 
For $i=0,1,\ldots,n$ consider the differential operators $\partial_{x_i}:C^1(\OO,\Aa)\to C^0(\OO,\Aa)$ defined by 
\[
\textstyle\partial_{x_i} f=L\circ\dd {(L^{-1}\circ f\circ L_{|\OO'})}{x_i}\circ L^{-1},
\]
where $\OO'=L^{-1}(\OO)$. As noted in \cite[Remark 2.37]{GhiloniStoppato_arXiv24}, the operators $\partial_{x_i}$ depend only on the hypercomplex basis $\B$ and not on the choice of its extension $\B_{\Aa}$. Observe that $\partial_{x_0}$ does not even depend on $\B$ since $v_0=1$ for every hypercomplex basis. 

\begin{definition}\label{def:db}
Fix a hypercomplex basis  $\B=(1,v_1,\ldots,v_{n})$ of $M$. The \emph{Cauchy-Riemann operator induced by $\B$} is the differential operator $\dB:C^1(\OO,\Aa)\to C^0(\OO,\Aa)$ defined by
\[
\dB:=\partial_{x_0}+v_1\partial_{x_1}+\cdots +v_n\partial_{x_n},
\]
\end{definition}

Sometimes in the literature a factor $1/2$ is present in front of the operator (as in \cite{CRoperators}). 

\begin{remark}
The operator $\dB$ does not depend on the choice of a hypercomplex basis $\B$ of $M$. This property generalizes a well-known fact about the Dirac operator on Clifford algebras and the Fueter-Moisil operator \cite{DentoniSce}  on the space $\oo$. 
If $\B'=(1,v'_1,\ldots,v'_{n})$ is another hypercomplex basis of $M$, then there exists a real orthogonal matrix $P\in O(n)$ such that $v_j=\sum_{i=1}^np_{ij}v'_i$ for every $j=1,\ldots,n$. If $L'$ is the isomorphism defined by $\B'_{\Aa}$, namely $L'(y_0,\ldots,y_{d-1})=\sum_{\ell=0}^{d-1}y_{\ell}v'_{\ell}$, then it holds $x_j=\sum_{i=1}^n p_{ij}y_i$ for $j=1,\ldots,n$. Therefore
\begin{align*}
\dB f&=\partial_{x_0} f+\textstyle\sum_{j=1}^nv_j\,\partial_{x_j} f=\partial_{x_0} f+\textstyle\sum_{i,j,h=1}^n p_{ij}v'_i p_{hj}\,\partial_{y_h} f=
\partial_{x_0} f+\textstyle\sum_{i=1}^n v'_i\,\partial_{y_i} f=\overline\partial_{\B'}f.
\end{align*}
\end{remark}

\begin{definition}\label{def:dM}
We define the \emph{Cauchy-Riemann operator of $M$} as $\dM:=\dB$ for any choice of the hypercomplex basis $\B$ of $M$. 
\end{definition}

\begin{examples}
The Cauchy-Riemann-Fueter operator on $\hh$, the Cauchy-Riemann (or Fueter-Moisil \cite{DentoniSce}) operator on the octonionic space $\oo$ and the Cauchy-Riemann operator defined on paravectors of the Clifford algebra $\R_n=\R_{0,n}$, are all examples of operators $\dM$ (with $M=\hh,\oo,\R^{n+1}$ respectively).
\end{examples}

Let $\partial_{\B}:=\partial_{x_0}-v_1\partial_{x_1}-\cdots -v_n\partial_{x_n}$ denote the conjugated Cauchy-Riemann operator induced by $\B$ and let $\Delta_\B$ be the Laplacian operator on $M$ induced by $\B$, acting on functions $f$ of class $C^2(\OO,\Aa)$ as
\[\Delta_\B f:=\textstyle\sum_{i=0}^n\partial_{x_i}(\partial_{x_i}f)=L\circ\Delta(L^{-1}\circ f\circ L_{|\OO'})\circ L^{-1},
\]
where $\OO'=L^{-1}(\OO)$ and $\Delta$ is the Laplacian of $\R^{n+1}$. 
Proposition 5 in \cite{CRoperators} proved that $\partial_\B(\dB f)=\dB(\partial_\B f)=\Delta_\B f$ for every function $f$ of class $C^2(\OO,\Aa)$. 
As observed above for $\dB$, also $\partial_{\B}$  and then $\Delta_\B$ do not depend on the choice of $\B$. We can then define the \emph{conjugated Cauchy-Riemann operator of $M$} and the \emph{Laplacian operator of $M$} as
\[
\partial_M:=\partial_{\B}\text{\quad  and\quad}\Delta_M:=\Delta_\B.
\]
It follows immediately that  $\Delta_Mf=\partial_M(\dM f)=\dM(\partial_Mf)$ for every $f\in C^2(\OO,\Aa)$.

\begin{definition}\label{def:monogenic}
Given an open set $\OO\subseteq M$, the functions $f\in C^1(\OO,\Aa)$ in the kernel of $\dM$ are called (left) \emph{monogenic functions} on $\OO$. We write $f\in\M(\OO)$. Observe that every monogenic function $f\in C^2(\OO,\Aa)$ is \emph{harmonic}, i.e., in the kernel of $\Delta_M$. 
\end{definition}

\begin{remark}
Let $\OO\subseteq M$ be open. 
A function $f\in C^2(\OO,\Aa)$ is harmonic if and only if, for any hypercomplex basis $\B$, the function $L^{-1}\circ f\circ L_{|\OO'}$ is harmonic w.r.t.\ the standard Laplacian of $\R^{n+1}$. In particular, every monogenic function $f\in C^2(\OO,\Aa)$ is real analytic. If a monogenic function $f$ is of class $C^1(\OO,\Aa)$, then $f$ is still harmonic and real analytic. This follows from the mean value property (see \cite[Proposition 3.16]{GhiloniStoppato_arXiv24} for $\Aa$ associative and \cite[Theorem 3.12]{Huo_Ren_Xu_arXiv25} for the general case) satisfied by $f$ (and then by $L^{-1}\circ f\circ L_{|\OO'}$). 
See also \cite[p.255]{DentoniSce} about the harmonicity of a monogenic $f$ of class $C^1$ in the octonionic case $\Aa=M=\oo$. 
\end{remark}

For more properties of monogenic functions we refer to \cite[\S3]{GhiloniStoppato_arXiv24} and \cite{Huo_Ren_Xu_arXiv25}, where it is shown that several classical  results of Clifford monogenic function theory (see, e.g., \cite{GHS}) can be extended to monogenic functions on hypercomplex subspaces of associative algebras. 

\subsection{Slice functions and slice-regular functions}\label{sec:slice}

\emph{Slice functions} on $\Aa$ are functions that are compatible with the slice character of the quadratic cone. Let the elements of $\Aa\otimes_{\R}\C$ be written as $w=a+\ui b$ with $a,b\in \Aa$ and $\ui^2=-1$ and consider the conjugation mapping $w=a+\ui b$ to $\overline w=a-\ui b$  for all $a,b\in \Aa$. Given a subset $D$ of $\C$ invariant w.r.t.\ complex conjugation, a function $F: D \to \Aa\otimes_{\R}\C$ is a \emph{stem function} if it satisfies  $F(\overline z)=\overline{F(z)}$ for every $z\in D$. For every $J\in\s_{\Aa}$, let $\phi_J:\C\to\C_J$ be the *-algebra isomorphism 
\[
\phi_J(\alpha+i\beta):=\alpha+J\beta\text{\quad for all $\alpha,\beta\in\R$}.
\]
Let $\OO_D$ be the \emph{axially symmetric} (or \emph{circular} \cite{AIM2011}) subset of the quadratic cone defined by 
\[
\OO_D=\bigcup_{J\in\s_{\Aa}}\phi_J(D)=\{\alpha+J\beta\in\Aa : \alpha,\beta\in\R, \alpha+i\beta\in D,J\in\s_{\Aa}\}.
\]
An axially symmetric connected set $\OO_D$ is called a \emph{symmetric slice domain} if $\OO_D\cap \R\ne\emptyset$, a \emph{product domain} if $\OO_D\cap \R=\emptyset$. 
The stem function $F=F_\emptyset+\ui F_1:D \to \Aa\otimes_{\R}\C$  induces the \emph{(left) slice function} $f=\I(F):\OO_D \to \Aa$: if $x=\alpha+J\beta =\phi_J(z)\in \OO_D\cap \C_J$, then  
\[ f(x)=F_\emptyset(z)+JF_1(z),\text{\quad where $z=\alpha+i\beta\in D$}. 
\]

Suppose that $D$ is open.  The slice function $f=\I(F):\OO_D \to \Aa$ is called \emph{(left) slice-regular} if $F$ is holomorphic w.r.t.\ the complex structure on $\Aa\otimes_{\R}\C$ defined by left multiplication by $\ui$. 
We will denote by $\SL^1(\OO_D)$ the real vector space of slice functions induced by stem functions of class $\mathcal C^1$ on $\OO_D$ and by $\sr(\OO_D)$ the vector subspace of slice-regular functions on $\OO_D$. 
For example, polynomial functions $f(x)=\sum_{j=0}^d x^ja_j$ and convergent power series with right coefficients in $\Aa$ are slice-regular, while polynomials in $x,x^c$ of the form 
$\sum_{\alpha+\beta=k}x^\alpha (x^c)^\beta a_{\alpha,\beta}$, with coefficients $a_{\alpha,\beta}\in\Aa$, are slice functions.
If $\Aa=\hh$ and $\OO_D$ is a symmetric slice domain, this definition of slice-regularity is equivalent to the original one proposed by Gentili and Struppa in \cite{GeSt2007Adv}. 

If $M\subseteq\Q_{\Aa}$ is a hypercomplex subspace, we can consider the restriction of a slice function defined on $\OO_D$ to the subset $\OO=\OO_D\cap M$ of $M$. We call such a set $\OO$ an \emph{axially symmetric} open subset of $M$. 
Thanks to the representation formula (see e.g.~\cite[Proposition 6]{AIM2011}), the restriction of a slice function $f=\I(F)\in\SL(\OO_D)$ to $\OO$ uniquely determines $f$ and the inducing stem function $F$. We will use the same symbol $f$ to denote the restriction of $f$ to $\Omega$, and the symbols $\SL(\OO)$ and $\sr(\Omega)$ to denote respectively the set of (restricted) slice functions and the set of (restricted) slice-regular functions on $\OO$, where $\OO$ is an axially symmetric open subset of $M$.

To any function $f:\OO_D \to \Aa$, not necessarily slice, one can associate the function $\vs f:\OO_D \to \Aa$, called \emph{spherical value} of $f$, and the function $f'_s:\OO_D \setminus \R \to \Aa$, called  \emph{spherical derivative} of $f$, defined as
\begin{equation}\label{eq:sphericalfunctions}
\vs f(x):=\tfrac{1}{2}(f(x)+f(x^c))
\quad \text{and} \quad
f'_s(x):=\tfrac{1}{2}\,\x^{-1}(f(x)-f(x^c)),
\end{equation}
where $\x=\IM(x)$. The following equalities hold
\begin{equation}\label{eq:spherical}
f=\vs f+\x f'_s=\vs f+\x ^{-1}\vs{(\x f)}.
\end{equation}
If $f:\OO_D\to\Aa$ is a slice function, 
then the functions $\vs f$ and $\sd f$ are constant on every set $\s_x:=\alpha+\beta\,\s_{\Aa}$, $x=\alpha+I\beta\in\OO_D\setminus\rr$. Since also $\x f$ is slice, also 
the function $\vs{(\x f)}$ is constant on the sets $\s_x$, with $x\in\OO_D\setminus\rr$. Moreover, 
\begin{equation}\label{eq:F01}
F_\emptyset(z)=\vs f(x),\quad F_1(z)=\beta\sd f(x)=-\beta^{-1}\vs{(\x f)}(x).  
\end{equation}

\begin{remark}\label{rem:sliceness_continuity_M}
If $M\subseteq\Q_{\Aa}$ is a hypercomplex subspace and $\OO=\OO_D\cap M$ is axially symmetric, then 
for any functions $f\in\SL(\OO)$ the functions $\vs f$, $\sd f$ and $\vs{(\x f)}$ defined as in \eqref{eq:sphericalfunctions} are constant on every sphere $\s_x\cap M=\alpha+\beta\,\s_M$, with $x=\alpha+I\beta\in\OO\setminus\rr$ and $\s_M=\s_{\Aa}\cap M$. 
\end{remark}

\begin{proposition}\label{pro:sliceness_continuity_M}
If $f$ is continuous on $\OO\subseteq M$, then $f$ is a slice function on $\OO$ if and only if it is a slice function on $\OO\setminus\rr$. 
Therefore $f\in\SL(\OO)\cap C(\OO,\Aa)$ if and only if the functions $\vs f$ and $\vs{(\x f)}$ (or $\vs{(xf)}$) are constant on every sphere $\s_x\cap M=\alpha+\beta\,\s_M$, with $x=\alpha+I\beta\in\OO\setminus\rr$. 
\end{proposition}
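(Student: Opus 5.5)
The plan is to prove the first equivalence by extending the stem function across the real axis, and then to read off the second statement from the spherical functions \eqref{eq:sphericalfunctions} and formula \eqref{eq:F01}.

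\emph{The direction $f$ slice on $\OO$ $\Rightarrow$ $f$ slice on $\OO\setminus\rr$.} This is immediate. Restrict the stem function $F$ to $D\setminus\rr$; the set $\OO\setminus\rr$ is still axially symmetric, and the restricted $F$ induces $f$ there.

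\emph{The converse direction.} Let $F=F_\emptyset+\ui F_1$ on $D\setminus\rr$ induce $f|_{\OO\setminus\rr}$. I extend $F$ to $D\cap\rr$ by $F_\emptyset(\alpha):=f(\alpha)$ and $F_1(\alpha):=0$. The extended function is still a stem function, because $\overline{z}=z$ for real $z$ and $F(\alpha)=f(\alpha)\in\Aa$ has no $\ui$-component. It then remains to check that $f(x)=F_\emptyset(z)+JF_1(z)$ at real points, which holds by construction. Thus $f=\I(F)$ on $\OO$, and no continuity of $F$ is needed for the bare slice property.

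If one also wants $F$ continuous, one uses \eqref{eq:F01}: $F_\emptyset(z)=\tfrac12(f(x)+f(x^c))$ tends to $f(\alpha)$ as $\beta\to0$ by continuity of $f$. Likewise $F_1(z)=-\beta^{-1}\vs{(\x f)}(x)$, and $\vs{(\x f)}(x)=\tfrac12\x\,(f(x)-f(x^c))$ because $\x^c=-\x$. Hence $F_1(z)=-\tfrac12 J(f(x)-f(x^c))\to0$, since $J\in\s_M$ is bounded ($\s_M$ is compact).

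\emph{The "therefore" part.} Necessity is exactly Remark \ref{rem:sliceness_continuity_M}. For sufficiency, by the first part it suffices to work on $\OO\setminus\rr$. There I define
\[
F_\emptyset(z):=\vs f(x),\qquad F_1(z):=-\beta^{-1}\vs{(\x f)}(x)
\]
for any $x=\alpha+J\beta$ with $\beta>0$. These are well defined precisely because $\vs f$ and $\vs{(\x f)}$ are constant on $\alpha+\beta\,\s_M$. For $\beta<0$ I set $F(\overline z):=\overline{F(z)}$, which makes $F$ a stem function.

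I then verify that $f=\I(F)$ using \eqref{eq:spherical}, $f=\vs f+\x^{-1}\vs{(\x f)}$. With $\x=\beta J$ we have $\x^{-1}=-\beta^{-1}J$, and so
\[
F_\emptyset+JF_1=\vs f-\beta^{-1}J\,\vs{(\x f)}=f(x).
\]

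The variant with $\vs{(xf)}$ follows from the identity $\vs{(xf)}=\alpha\vs f+\vs{(\x f)}$, which holds because $x=\alpha+\x$ and $\alpha$ is real, hence central. So under constancy of $\vs f$, constancy of $\vs{(xf)}$ is equivalent to constancy of $\vs{(\x f)}$.

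\emph{Main obstacle.} The only delicate point is working in a nonassociative algebra. In particular one must justify \eqref{eq:spherical}, i.e. that $\x^{-1}(\x a)=a$. This follows from alternativity: the subalgebra generated by $\x$ and $a$ is associative by Artin's theorem. Alternatively, the identity can be taken as already stated in the paper.
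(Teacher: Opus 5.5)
Your proof is correct and follows essentially the same route as the paper. You extend the stem function across the real axis by $F_\emptyset(\alpha):=f(\alpha)$, $F_1(\alpha):=0$, use continuity of $f$ to control the limit $\beta\to0$, and for the second part build $F$ from $\vs f$ and $\vs{(\x f)}$ via \eqref{eq:F01}, noting $\vs{(xf)}=x_0\vs f+\vs{(\x f)}$. Separating the purely formal slice property (which indeed needs no continuity) from the continuity of $F$, and using the explicit formula $F_1(z)=-\tfrac12 J\,(f(x)-f(x^c))$ instead of the paper's estimate with the constant $\omega$, are only cosmetic differences; read with $J$ fixed, your formula even gives continuity of $F$ on all of $D$ at once.
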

\begin{proof}
We can assume that $\OO\cap\rr\not=\emptyset$. 
If $x=\alpha+I\beta\in\OO=\OO_D\cap M$ and $f=\I(F_\emptyset+\ui F_1)$ on $\OO\setminus\rr$, then
\[
\|F_1(z)\|=\|-I(f(x)-F_\emptyset(z))\|\le\omega\|f(x)-F_\emptyset(z)\|\le\omega(\|f(x)-f(\alpha)\|+\|f(\alpha)-F_\emptyset(z)\|)\to0
\]
as $z$ tends to $\alpha\in D\cap\R$, where $\omega$ is a positive constant such that $\|xa\|\le\omega\|x\|\|a\|$ for every $x\in M$, $a\in\Aa$, as in \cite[Remark\ 2.27]{GhiloniStoppato_arXiv24}. Setting $F_\emptyset(\alpha):=f(\alpha)$ and $F_1(\alpha):=0$, one obtains a continuous stem function on $D$ which induces $f$ on $\OO$.
The last statement follows from \eqref{eq:F01}: if $\vs f$ and $\vs{(\x f)}$ are constant on every sphere $\s_x$, with $x\in\OO\setminus\rr$, then setting
\[
F_\emptyset(z):=\vs f(x),\quad F_1(z):=-\beta^{-1}\vs{(\x f)}(x)
\]
we get a stem function on $\OO\setminus\rr$ that induces $f$, and then $f\in\SL(\OO)\cap C(\OO,\Aa)$.  Finally, observe that $\vs{(xf)}=x_0\vs f+\vs{(\x f)}$.
\end{proof}

Observe that when the hypercomplex subspace ha dimension 2, then any function $f:\OO\to\Aa$ is a slice function, since in this case $\alpha+\beta \s_M=\{x,x^c\}$ and $\vs f$, $\vs{(\x f)}$ are invariant w.r.t.\ conjugation $x\mapsto x^c$. 

The \emph{slice derivatives} $\dd{f}{x},\dd{f\;}{x^c}$ of a $C^1$ slice function $f=\I(F)$ are defined by means of the  Cauchy-Riemann operators applied to the inducing stem function $F$:
\[\dd{f}{x}=\I\left(\dd{F}{z}\right),\quad \dd{f\;}{x^c}=\I\left(\dd{F}{\overline z}\right).\]
It follows that $f$ is slice-regular if and only if $\dd{f\;}{x^c}=0$ and if $f$ is slice-regular on $\OO$ then $\dd{f}{x}$ is slice-regular as well. Moreover, the slice derivatives satisfy the Leibniz product rule w.r.t.\ the slice product. 
We refer the reader to \cite[\S3,4]{AIM2011} for more properties of slice functions and slice-regularity.

\subsection{The global operators $\difM$, $\difbarM$ and the spherical Dirac operator}

Let $\OO$ be an open subset of the hypercomplex subspace $M$ of $\Aa$. 
We recall from \cite{Gh_Pe_GlobDiff} and \cite{CRoperators} the definition of the global differential operators $\dif,\difbar: C^1(\OO\setminus \R,A) \to C^0(\OO\setminus \R, A)$ associated with the slice derivatives:  
\begin{equation}\label{eq:theta}
\dif=\partial_{x_0}+\x ^{-1}\Eu\text{\quad and\quad} \difbar=\partial_{x_0}-\x ^{-1}\Eu,
\end{equation}
where $\Eu=\sum_{i=1}^nx_i\partial_{x_i}$ is the Euler operator for $M\cap\ker(t)=\Span(v_1,\ldots,v_n)$. 
The operator $\Eu$ does not depend on the choice of the hypercomplex basis $\B$ and then also the operators $\dif,\difbar$ do not depend on  $\B$.

\begin{definition}\label{def:thetaM}
We define the operators $\difM$ and $\difbarM$ as $\difM:=\dif$, $\difbarM:=\difbar$ for any choice of the hypercomplex basis $\B$ of $M$. 
\end{definition}

Observe that for every slice function $f$ on $\OO$, it holds $\difM f=\dd{f}{x}$ and $\difbarM f =\dd f{x^c}$  on $\OO\setminus\R$ (see \cite[Theorem 2.2]{Gh_Pe_GlobDiff}). Therefore if $f\in\SL(\OO)$, $f\in\sr(\OO)$ if and only if $\difbarM f=0$ on $\OO\setminus\R$. 

The operator $\difbarM$ provides the general form of slice-regular polynomials with right coefficients in $\Aa$. 

\begin{proposition}\label{pro:sliceregularpoly}
Let $\B=(1,v_1,\ldots,v_n)$ be a hypercomplex basis of $M$, with coordinates $x_0,\ldots,x_n$. 
Let $f\in \Aa[x_0,\ldots,x_n]$ be a polynomial function with coefficients in $\Aa$. If $f$ is slice-regular, then $f$  has the form $f(x)=\sum_{j=0}^d x^ja_j$ for some $a_j\in\Aa$, where $x=x_0+\sum_{i=1}^nx_iv_i$.
\end{proposition}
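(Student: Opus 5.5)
We plan to reduce the statement to the one-variable holomorphic case via the stem function. Here $f$ is a polynomial in $x_0,\ldots,x_n$ and is slice-regular on $M$, so there is a holomorphic stem function $F=F_\emptyset+\ui F_1$ on $\C$ with $f=\I(F)$ on $M$. Fix $J=v_1$ and restrict $f$ to the slice $\C_{v_1}\cap M=\{x_0+x_1v_1\}$. By definition of the induced function, $f(\alpha+v_1\beta)=F_\emptyset(z)+v_1F_1(z)$ with $z=\alpha+i\beta$. Since $f$ is a polynomial in $(x_0,\ldots,x_n)$, this restriction is a polynomial in $(\alpha,\beta)$ with coefficients in $\Aa$. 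The same holds for the restriction at $-v_1$, i.e.\ for $f(\alpha-v_1\beta)=F_\emptyset(z)-v_1F_1(z)$, which is the value of the polynomial $f$ at the point with $x_1=-\beta$. Taking half-sum and half-difference, $F_\emptyset(z)$ and $v_1F_1(z)$ are polynomials in $(\alpha,\beta)$. Multiplying by $v_1^{-1}=-v_1$ and using alternativity (the subalgebra generated by $v_1$ and one element is associative, so $v_1(v_1a)=-a$), $F_1$ is also polynomial in $(\alpha,\beta)$ with $\Aa$-coefficients.

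Next, write $\Aa\otimes_\R\C\simeq\C^{d}$ componentwise, using the real basis $\B_\Aa$ and the complex structure given by left multiplication by $\ui$. Then $F$ is a holomorphic function whose real components are real polynomials in $\alpha,\beta$. A holomorphic function that is polynomial in $(\alpha,\beta)$ is a complex polynomial in $z$; this follows because all derivatives in $\bar z$ vanish, so by Taylor expansion only pure $z^k$ terms survive. Hence $F(z)=\sum_{j=0}^d z^j(b_j+\ui c_j)$ with $b_j,c_j\in\Aa$, where the powers $z^j$ act as complex scalars. The stem condition $F(\bar z)=\overline{F(z)}$ forces $c_j=0$: comparing coefficients, $\bar z^j(b_j+\ui c_j)=\bar z^j(b_j-\ui c_j)$, so $F(z)=\sum_j z^j a_j$ with $a_j:=b_j\in\Aa$.

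Finally, we recall that $\I(z^j a)$ equals the function $x\mapsto x^ja$. Indeed, writing $z^j=p_j+iq_j$, we get $\I(z^ja)(\alpha+J\beta)=p_ja+J(q_ja)=(p_j+Jq_j)a=(\alpha+J\beta)^ja$, using real-linearity and $\phi_J(z^j)=x^j$ (powers are well defined by power-associativity). By the uniqueness of the stem function (representation formula), $f(x)=\sum_j x^ja_j$ on all of $M$, with $x=x_0+\sum x_iv_i$. The main point requiring care is the first step, namely that the stem components inherit polynomiality; this is handled by the two-slice sum and difference above.
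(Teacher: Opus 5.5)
Your proof is correct, but it takes a different route from the paper. You argue directly from the definition of slice-regularity:
\begin{itemize}
\item the restriction of $f$ to the single slice $\C_{v_1}$ determines $F_\emptyset$ and $F_1$ as polynomials in $(\alpha,\beta)$;
\item holomorphy makes $F$ a polynomial in $z$;
\item the stem condition forces its coefficients into $\Aa$;
\item $\I(z^ja)=x^ja$ finishes the argument.
\end{itemize}

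The paper never uses stem functions. It applies the global operator $\difbarM=\partial_{x_0}-\x^{-1}\Eu$ and observes that the polynomial identity $\x\,\partial_{x_0}f-\Eu f=0$ splits into homogeneous components. For a $k$-homogeneous component $P$, it combines this with Euler's identity $(x_0\partial_{x_0}+\Eu)P=kP$ to obtain $kP=x\,\partial_{x_0}P$. Since $\partial_{x_0}$ commutes with $\difbarM$, iteration gives $k!\,P=x^k\partial_{x_0}^kP$, with $\partial_{x_0}^kP$ constant.

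The two routes trade off as follows. Yours is more elementary and self-contained, needing only the definition and one-variable complex analysis, but it uses the sliceness of $f$ essentially, to reconstruct $F$ from one slice. The paper's route relies on the cited fact that $\difbarM f=\frac{\partial f}{\partial x^c}$ for slice functions. After that, however, it uses only $\difbarM f=0$, so it actually shows that every polynomial in $\ker\difbarM$, slice or not, is of the form $\sum_j x^ja_j$. It also fits the operator-theoretic framework used in the rest of the paper.

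Two small remarks on your write-up. First, if $f$ is slice-regular only on an axially symmetric open set $\OO=\OO_D\cap M$, everything works with $D$ in place of $\C$, since polynomial identities on open sets extend. Second, in the last step linearity of $\I$ already gives $f=\I(F)=\sum_j x^ja_j$, so you do not need to invoke uniqueness of the stem function.
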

\begin{proof}
Firstly, decompose $f$ in its homogeneous components $P_i$ of degree $i$
\[
f=P_d+P_{d-1}+\cdots+P_1+P_0.
\]
 Since $f$ is slice-regular, it holds 
\[
0=\difbar f=\partial_{x_0}f-\x^{-1}\Eu f=\sum_{i=0}^d\partial_{x_0}P_i-\x^{-1}\sum_{i=0}^d\Eu P_i 
\]
where $\x=\IM(x)$. Equivalently, $\sum_{i=0}^d\left(\x\partial_{x_0}P_i-\Eu P_i\right)=0$. Therefore every homogeneous component $\x\partial_{x_0}P_i-\Eu P_i$ must be zero, that is $\difbar P_i=0$ for $i=0,\ldots,d$.
It is then sufficient to prove that if the real components of $f$ are homogeneous polynomials of degree $k$ in $x_0,\ldots,x_n$, then there exists $a\in\Aa$ such that $f(x)=x^k a$. Since $f$ is slice-regular, it holds 
\[
0=\difbar f=\partial_{x_0}f-\x^{-1}((x_0\partial_{x_0}+\Eu )f-x_0\partial_{x_0}f)
=\partial_{x_0}f-\x^{-1}(k f-x_0\partial_{x_0}f).
\]
This implies that $kf(x)=(x_0+\x)\partial_{x_0}f=x\, \partial_{x_0}f$. Since $\partial_{x_0}$ and $\difbar$ commute, we can repeat the computation with $\partial_{x_0}f\in\ker \difbar$ and get, inductively, that
\[
k! f(x)=x^k\partial^k_{x_0}f(x).
\]
This means that $f(x)=x^k a$ for some $a\in\Aa$.
\end{proof}

Now we introduce the \emph{spherical Dirac operator} for $M$. 
For any $i,j$ with $1\le i,j\le n$, let $L_{ij}=x_i\partial_{x_j}-x_j\partial_{x_i}$ and let
\[
\GB=-\sum_{1\le i<j\le n} v_i(v_jL_{ij})=-\tfrac12\sum_{i,j=1}^n v_i(v_jL_{ij})
\] 
be the \emph{spherical Dirac operator} associated to the basis $\B$. 
It acts on $C^1$ functions $f$ as $\GB f=-\tfrac12\sum_{i,j=1}^n v_i(v_jL_{ij}f)$. The two expressions of $\GB$ given above are equivalent thanks to \eqref{eq:anticommute}. 
The operators $L_{ij}$ are tangential differential operators for the spheres $\s_x\cap M=\alpha+\beta\s_M$, with $x=\alpha+I\beta\in\Q_{\Aa}\setminus\rr$. 
We now prove the main relation linking the operators $\dM$, $\difbarM$ and $\Gamma_\B$. This result improves what obtained in \cite[Theorem 8]{CRoperators}.  

\begin{theorem}\label{teo:difference}
Let $f:\OO\to \Aa$ be a $\mathcal{C}^1$ function. 
Then the following formula holds in $\OO\setminus\R$:
\[
\dM f-\difbarM f=-\x ^{-1} \Gamma_\B f.
\]
\end{theorem}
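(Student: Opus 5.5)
The plan is to clear the factor $\x^{-1}$ by left multiplication by $\x$, and then to expand $\x\,\dM f$ in the hypercomplex basis, splitting the resulting double sum into its diagonal and off-diagonal parts. The diagonal part will produce the Euler term and the off-diagonal part the spherical Dirac term. The only subtlety is non-associativity of $\Aa$, which we control with alternativity and identity \eqref{eq:anticommute}.

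\emph{Step 1 (reduction).} On $\OO\setminus\R$, the definitions of $\dM=\dB$ and of $\difbarM$ in \eqref{eq:theta} give
\[
\dM f-\difbarM f=\textstyle\sum_{j=1}^n v_j\,\partial_{x_j}f+\x^{-1}\Eu f .
\]
Since $\x\in M$ and $\x^2=-\|x\|^2$, we have $\x^{-1}=-\x/\|\x\|^2$ (here $\|\x\|\neq0$ because $x\notin\R$). By left alternativity, $\x(\x a)=\x^2a$ for every $a\in\Aa$, so left multiplication by $\x$ is invertible with inverse left multiplication by $\x^{-1}$. It therefore suffices to prove
\[
\x\Big(\textstyle\sum_{j=1}^n v_j\,\partial_{x_j}f\Big)=-\Eu f-\Gamma_\B f .
\]

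\emph{Step 2 (expansion).} Write $\x=\sum_i x_iv_i$. The left-hand side becomes $\sum_{i,j=1}^n x_i\,v_i(v_j\partial_{x_j}f)$.

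For the diagonal terms $i=j$, alternativity gives $v_i(v_ia)=v_i^2a=-a$. These terms therefore contribute $-\sum_i x_i\partial_{x_i}f=-\Eu f$.

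For the off-diagonal terms, group the pairs $(i,j)$ and $(j,i)$ with $i<j$. By \eqref{eq:anticommute},
\[
x_j\,v_j(v_i\partial_{x_i}f)=-x_j\,v_i(v_j\partial_{x_i}f).
\]
Hence the pair contributes
\[
v_i\big(v_j(x_i\partial_{x_j}f-x_j\partial_{x_i}f)\big)=v_i(v_jL_{ij}f).
\]
Summing over $i<j$ gives exactly $-\Gamma_\B f$, by the definition of $\Gamma_\B$.

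\emph{Main obstacle.} The only delicate point is the bookkeeping of parentheses in the non-associative setting. Specifically, one must check two things: that $\x^{-1}(\x a)=a$ holds, which follows from left alternativity applied to $\x\in M$, and that the real coefficients $x_i,x_j$ and the real derivatives can be moved freely through products, which holds because they are real scalars. Once these are verified, the identity is purely algebraic, and multiplying the result of Step 2 on the left by $\x^{-1}$ yields the stated formula on $\OO\setminus\R$.
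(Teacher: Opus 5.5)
Your proof is correct, and it takes a lighter route than the paper's. Both arguments clear $\x^{-1}$ by left multiplication with $\x$, but they apply it to opposite sides of the identity.

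The paper expands $\x\big(\Eu f+\Gamma_\B f\big)$. Since $\Gamma_\B$ already carries two units, this produces triple products $v_k(v_i(v_j\,\partial_{x_j}f))$. The paper then has to separate the terms with a repeated index from those with three distinct indices, and show that the latter cancel by antisymmetry in $j,k$, before reaching $-\x^2\sum_j v_j\partial_{x_j}f$.

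You expand $\x\big(\sum_j v_j\partial_{x_j}f\big)$ instead. Only products of two units occur, so $v_i(v_ia)=-a$ and \eqref{eq:anticommute} give $-\Eu f-\Gamma_\B f$ at once. The two identities are equivalent: apply $\x$ once more and use $\x(\x a)=\x^2a$ with $\x^2$ real. In effect, the paper's triple-product expansion re-verifies that left-alternative identity by hand.

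The trade-off is at the last step. Since $\x^{-1}$ is a real multiple of $\x$, the paper passes from $\x(\Eu f+\Gamma_\B f)=\|\x\|^2\sum_jv_j\partial_{x_j}f$ to the formula with no further algebra. You instead need $\x^{-1}(\x a)=a$, which you correctly derive from left alternativity.

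Your computation is in fact the one the paper itself uses later, in the proofs of Proposition \ref{pro:kernels} and Theorem \ref{teo:C1_sliceness}, to identify $\x\,\DD_\B g+\Eu g$ with $-\Gamma_M g$ for reflection-invariant $g$.

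One harmless slip: in Step 1 it should read $\x^2=-\|\x\|^2$, not $-\|x\|^2$. The formula $\x^{-1}=-\x/\|\x\|^2$ that you actually use is the correct one.
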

\begin{proof}
Using \eqref{eq:anticommute}, we obtain
\begin{align*}
&\x\left(\sum_{i=1}^n x_i\partial_{x_i}f+\Gamma_\B f\right)=\sum_{k=1}^nx_kv_k\sum_{i=1}^n x_i\partial_{x_i}f-\tfrac12\sum_{k=1}^nx_kv_k\sum_{i,j}v_i(v_j(L_{ij}f))\\
&=\sum_{i,k}x_kx_iv_k\dds if-\tfrac12 \sum_{i,j,k}x_kx_i v_k(v_i(v_j\dds jf))+\tfrac12 \sum_{i,j,k}x_kx_j v_k(v_i(v_j\dds if))\\
&=\sum_{i,k}x_kx_iv_k\dds if+\tfrac12 \sum_{k,j}x_k^2 v_j\dds jf+ \tfrac12\sum_{\stackrel{i,j,k}{i\ne j}}x_kx_j v_k(v_i(v_j\dds if))-\tfrac12 \sum_{i,k}x_kx_i v_k\dds if\\
&=\sum_{i,k}x_kx_iv_k\dds if+\tfrac12 n(\x)\sum_j v_j\dds jf+ \tfrac12\sum_{\stackrel{i,j,k}{i\ne j,j\ne k}}x_kx_j v_k(v_i(v_j\dds if))+ \sum_{\stackrel{i,k}{i\ne k}}x_k^2 v_k(v_i(v_k\dds if))\\
&\quad-\tfrac12 \sum_{i,k}x_kx_i v_k\dds if\\
&=\sum_{i,k}x_kx_iv_k\dds if+\tfrac12 n(\x)\sum_j v_j\dds jf+ 
\sum_{i\ne j} x_ix_j v_i(v_i(v_j\dds if))+ \sum_{\stackrel{i,k}{i\ne k}}x_k^2 v_i\dds if-\tfrac12 \sum_{i,k}x_kx_i v_k\dds if\\
&=\tfrac12\sum_{i,k}x_kx_iv_k\dds if+n(\x)\sum_j v_j\dds jf+
-\tfrac12\sum_{i\ne j} x_ix_j v_j\dds if-\tfrac12 \sum_{i}x_i^2 v_i\dds if\\
&=-\x^2\sum_j v_j\dds jf
,
\end{align*}
where we used the fact that the sum of $x_kx_j v_k(v_i(v_j\dds if))$ over all distinct indices $i,j,k$ in the set $\{1,\ldots,n\}$ vanishes, since it is antisymmetric w.r.t.\ the indices $j$ and $k$. 
Therefore we have
\[
\x^{-1}\left(\sum_{i=1}^m x_i\partial_{x_i}f+\Gamma_\B f\right)=-\sum_{j=1}^m v_j\dds jf\text{\quad on $\OO\setminus\rr$}
\]
from which we deduce that $
\dM f-\difbarM f=\sum_{i=1}^m v_i\dds if+\x^{-1}\sum_{i=1}^m x_i\partial_{x_i}f=-\x^{-1}\Gamma_\B f$.
\end{proof}

Theorem \ref{teo:difference} shows that also the spherical Dirac operator 
does not depend on the choice of $\B$. 

\begin{definition}\label{def:GammaM}
The \emph{spherical Dirac operator for $M$} is defined as $\Gamma_M:=\Gamma_\B$ for any choice of the hypercomplex basis $\B$ of $M$. 
\end{definition}
In view of the foregoing definition, we can rewrite the formula in Theorem \ref{teo:difference} as 
\begin{equation}\label{eq:difference}
\dM f-\difbarM f=-\x ^{-1} \Gamma_M f.
\end{equation}

\section{Dunkl-Dirac operators, sliceness and regularity}\label{sec:Dunkl-Dirac_operators}

\subsection{Dunkl-Dirac operators}\label{sub:Dunkl-Dirac_operators}

We recall from \cite{DeBieGenestVinet} and \cite{Dunkl} the definition of the Dunkl operators associated to the abelian reflection group $\zz_2^n=\zz_2\times\cdots\times\zz_2$. If $r_i$ is the reflection operator w.r.t.\ the $i$-th coordinate of $\rr^n$, i.e., for any function $f$,
\[
(r_if)(x)=f(x_1,\ldots,-x_i,\ldots,x_n)
\]
and $k_1,\ldots,k_n\in\rr$ are the multiplicities, the \emph{Dunkl operators} $T_1,\ldots,T_n$ on $\rr^n$ associated to the reflection group $\zz_2^n$ are defined as follows:
\[
T_i=\dd{}{x_i}+\frac{k_i}{x_i}(1-r_i),\quad i=1,\ldots,n.
\]
Here $1$ denotes the identity operator. The operators $T_i$ commute each other and define the \emph{Dunkl-Laplace} operator $\Delta_D=\sum_{i=1}^nT_i^2$. 

We recall from \cite{Ren_et_al} the definition of the Dunkl-Dirac operator $\DD=\sum_{i=1}^ne_iT_i$ on the Clifford algebra $\R_n$ with generators $\{e_0=1,e_1,\ldots,e_n\}$.  Observe that $\DD$ coincides with the Dirac operator of $\R_n$ when the multiplicities $k_i$ are all zero. Now we extend this definition from Clifford algebras to any hypercomplex subspace.

\begin{definition}\label{def:DiracDunkl}
Let $\B=(1,v_1,\ldots,v_n)$ be a hypercomplex basis of the hypercomplex subspace $M$ of $\Aa$, with associated coordinates $x_0,x_1,\ldots,x_n$. Let $\OO\subseteq M$ be open, invariant w.r.t.\ reflections $r_i$, $i=1,\ldots,n$. We define the \emph{$\zz_2^n$ Dunkl-Dirac operator $\DD_\B:C^1(\OO,\Aa)\to C^0(\OO,\Aa)$} w.r.t.\ $\B$ 
as 
\[
\DD_\B:=\sum_{i=1}^nv_i T_{\B,i},\text{\quad where\quad} T_{\B,i}f=L\circ T_i {(L^{-1}\circ f\circ L_{|\OO'})}\circ L^{-1}=\partial_{x_i}f+\frac{k_i}{x_i}(f-r_i f),
\]
and $\OO'=L^{-1}(\OO)$. Here the operator $T_{\B,i}$ acts on every real component of an $\Aa$-valued function $f$. 
We also define the \emph{Dunkl-Cauchy-Riemann operator} of $M$ w.r.t.\ $\B$ as $D_\B:=\partial_{x_0}+\DD_\B$.
\end{definition}

The operators $T_{\B,i}$ commute each other. Moreover, if $f\in C^\ell(\OO,\Aa)$, then  $T_i {(L^{-1}\circ f\circ L_{|\OO'})}\in C^{\ell-1}(\OO',\Aa)$ and then $T_{\B,i}f\in C^{\ell-1}(\OO,\Aa)$ for any $\ell\ge1$ (see \cite[Lemma\ 2.9]{Rosler}). It follows that $D_\B f,\DD_\B f\in C^{\ell-1}(\OO,\Aa)$. Moreover, when ${\bf k}=(k_1,\ldots,k_n)=0$, then $D_\B=\dM$. In general, the Dunkl-Cauchy-Riemann operator can be written as
\begin{equation}\label{eq:db}
D_\B f=\dM f+\sum_{i=1}^n \frac{k_iv_i}{x_i}(f-r_i f).  
\end{equation}
If the segment in $M$ joining $x$ and the reflected point $r_i(x)$ is contained in $\OO$ for every $i=1,\ldots,n$ and $x\in\OO$, we can also write, as in \cite[Lemma\ 2.9]{Rosler}, 
\begin{equation}\label{eq:db_bis}
D_\B f=\dM f+2\sum_{i=1}^n k_iv_i\int_0^1\partial_{x_i}f(x-2tx_iv_i)dt.  
\end{equation}

\begin{definition}\label{def:Dunklmonogenic}
Given an open set $\OO\subseteq M$, invariant w.r.t.\ reflections $r_i$., the functions $f\in C^1(\OO,\Aa)$ in the kernel of $D_\B$ are called (left) \emph{Dunkl monogenic functions} on $\OO$. 
\end{definition}

By direct computation, remembering \eqref{eq:anticommute} in the nonassociative case, one obtains
\[
\DD_\B^2 =-\sum_{i=1}^n T^2_{\B,i}=:-\Delta_{D,n},\quad D_\B(\partial_{x_0}-\DD_\B)=(\partial_{x_0}-\DD_\B)D_\B =\partial_{x_0}^2+\Delta_{D,n}=:\Delta_{D,M}
\]
where $\Delta_{D,n}$ is the \emph{Dunkl-Laplace operator} of $M\cap\ker(t)$ and $\Delta_{D,M}$ is the  \emph{Dunkl-Laplace operator} of $M$. 
Therefore every Dunkl monogenic function of class $C^2$ belongs to the kernel of $\Delta_{D,M}$, namely, it is \emph{Dunkl harmonic}.


For $x\in M$, let $\x =\IM(x)=\sum_{i=1}^n x_iv_i$. 
When $M$ is the paravector subspace of the Clifford algebra $\R_n$, the Dunkl-Dirac operator together with the left multiplication operator by $\x$ provide a realization of the orthosymplectic Lie superalgebra $\mathfrak{osp}(1|2)$ (see \cite[Lemma 4.1]{Orsted} and \cite[Theorem 1]{deBieOrstedSombergSoucek}). 

Let  $[\ ,\ ]$ denote the commutator and $\{\ ,\ \}$ the anticommutator of a pair of real-linear operators w.r.t.\ composition. 

\begin{proposition}\label{pro:orsted}
It holds 
\begin{align*}
&\left\{\x,\x\right\}=-2\|\x\|^2=-2n(\x);& &\left\{\DD_\B,\DD_\B\right\}=-2\Delta_{D,n}; && \left[\Eu,\x\right]=\x;\\
&\left\{\x,\DD_\B\right\}=-2(\Eu+\gamma);& &\left[\DD_\B,\Eu+\gamma\right]=\left[\DD_\B,\Eu\right]=\DD_\B; && \left[\Delta_{D,n},\x\right]=2\DD_\B;\\
& \left[\DD_\B,\|\x\|^2\right]=2\x,&&
\end{align*}
where $\gamma=\frac n2+\kappa$, $\kappa=\sum_{i=1}^nk_i$.
\end{proposition}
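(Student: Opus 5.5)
The plan is to verify each relation by direct computation on an arbitrary $f\in C^1$ (or $C^2$ where needed), working componentwise with the operators $T_{\B,i}$ and left multiplications by $v_i$. Products of imaginary units are handled via \eqref{eq:anticommute} and the identity $v_i(v_ia)=-a$. The latter holds by alternativity, since $[v_i,v_i,a]=0$ gives $v_i(v_ia)=(v_iv_i)a=-a$.

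The key preliminary is a set of scalar commutation rules for the $\zz_2^n$ Dunkl operators. Two of them concern reflections: $r_i$ commutes with multiplication by $x_j$ for $j\ne i$ and anticommutes with multiplication by $x_i$. From these I would derive
\[
[T_{\B,i},x_j]=\delta_{ij}(1+2k_ir_i),\qquad [\Eu,x_j]=x_j,\qquad [\Eu,T_{\B,i}]=-T_{\B,i},
\]
all acting on every real component. The first comes from $T_i(x_if)=f+x_i\partial_{x_i}f+\frac{k_i}{x_i}(x_if+x_ir_if)$, which shows that the reflection term flips sign. The third follows because $T_i$ is homogeneous of degree $-1$ and $r_i$ commutes with $\Eu$.

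With these rules the relations follow in order.

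\emph{The two anticommutators of a single operator.} The relation $\{\x,\x\}=2\x^2=-2n(\x)$ was noted in \S2. For $\{\DD_\B,\DD_\B\}=2\DD_\B^2=-2\Delta_{D,n}$ I use the already stated formula for $\DD_\B^2$. It rests on the commutation of the $T_{\B,i}$, on $v_i(v_j\cdot)=-v_j(v_i\cdot)$ for $i\ne j$, and on the fact that real-linear $T_{\B,j}$ commutes with left multiplication by the constant $v_i$.

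\emph{The relation $[\Eu,\x]=\x$.} This is immediate from $[\Eu,x_j]=x_j$.

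\emph{The central relation $\{\x,\DD_\B\}$.} I expand
\[
\x\DD_\B f+\DD_\B(\x f)=\sum_{i,j}\bigl(x_jv_j(v_iT_if)+v_i(v_jT_i(x_jf))\bigr).
\]
For $i\ne j$ we have $T_i(x_jf)=x_jT_if$, and the two terms cancel by \eqref{eq:anticommute}. For $i=j$ we get $-x_iT_if-T_i(x_if)=-2x_iT_if-(1+2k_ir_i)f$. Next, $x_iT_if=x_i\partial_{x_i}f+k_i(f-r_if)$. Summing over $i$ gives $-2\Eu f-nf-2\kappa f$, that is $-2(\Eu+\gamma)$; the $r_i$ terms cancel exactly, which is the point where care is needed.

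\emph{The relations $[\DD_\B,\Eu]=\DD_\B$ and $[\Delta_{D,n},\x]=2\DD_\B$.} The first follows from $[\Eu,T_{\B,i}]=-T_{\B,i}$, and since $\gamma$ is a constant it commutes with everything. For the second I use the superalgebra identity
\[
[\DD_\B^2,\x]=\DD_\B\{\DD_\B,\x\}-\{\DD_\B,\x\}\DD_\B=-2[\DD_\B,\Eu+\gamma]=-2\DD_\B,
\]
and combine it with $\DD_\B^2=-\Delta_{D,n}$. This identity needs only associativity of operator composition, which holds even though $\Aa$ is not associative.

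\emph{The relation $[\DD_\B,\|\x\|^2]=2\x$.} The quickest route is the same identity applied to $\|\x\|^2=-\x^2$:
\[
[\DD_\B,\x^2]=\{\DD_\B,\x\}\x-\x\{\DD_\B,\x\}=-2[\Eu,\x]=-2\x.
\]
Here $\x^2$ is understood as the composition of left multiplications. This is legitimate because $\x(\x a)=\x^2a$ by alternativity, so composing the operator $\x$ with itself is multiplication by the real scalar $-\|\x\|^2$. Alternatively, the relation follows directly from $T_i(x_i^2f)=2x_if+x_i^2T_if$, where the reflection terms vanish because $x_i^2$ is $r_i$-invariant.

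The main obstacle I expect is bookkeeping in the nonassociative setting. Every step that moves a $v_i$ past a $v_j$ or an $x_j$ must be justified through \eqref{eq:anticommute} or through alternativity ($v_i(v_ia)=-a$ and $\x(\x a)=\x^2a$), never through associativity of $\Aa$. A second point of care is the reflection term in $\{\x,\DD_\B\}$, where the constant $\gamma$ arises and the $r_i$ contributions must cancel.
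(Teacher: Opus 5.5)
Your proposal is correct and follows essentially the same route as the paper. For $\{\x,\DD_\B\}=-2(\Eu+\gamma)$ you use the same diagonal/off-diagonal splitting, based on \eqref{eq:anticommute} and $v_i(v_ia)=-a$. For $[\Delta_{D,n},\x]=2\DD_\B$ and $[\DD_\B,\|\x\|^2]=2\x$ you use the same operator-composition manipulations built on that relation.

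The only cosmetic difference is how you obtain $[\DD_\B,\Eu]=\DD_\B$. You derive it from the homogeneity relation $[\Eu,T_{\B,i}]=-T_{\B,i}$, whereas the paper computes $T_{\B,i}(x_i\partial_{x_i}f)-x_i\partial_{x_i}(T_{\B,i}f)$ explicitly. This is the same fact, and your version spares you tracking the reflection terms by hand.
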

\begin{proof}
We can repeat the arguments of \cite[Lemma 4.1]{Orsted}, using \eqref{eq:anticommute} in the nonassociative case. The first three identities are trivial. We prove the fourth. It holds
\begin{align*}
&\x\,\DD_\B f+\DD_\B(\x f)=\sum_{i=1}^nx_iv_i\left(\sum_{j=1}^nv_j T_{\B,j}f\right)+
\sum_{i=1}^n v_i T_{\B,i}\left(\sum_{j=1}^n x_jv_jf\right)\\
&\quad=\sum_{i=1}^n\left(x_iv_i\left(v_i T_{\B,i}f\right)+v_i T_{\B,i}\left(x_iv_if\right)\right)+
\sum_{i\ne j}v_i\left(x_iv_j T_{\B,j}f+T_{\B,i}\left(x_jv_jf\right)\right)=:S_1+S_2.
\end{align*}
Using $\eqref{eq:anticommute}$ and the equality $T_{\B,i}(x_jf)=x_jT_{\B,i}\left(f\right)$, we get
\begin{align*}
&S_2=\sum_{i\ne j}v_i\left(v_j\left(x_iT_{\B,j}f+T_{\B,i}\left(x_jf\right)\right)\right)=\sum_{i\ne j}v_i\left(v_j\left(x_iT_{\B,j}f+x_jT_{\B,i}\left(f\right)\right)\right)\\
&\quad+\sum_{i\ne j}v_i\left(v_j\left(T_{\B,i}(x_jf)-x_jT_{\B,i}\left(f\right)\right)\right)=0.
\end{align*}
Using the equality $v_i(v_i a)=-a$, valid for any $a\in\Aa$, we obtain
\begin{align*}
S_1&=-\sum_{i=1}^n\left(x_iT_{\B,i}f+T_{\B,i}\left(x_if\right)\right)\\
&=-\sum_{i=1}^n\left(x_i\partial_{x_i}f+\partial_{x_i}\left(x_if\right)\right)-\sum_{i=1}^n\left(k_i(f(x)-(r_if)(x))+\frac{k_i}{x_i}\left(x_if(x)+x_i(r_if)(x)\right)\right)
\\
&=-2\Eu f-nf-2\kappa f=-2\Eu f-2\gamma f
\end{align*}
and the equality $\left\{\x,\DD_\B\right\}=-2(\Eu+\gamma)$ is proved.
To prove the fifth equality, we firstly observe that $T_{\B,i}$ and $\partial_{x_j}$ commute for any $i\ne j$, and that
\begin{align*}
&T_{\B,i}(x_i\partial_{x_i}f)-x_i\partial_{x_i}(T_{\B,i}f)=
\partial_{x_i}f+k_i\partial_{x_i}f+k_i\partial_{x_i}(r_if)+\frac{k_i}{x_i}(f-r_if)-k_i\partial_{x_i}(f-r_if)\\
&\quad
=T_{\B,i}f.
\end{align*}
Therefore 
\begin{align*}
\left[\DD_\B,\Eu\right]&
=\sum_{i=1}^nv_i\left(T_{\B,i}\left(x_i\partial_{x_i}f\right)-x_i\partial_{x_i}\left(T_{\B,i}f\right)\right)
=\DD_\B f.
\end{align*}
To prove that $[\Delta_{D,n},\x]=2\DD_\B$, we compute
\begin{align*}
&\DD_\B\DD_\B(\x f)-\x\DD_\B^2 f=\DD_\B(-\x\DD_\B f+\{\DD_\B,\x\}f)-\x\DD_\B^2f\\
&\quad=-\DD_\B(\x\DD_\B f)-2\DD_\B(\Eu+\gamma)f-\x\DD_\B^2f=-\{\DD_\B,\x\}\DD_\B f-2\DD_\B(\Eu+\gamma)f\\
&\quad=2(\Eu+\gamma)\DD_\B f-2\DD_\B(\Eu+\gamma)f=-2\left[\DD_\B,\Eu+\gamma\right]f=-2\DD_\B f.
\end{align*}
We conclude proving the seventh equality:
\begin{align*}
\left[\DD_\B,\|\x\|^2\right]f&=-\DD_\B(\x^2 f)+\x^2 \DD_\B f=\x\DD_\B (\x f)-\left\{\x,\DD_\B\right\}(\x f)+\x^2\DD_\B f=\\
&=\x\left\{\x,\DD_\B\right\}f+2(\Eu+\gamma)(\x f)=2\left[\Eu+\gamma,\x\right]f=2\left[\Eu,\x\right]f=2\x f.
\end{align*}

\end{proof}

It can be proved, following the Clifford algebra case \cite{deBieOrstedSombergSoucek}, that the commutation relations of Proposition \ref{pro:orsted} can be completed to show that we always realize the Lie superalgebra $\mathfrak{osp}(1|2)$. In the following we will use only the relations proved in Proposition \ref{pro:orsted}. 

\begin{proposition}\label{pro:Lie}
On any hypercomplex subspace $M$ with hypercomplex basis $\B$, the operators $\x$  and $\DD_\B$ generate the Lie superalgebra $\mathfrak{osp}(1|2)$.\hfill\qed
\end{proposition}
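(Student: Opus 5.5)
The plan is to exhibit an explicit basis of operators and show that it closes under (anti)commutators with the structure constants of $\mathfrak{osp}(1|2)$, following \cite{deBieOrstedSombergSoucek}. The odd part will be spanned by $F^+:=\x$ and $F^-:=\DD_\B$. The even part will be spanned by their anticommutators: $\{\x,\x\}=-2\|\x\|^2$, $\{\DD_\B,\DD_\B\}=-2\Delta_{D,n}$, and $\{\x,\DD_\B\}=-2(\Eu+\gamma)$. Up to normalisation, I take $E^+:=\|\x\|^2$, $E^-:=\Delta_{D,n}$ and $H:=\Eu+\gamma$. By Proposition \ref{pro:orsted}, all anticommutators of odd elements already lie in the even span.

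It remains to check the brackets involving the even elements.

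\textbf{Even with odd.} We need $[H,F^\pm]=\pm F^\pm$, which is given by $[\Eu,\x]=\x$ and $[\DD_\B,\Eu+\gamma]=\DD_\B$ in Proposition \ref{pro:orsted}. We need $[E^+,F^-]$ proportional to $F^+$, which is $[\DD_\B,\|\x\|^2]=2\x$. We need $[E^-,F^+]$ proportional to $F^-$, which is $[\Delta_{D,n},\x]=2\DD_\B$. Finally, $[E^\pm,F^\pm]=0$, since $\|\x\|^2=-\x^2$ commutes with $\x$ and $\Delta_{D,n}=-\DD_\B^2$ commutes with $\DD_\B$.

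\textbf{Even with even.} Here I will use the super-Jacobi identity, which holds automatically for operators in an associative algebra of linear maps under composition. Writing each even element as an anticommutator of odd ones, every even--even bracket reduces to the odd--even relations just listed. For example, $[\Eu+\gamma,\|\x\|^2]=-[\Eu,\x^2]=-(\x[\Eu,\x]+[\Eu,\x]\x)=2\|\x\|^2$. Similarly, $[\Delta_{D,n},\|\x\|^2]$ is computed from $[\DD_\B,\x^2]$ via the relations above and equals a multiple of $\Eu+\gamma$, namely $4(\Eu+\gamma)$ up to sign. This reproduces the $\mathfrak{sl}_2$ relations of the even part.

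\textbf{Identifying the algebra.} After normalising signs, the structure constants are exactly those of the standard basis of $\mathfrak{osp}(1|2)$. It remains to check that the five operators are linearly independent, so that we get a faithful copy rather than a quotient. I will argue this by degree: $\x$ raises polynomial degree by $1$, $\DD_\B$ lowers it by $1$, $\|\x\|^2$ raises it by $2$, $\Delta_{D,n}$ lowers it by $2$, and $\Eu+\gamma$ preserves it. Each is nonzero, for instance $\x\cdot 1\neq0$ and $\Delta_{D,n}\|\x\|^2\neq0$. The last claim needs $n+2\kappa\ne0$, or alternatively one can test on a suitable polynomial.

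The main obstacle I expect is bookkeeping in the nonassociative case, specifically ensuring that products like $\x(\x f)$ equal $-\|\x\|^2f$. That identity follows from alternativity and \eqref{eq:anticommute}, and I will invoke it wherever associativity is tacitly used. A second point of care is the degenerate multiplicity case $\gamma$ where some relation might collapse, which I will set aside by noting that the brackets themselves do not depend on the value of $\gamma$.
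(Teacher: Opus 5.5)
Your proposal is correct and follows the route the paper indicates; the paper writes no proof and only says that the relations of Proposition \ref{pro:orsted} can be completed as in the Clifford case of \cite{deBieOrstedSombergSoucek}, which is exactly your closure check on $\x,\DD_\B,\|\x\|^2,\Delta_{D,n},\Eu+\gamma$ (note the sign is exact: $[\Delta_{D,n},\|\x\|^2]=4(\Eu+\gamma)$). You could also skip the linear-independence step, because $\mathfrak{osp}(1|2)$ is simple, so any nonzero homomorphic image of it is automatically faithful.
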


Following \cite{DeBieGenestVinet}, we now define the \emph{(super)Casimir operator} for the $\mathfrak{osp}(1|2)$ realization of Proposition \ref{pro:Lie}:
\begin{equation}
\sC=\tfrac12\left(\left[\x,\DD_\B\right]-1\right)
\end{equation}
and the \emph{spherical Dunkl-Dirac operator} of $M$ w.r.t.\ $\B$:
\begin{equation}
\tGamma=\sC r,  
\end{equation}
where $r=\prod_{i=1}^nr_i$ is the composition of reflections $r_i$. These operators act on functions $f\in C^1(\OO,\Aa)$ as follows:
\[
\sC f=\tfrac12\left(\x\,\DD_\B f-\DD_\B(\x\, f)-f\right)\quad\text{and}\quad \tGamma f=\sC (f(x^c)).
\]

We generalize some properties proved in \cite{DeBieGenestVinet} over Clifford algebras.

\begin{proposition}\label{pro:operators}
\textrm{(i)}\ The operators $\sC$ and $\tGamma$ satisfy the following commutation relations:
\begin{equation}\label{eq:commutation}
  \left\{\sC,\DD_\B\right\}=0,\ \left\{\sC,\x\right\}=0,\ \left[\tGamma,\DD_\B\right]=0,\ \left[\tGamma,\x\right]=0.
\end{equation}
Moreover, it holds
\begin{equation}\label{eq:r}
  \{\DD_\B,r\}=\{\sC,r\}=\{\x,r\}=[\Eu,r]=0.
\end{equation}
\textrm{(ii)}\ If $\kappa=\sum_{i=1}^nk_i=(1-n)/2$, then the operator $\sC$ can be written as
\begin{equation}\label{casimir}
\sC=\x\,\DD_\B+\Eu.  
\end{equation}
\end{proposition}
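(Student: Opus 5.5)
The plan is to start with the reflection relations \eqref{eq:r}, because the commutation relations \eqref{eq:commutation} for $\tGamma=\sC r$ will then follow from those for $\sC$.

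\textbf{Step 1: relations with $r$.} For $\{\x,r\}=0$, compute $(\x\, rf)(x)=\x f(rx)$ and $(r(\x f))(x)=\IM(rx)f(rx)=-\x f(rx)$. For $\{\DD_\B,r\}=0$, verify on each summand that $T_{\B,i}r=-rT_{\B,i}$. Indeed $\partial_{x_i}$ anticommutes with $r_i$ and commutes with $r_j$ for $j\ne i$. The difference term $\frac{k_i}{x_i}(1-r_i)$ also anticommutes with $r_i$, because $1/x_i$ changes sign while $1-r_i$ commutes with $r_i$, and it commutes with $r_j$. Since the left multiplication by the constant $v_i$ commutes with $r$, we get $\DD_\B r=-r\DD_\B$. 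The relation $[\Eu,r]=0$ holds because $\Eu$ is invariant under each $x_i\mapsto -x_i$. For $\{\sC,r\}$, insert $r$ into $\sC=\tfrac12([\x,\DD_\B]-1)$ and move it across $\x$ and $\DD_\B$ using the two anticommutations just proved; the constant term $-\tfrac12$ is moved separately. This is the one place where I expect the real difficulty: the signs from two anticommutations and from the identity term must be tracked carefully. I would double-check this step against the explicit form \eqref{casimir} in the case where it applies.

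\textbf{Step 2: $\{\sC,\x\}=0$ and $\{\sC,\DD_\B\}=0$.} Following \cite{DeBieGenestVinet}, I would use only the identities of Proposition \ref{pro:orsted}. Expanding, $2\{\sC,\x\}=\x\DD_\B\x-\DD_\B\x^2+\x^2\DD_\B-\x\DD_\B\x-2\x=[\x^2,\DD_\B]-2\x$. Since $\x^2=-\|\x\|^2$ and $[\DD_\B,\|\x\|^2]=2\x$, this vanishes. Symmetrically, $2\{\sC,\DD_\B\}=[\x,\DD_\B^2]-2\DD_\B$, which is zero by $\DD_\B^2=-\Delta_{D,n}$ and $[\Delta_{D,n},\x]=2\DD_\B$. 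Nonassociativity causes no trouble here, since all manipulations are compositions of real-linear operators already justified in Proposition \ref{pro:orsted}. Then $[\tGamma,\x]=\sC r\x-\x\sC r=-\sC\x r-\x\sC r=-\{\sC,\x\}r=0$, and similarly for $\DD_\B$.

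\textbf{Step 3: part (ii).} From $\{\x,\DD_\B\}=-2(\Eu+\gamma)$ we get $\DD_\B\x=-\x\DD_\B-2(\Eu+\gamma)$. Hence $[\x,\DD_\B]=2\x\DD_\B+2\Eu+2\gamma$, and so $\sC=\x\DD_\B+\Eu+\gamma-\tfrac12$. With $\kappa=(1-n)/2$ we have $\gamma=\tfrac n2+\kappa=\tfrac12$, which gives \eqref{casimir}.
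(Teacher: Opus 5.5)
Apart from one point, your proposal follows the paper's proof. The paper also reduces $2\{\sC,\DD_\B\}$ and $2\{\sC,\x\}$ to $[\Delta_{D,n},\x]-2\DD_\B$ and $-[\|\x\|^2,\DD_\B]-2\x$ via Proposition \ref{pro:orsted}. It obtains the $\tGamma$ identities from these and \eqref{eq:r}, and proves (ii) exactly as in your Step 3. Your computation $[\tGamma,\x]=-\{\sC,\x\}r$ (and likewise for $\DD_\B$) usefully shows that only $\{\x,r\}=\{\DD_\B,r\}=0$ is needed there.

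The step you flagged, however, cannot be completed, because $\{\sC,r\}=0$ is false. Since $r$ anticommutes with both $\x$ and $\DD_\B$, it commutes with $\x\DD_\B$ and with $\DD_\B\x$ (two sign changes), hence with $[\x,\DD_\B]$. It also commutes with the identity, so $r\sC=\sC r$. The cross-check against \eqref{casimir} that you propose gives the same result, since $r$ commutes with $\x\DD_\B$ and with $\Eu$. As $r$ is invertible, having both $\{\sC,r\}=0$ and $[\sC,r]=0$ would force $\sC=0$, which fails. For a constant $a\in\Aa$ one has $\DD_\B(\x a)=-2\gamma a$, so $\sC a=(\gamma-\frac12)a$ and $\{\sC,r\}a=(2\gamma-1)a\ne0$ whenever $\gamma\ne\frac12$. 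Under the normalization of (ii), take e.g.\ $n=2$, $k_1=k_2=-\frac14$: then $\sC x_1=\frac12x_1+\frac12x_2v_2v_1$ and $\{\sC,r\}x_1=-2\sC x_1\ne0$.

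So what you should state and prove is $[\sC,r]=0$. The paper's proof just calls \eqref{eq:r} trivial and misses this slip. The commutator version is all that is used afterwards. The $\tGamma$ identities do not involve it. The later arguments (Propositions \ref{pro:kernels}, \ref{pro:slicepoly}, proof of Theorem \ref{teo:C1_sliceness}) only need that $\sC$ and $\tGamma=\sC r$ have the same kernel, and that $\sC(f+rf)=0$ when $\sC f=0$. Both follow equally from $[\sC,r]=0$ and the invertibility of $r$.
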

\begin{proof}
(i)\quad From Proposition \ref{pro:orsted}, we get
\begin{align*}
2\left\{\sC,\DD_\B\right\}f&=(\x\DD_\B-\DD_\B\x-1)\DD_\B f+\DD_\B(\x\DD_\B-\DD_\B\x-1)f\\
&=\x\DD_\B^2 f-\DD_\B f-\DD_\B^2(\x f)-\DD_\B f=-\x\Delta_{D,n}f-2\DD_\B f+\Delta_{D,n}(\x f)\\
&=\left[\Delta_{D,n},\x\right]f-2\DD_\B f=0
\end{align*}
and
\begin{align*}
2\left\{\sC,\x\right\}f&=(\x\DD_\B-\DD_\B\x-1)(\x f)+\x(\x\DD_\B-\DD_\B\x-1)f\\
&=\x\DD_\B(\x f)-\DD_\B(\x^2 f)-\x f+\x^2\DD_\B f-\x\DD_\B(\x f)-\x f\\
&=\left[\x^2,\DD_\B\right]f-2\x f=-\left[\|\x\|^2,\DD_\B\right]f-2\x f=0.
\end{align*}
The relations \eqref{eq:r} are trivial. Using these and the proved equalities for $\sC$, we deduce the two identities involving $\tGamma$.

(ii)\quad If $\sum_{i=1}^nk_i=(1-n)/2$, then $\gamma=1/2$ and
\[
\sC=\frac12\left(\x\,\DD_\B-\DD_\B\x-1\right)=\frac12\left(2\x\,\DD_\B-\left\{\x,\DD_\B\right\}-1\right)=\x\,\DD_\B+\Eu+\gamma-1/2=\x\,\DD_\B+\Eu.
\]
\end{proof}

\subsection{Dunkl-Dirac operators and sliceness} 
\label{sub:dirac_dunkl_operators_and_sliceness}


We begin giving a necessary condition for sliceness related to the Casimir operator $\sC$. 

\begin{proposition}\label{pro:kernels}
Assume $\sum_{i=1}^nk_i=(1-n)/2$. Let $\OO\subseteq M$ be open and axially symmetric and let $f\in\SL^1(\OO)$ be a slice function of class $C^1$. Then $\sC f=\tGamma f=0$. 
\end{proposition}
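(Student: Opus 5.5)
The plan is to verify $\sC f=0$ by a direct computation on $\OO\setminus\rr$ and then to extend the identity by continuity. Since $\kappa=\sum_i k_i=(1-n)/2$, Proposition \ref{pro:operators}(ii) lets me use the simpler form $\sC=\x\,\DD_\B+\Eu$.

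\emph{Setup.} On $\OO\setminus\rr$ write $x=\alpha+\beta J$, with $\beta=\|\x\|$ and $J=\x/\beta\in\s_M$. Then
\[
f(x)=F_\emptyset(\alpha,\beta)+J\,F_1(\alpha,\beta),
\]
where the stem components are $C^1$ for $\beta>0$. Axially symmetric sets are invariant under every reflection $r_i$, so $\DD_\B$ is well defined on $\OO$. By \cite[Lemma 2.9]{Rosler}, $\sC f$ is continuous on $\OO$. It therefore suffices to prove $\sC f=0$ off the real axis.

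\emph{Difference part of $T_{\B,i}$.} The reflection $r_i$ preserves $\alpha$ and $\beta$ and sends $J$ to $J-2x_iv_i/\beta$. Hence
\[
f-r_if=\frac{2x_i}{\beta}\,v_iF_1 .
\]
Multiplying by $k_iv_i/x_i$ and using $v_i(v_ia)=-a$ gives $-2k_iF_1/\beta$. Summing over $i$, the difference part of $\DD_\B f$ equals $-2\kappa F_1/\beta=(n-1)F_1/\beta$.

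\emph{Derivative part of $T_{\B,i}$.} Use $\partial_{x_i}\beta=x_i/\beta$ and $\partial_{x_i}(x_j/\beta)=\delta_{ij}/\beta-x_ix_j/\beta^3$, together with $\sum_i x_iv_i/\beta=J$. The term coming from $F_\emptyset$ contributes $J\partial_\beta F_\emptyset$. The term where the derivative falls on $J$ gives
\[
\sum_i v_i(v_iF_1)/\beta-J(JF_1)/\beta=-(n-1)F_1/\beta .
\]
The remaining term gives $J(J\partial_\beta F_1)=-\partial_\beta F_1$. Here alternativity is used in the form $J(Ja)=J^2a=-a$, which lets these left products regroup.

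\emph{Combining.} The $(n-1)F_1/\beta$ terms cancel; this cancellation is exactly where the normalization $\kappa=(1-n)/2$ enters. We obtain
\[
\DD_\B f=J\partial_\beta F_\emptyset-\partial_\beta F_1 .
\]
Left multiplication by $\x=\beta J$, again with $J(Ja)=-a$, gives $\x\,\DD_\B f=-\beta\partial_\beta F_\emptyset-\beta J\partial_\beta F_1$. On the other hand, $\Eu$ is the radial derivative $\beta\partial_\beta$ at fixed $\alpha$, and $J$ is constant along rays. So $\Eu f=\beta\partial_\beta F_\emptyset+\beta J\partial_\beta F_1$, and the sum $\x\,\DD_\B f+\Eu f$ vanishes.

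\emph{Statement for $\tGamma$.} We have $\tGamma f=\sC g$ with $g(x)=f(x^c)$. This $g$ equals $F_\emptyset(\bar z)+JF_1(\bar z)=F_\emptyset(z)-JF_1(z)$. It is therefore the slice function induced by the stem function $G(z):=F(\bar z)=\overline{F(z)}$, which is again $C^1$. Hence $\tGamma f=\sC g=0$ by the first part.

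The main obstacle is the bookkeeping in the nonassociative products. Each left product must be of the form $v_i(v_i\cdot)$ or $J(J\cdot)$, so that it reduces to $-1$ without any global associativity. The derivative of $J$ must also be handled so that the two $(n-1)F_1/\beta$ contributions visibly cancel.
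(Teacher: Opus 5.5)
Your proof is correct, but it takes a different route from the paper.

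\textbf{The paper's route.} The paper never differentiates the unit $J=\x/\|\x\|$. It splits $f=\vs f+\x\,\sd f$ and uses the anticommutation $\{\sC,\x\}=0$ from Proposition~\ref{pro:operators} to get $\sC f=\sC(\vs f)-\x\,\sC(\sd f)$. It then observes that on functions invariant under all reflections $r_i$ (such as $\vs f$ and $\sd f$ for slice $f$), the Dunkl operators reduce to partial derivatives. Hence $\x\,\DD_\B+\Eu$ collapses to $-\Gamma_M$, which annihilates functions constant on the spheres $\s_x\cap M$. The claim for $\tGamma$ follows from $\{\sC,r\}=0$.

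\textbf{Your route.} You instead compute directly in stem-function coordinates. For $\tGamma$ you note that $f(x^c)$ is again a $C^1$ slice function, induced by $\overline{F}$, rather than using $\{\sC,r\}=0$. Your computation needs only the form $\sC=\x\,\DD_\B+\Eu$ and the left alternative law $J(Ja)=-a$. It also gives more explicit information. It produces the identity $\DD_\B f=J\partial_\beta F_\emptyset-\partial_\beta F_1=-\x^{-1}\Eu f$, which is directly $D_\B f=\difbarM f$ for slice $f$ (Proposition~\ref{pro:equality}). If you keep $\kappa$ general and use $\sC=\x\,\DD_\B+\Eu+\gamma-\tfrac12$ with $\gamma=\tfrac n2+\kappa$, the same bookkeeping gives $\sC f=(\gamma-\tfrac12)\,f(x^c)$ for every $C^1$ slice function.

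\textbf{Where the normalization enters.} That general formula shows $\gamma=\tfrac12$ enters in two places: in the cancellation of the $F_1/\beta$ terms you point out, and in the constant term of $\sC$. So your remark that the cancellation is exactly where it enters is slightly incomplete.

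\textbf{What each buys.} The paper's argument isolates the mechanism ``$\sC=-\Gamma_M$ on $\zz_2^n$-invariant functions''. That mechanism is reused for the converse implications (Theorem~\ref{teo:C1_sliceness}, Proposition~\ref{pro:SA}), and it avoids the nonassociative bookkeeping for derivatives of $J$.
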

\begin{proof}
From the decomposition $f=\vs f+\x\, f'_s$ and \eqref{eq:commutation} we get $\sC f=\sC (\vs f)-\x\,\sC (f'_s)$. Using \eqref{casimir} and \eqref{eq:anticommute}, we get
\begin{align*}
\sC(\vs f)&=\x\,\DD_\B(\vs f)+\Eu(\vs f)=\sum_{i=1}^nx_iv_i\left(\sum_{j=1}^nv_j T_{\B,j}(\vs f)\right)+\sum_{i=1}^nx_i\partial_{x_i}(\vs f)\\
&=-\sum_{i=1}^nx_i(T_{\B,i}(\vs f)-\partial_{x_i}(\vs f))+\sum_{i<j}(x_i v_i(v_j T_{\B,j}(\vs f))+x_jv_j(v_iT_{\B,i}(\vs f))).
\end{align*}
Since $\vs f$ is constant on the spheres $\s_x\subset\OO$,  $T_{\B,i}(\vs f)=\partial_{x_i}(\vs f)$. It follows that
\begin{align*}
\sC(\vs f)&=\sum_{i< j}(v_i(v_j x_i\partial_{x_j}(\vs f))+v_j(v_ix_j\partial_{x_i}(\vs f)))=
\sum_{i<j}v_i(v_j(x_i \partial_{x_j}-x_j\partial_{x_i})\vs f)\\
&=-\Gamma_M(\vs f)=0,
\end{align*}
where the last equality follows again by the constancy of $\vs f$ on the spheres $\s_x$. In a similar way, we also get $\sC(f'_s)=0$ and we conclude that $f$ belongs to the kernel of the Casimir operator $\sC$. Since $\sC$ and $r$ anticommute each other, it also holds $\tGamma f=-r(\sC f)=0$. 
\end{proof}


For any hypercomplex basis $\B=(1,v_1,\ldots,v_n)$ of $M$, with coordinates $x_0,\ldots,x_n$, every homogeneous polynomial with coefficients in $\Aa$ in the $n+1$ real variables $x_0,\ldots, x_n$ define a smooth function on $M$. We will denote by $\Aa_k[x_0,\ldots,x_n]$ the space of $k$-homogeneous polynomials with coefficients in $\Aa$.

\begin{proposition}\label{pro:slicepoly}
Assume $\sum_{i=1}^nk_i=(1-n)/2$. Let $f\in \Aa_k[x_0,\ldots,x_n]$ be a $k$-homogeneous polynomial. If $\sC f=0$ or $\tGamma f=0$, then there exist $a_{\alpha,\beta}\in\Aa$ ($\alpha,\beta\in\nn$) such that
\begin{equation}\label{eq:f}
f=\sum_{\alpha+\beta=k}x^\alpha (x^c)^\beta a_{\alpha,\beta}.  
\end{equation}
In particular, $f\in\SL(M)$, i.e., it is a slice polynomial.
\end{proposition}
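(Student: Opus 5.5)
Since $\tGamma=\sC r$ and $r$ is invertible, $\tGamma f=0$ is equivalent to $\sC(rf)=0$. Moreover $rf(x)=f(x^c)$ is again $k$-homogeneous, and if $rf$ has the form \eqref{eq:f} then so does $f=r(rf)$, after exchanging $x$ and $x^c$. So it suffices to treat the case $\sC f=0$. By \eqref{casimir}, this condition reads $\x\,\DD_\B f+\Eu f=0$.

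The plan is to peel off the $x_0$-dependence and reduce to polynomials on $M\cap\ker(t)$. Write $f=\sum_{j=0}^k x_0^{j} g_j$, where $g_j$ is a $(k-j)$-homogeneous polynomial in $x_1,\ldots,x_n$ with coefficients in $\Aa$. The operators $\x$, $\DD_\B$ and $\Eu$ all commute with multiplication by $x_0$ and preserve $x_0$-degree, so $\sC g_j=0$ for every $j$. We are then reduced to a polynomial $g$ in $x_1,\ldots,x_n$ alone, homogeneous of degree $m$, with $\x\,\DD_\B g=-m\,g$.

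The key step is to decompose $g$ with respect to the reflection $r$ (which here is $x\mapsto x^c$ on $M\cap\ker(t)$) into $r$-even and $r$-odd parts, $g=g^++g^-$, and to show that
\[
g^+=c_+\,\|\x\|^{m}\quad\text{and}\quad g^-=\x\,c_-\,\|\x\|^{m-1}
\]
for constants $c_\pm\in\Aa$. The parity of $m$ forces one of the two to vanish. This decomposition is compatible with the equation: by \eqref{eq:r}, $\sC$ anticommutes with $r$, so it maps $r$-even functions to $r$-odd ones and vice versa, and hence $\sC g^\pm=0$ separately.

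To get the form of $g^\pm$, I would use the Fischer/Dunkl-harmonic decomposition for the $\mathfrak{osp}(1|2)$ realization of Proposition~\ref{pro:orsted}. Every homogeneous polynomial decomposes as $\sum_\ell \x^\ell M_{m-\ell}$ with $M_{m-\ell}$ Dunkl monogenic. Using $\{\x,\DD_\B\}=-2(\Eu+\gamma)$ with $\gamma=1/2$, the operator $\sC$ acts on $\x^\ell M_p$ as a scalar multiple $\lambda_{\ell,p}$ times $\x^\ell M_p$, possibly composed with $r$. One computes $\lambda$ explicitly (for $\ell=0$ it is $p$, and in general it is of the form $\pm(p+\ldots)$), and checks that it vanishes only when $p=0$. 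Hence $\ker\sC$ on homogeneous polynomials is spanned by $\x^m a$, that is, by $\|\x\|^{m}a$ or $\x\|\x\|^{m-1}a$.

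\textbf{Main obstacle.} The decomposition requires that the Dunkl–Fischer inner product be nondegenerate for these multiplicities. Since $\kappa=(1-n)/2$ is negative, this can fail, so instead I would argue directly by induction on $m$ using $\DD_\B$: apply $\DD_\B$ to the equation, use $\{\sC,\DD_\B\}=0$ to get $\sC(\DD_\B g)=0$, so that by induction $\DD_\B g=\x^{m-1}b$. Then recover $g$ from $-mg=\x\,\DD_\B g=\x^m b$. This inductive route avoids the decomposition entirely; it works provided $m\neq0$, and the case $m=0$ is trivial.

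Finally, recombine: $f=\sum_j x_0^j\x^{k-j}a_j$. Since $x_0=(x+x^c)/2$ and $\x=(x-x^c)/2$ commute, expanding gives exactly $\sum_{\alpha+\beta=k}x^\alpha(x^c)^\beta a_{\alpha,\beta}$. Products of powers of $x$ and $x^c$ multiplied on the right by a coefficient are well defined in the alternative setting, because $x$ and $x^c$ generate an associative, commutative subalgebra. Such polynomials are slice, as recalled in Section~\ref{sec:slice}.
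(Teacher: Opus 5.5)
Your proposal is correct and, once you drop the Fischer-decomposition detour in favour of the induction in your ``Main obstacle'' paragraph, it is essentially the paper's argument. You split off the powers of $x_0$, use $\sC(\DD_\B g)=-\DD_\B(\sC g)=0$ and the $(m-1)$-homogeneity of $\DD_\B g$ to iterate $m\,g=-\x\,\DD_\B g$ down to $g=\x^m a$, and then substitute $x_0=(x+x^c)/2$ and $\x=(x-x^c)/2$; your even/odd splitting is superfluous, since a polynomial homogeneous of degree $m$ in $x_1,\ldots,x_n$ already satisfies $rg=(-1)^m g$.
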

\begin{proof}
Assume that $\sC f=0$. Write $f$ as $f=x_0^kQ_0+\cdots+x_0^{k-\ell}Q_\ell+\cdots+Q_k$, where $Q_\ell$ is a $\ell$-homogeneous polynomial in $x_1,\ldots,x_n$. 
Since $\sC$ is a 0-degree operator that preserve homogeneity w.r.t.\ $x_1,\ldots,x_n$, it holds $0=\sC f=\sum_{\ell=0}^kx_0^{k-\ell}\sC Q_\ell$ if and only if $\sC Q_\ell=0$ for every $\ell=0,\ldots,k$. 
Using \eqref{casimir}, the equality $\sC Q_\ell=0$ is equivalent to
\[
\Eu(Q_\ell)=\ell Q_\ell=-\x\,\DD_\B Q_\ell.
\]
Since $\sC(\DD_\B Q_\ell)=-\DD_\B(\sC Q_\ell)=0$ and $\DD_\B Q_\ell$ is $(\ell-1)$-homogeneous (see \cite[Lemma\ 2.9]{Rosler}), we also have 
\[
(\ell -1)\DD_\B Q_\ell=-\x\, \DD_\B^2 Q_\ell, 
\]
and then $\ell(\ell-1)Q_\ell=(-\x)^2\DD_\B^2 Q_\ell$. Iterating this computation, we finally get
\[
\ell!Q_\ell=(-\x)^\ell\DD_\B^\ell Q_\ell=\x^\ell a\text{\quad for a constant }a\in\Aa.
\]
Therefore $f$ has the form
\[
f=x_0^ka_0+\cdots +x_0^{k-\ell}\x^\ell a_\ell+\cdots+\x^k a_k.
\]
Substituting $(x+x^c)/2$ for $x_0$ and $(x-x^c)/2$ for $\x$, we see that $f$ can be written as
\[
f=\sum_{\alpha+\beta=k}x^\alpha (x^c)^\beta a_{\alpha,\beta}
\]
for some $a_{\alpha,\beta}\in\Aa$. 
The statement for $\tGamma$ follows from the equality of the kernels of $\tGamma$ and $\sC$, a consequence of \eqref{eq:r}. 
\end{proof}

The previous results allow us to extend to any hypercomplex subspace the characterization of sliceness for polynomials on Clifford algebras obtained  in \cite[Proposition 3.3]{binosi2025dunklapproachsliceregular}. We observe that here we do not need any condition of positivity or non-singularity for the multiplicities $k_i$ (we refer to \cite{binosi2025dunklapproachsliceregular} for this concept). The unique assumption we need is that $\kappa=\sum_{i=1}^nk_i=(1-n)/2$. 

\begin{theorem}\label{teo:poly_sliceness}
Assume that the Dunkl multiplicities $k_i\in\rr$ satisfy $\sum_{i=1}^nk_i=(1-n)/2$. Let $f\in \Aa[x_0,\ldots,x_n]$ be a 
polynomial function with coefficients in $\Aa$. Then the following conditions are equivalent:
\begin{itemize} 
  \item[(i)] $f\in\SL(M)$, a slice polynomial;
  \item[(ii)] $f$ is in the kernel of the Casimir operator $\sC$; 
  \item[(iii)] $f$ is in the kernel of the spherical Dunkl-Dirac operator $\tGamma$. 
\end{itemize}
\end{theorem}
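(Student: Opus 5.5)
The plan is to close the cycle of implications using the two preceding propositions, with a reduction to homogeneous components as the only extra ingredient.

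First, (i)$\Rightarrow$(ii). A slice polynomial $f\in\SL(M)$ is smooth on $M$, so it belongs to $\SL^1(M)$. Moreover $M$ is axially symmetric and invariant under all the reflections $r_i$. Proposition \ref{pro:kernels} then applies directly, under the standing assumption $\sum k_i=(1-n)/2$, and gives $\sC f=\tGamma f=0$. This proves (i)$\Rightarrow$(ii) and (i)$\Rightarrow$(iii) at once.

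Second, (ii)$\Leftrightarrow$(iii). By \eqref{eq:r}, $\sC$ anticommutes with $r$, so $\tGamma f=\sC(rf)=-r(\sC f)$. Since $r$ is invertible, $\ker\tGamma=\ker\sC$ on any $r$-invariant domain.

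Third, (ii)$\Rightarrow$(i). Write $f=\sum_{k}P_k$ with $P_k\in\Aa_k[x_0,\ldots,x_n]$. By \eqref{casimir}, $\sC=\x\,\DD_\B+\Eu$. The Euler operator preserves $k$-homogeneity. $\DD_\B$ lowers the degree by one: each $T_{\B,i}$ maps $k$-homogeneous polynomials to $(k-1)$-homogeneous ones, since $(f-r_if)/x_i$ is a polynomial of degree $k-1$ (as cited from \cite[Lemma 2.9]{Rosler}). Left multiplication by $\x$ raises the degree by one. Hence $\sC$ maps $\Aa_k[x_0,\ldots,x_n]$ into itself.

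Comparing homogeneous components, $\sC f=0$ is then equivalent to $\sC P_k=0$ for every $k$. Proposition \ref{pro:slicepoly} now gives $P_k=\sum_{\alpha+\beta=k}x^\alpha(x^c)^\beta a_{\alpha,\beta}$. Each such term is a slice function on $M$: it is the product $x^\alpha(x^c)^\beta$, induced by a stem function, times a right constant. Since $\SL(M)$ is a real vector space, the finite sum $f$ lies in $\SL(M)$.

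The only point needing care is the degree-preservation argument for $\sC$, in particular that the difference quotient $(f-r_if)/x_i$ of a homogeneous polynomial is again a homogeneous polynomial. This follows because $f-r_if$ contains only monomials that are odd in $x_i$, so the division by $x_i$ is exact and lowers the degree by one. Once that is in place, the rest is a direct combination of Propositions \ref{pro:kernels} and \ref{pro:slicepoly}.
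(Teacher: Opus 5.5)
Your proof is correct and is essentially the paper's argument. The paper splits $f=\sum_i x_0^iP_i$ and then splits each $P_i$ into homogeneous components in $x_1,\ldots,x_n$, whereas you split by total degree; in both cases $\sC$ respects the grading, and Propositions \ref{pro:kernels} and \ref{pro:slicepoly} finish the argument.

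One small correction, inherited from \eqref{eq:r}: $r$ anticommutes with both $\x$ and $\DD_\B$, so it commutes with $[\x,\DD_\B]$ and hence with $\sC$. (Compare the paper's own formula $\underline S_{\{i\}}=k_ir_i$, which commutes with $r_i$.) Thus $\tGamma f=r(\sC f)$ rather than $-r(\sC f)$, but the equality $\ker\tGamma=\ker\sC$ is unaffected.
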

\begin{proof}
Decompose $f\in \Aa[x_0,\ldots,x_n]$ as
\[
f=x_0^mP_m+x_0^{m-1}P_{m-1}+\cdots+x_0P_1+P_0
\]
with $P_i$ polynomials in $x_1,\ldots,x_n$. Then $\sC f=\sum_{i=0}^m x_0^{i}\sC P_i=0$ if and only if $\sC P_i =0$ for every $i$. Decompose every $P_i$ into homogeneous components $P_i=P_{i,0}+\cdots+P_{i,d_i}$ and observe that $\sC P_i =0$ if and only if $\sC P_{i,j}=0$ for every $j$. We can then apply Propositions \ref{pro:kernels} and \ref{pro:slicepoly} to obtain the thesis.  
\end{proof}

Theorem \ref{teo:poly_sliceness} and Proposition \ref{pro:slicepoly} show in particular that every (left) slice polynomial on $M$ is of the form 
$\sum_{\alpha,\beta}x^\alpha (x^c)^\beta a_{\alpha,\beta},$ with $a_{\alpha,\beta}\in\Aa$.
As in \cite{binosi2025dunklapproachsliceregular}, the result extends 
to real analytic functions on an axially symmetric open set $\OO\subseteq M$. 
Our aim is to extend the characterization of sliceness of Theorem \ref{teo:poly_sliceness} to any $C^1$ function. Let $\OO\subseteq M$ be axially symmetric. We set
\[
\wsC:=\sum_{i=1}^nx_i T_{\B,i}-\Eu,\quad \sC':=\tfrac12\wsC\left(1+r\right),
\quad \sC'':=\sC'\x.
\]
These operators act on functions $f\in C^1(\OO,\Aa)$ as follows:
\begin{equation}\label{eq:tildeS}
  \wsC f=\sum_{i=1}^nx_i (T_{\B,i}f-\partial_{x_i}f)=\sum_{i=1}^nk_i(f-r_if),
\end{equation}
\begin{equation}\label{eq:S12}
  \sC' f= \wsC(\vs f),\quad \sC'' f=\wsC(\vs{(\x f)}),
\end{equation}
where $\vs f=\frac12\left(f+rf\right)$ is the spherical value of $f$ (not necessarily slice) defined in \S\ref{sec:slice}. 

\begin{proposition}\label{pro:sliceoperators}
Let $f\in\SL^1(\OO)$ be a $C^1$ slice function. Then $f\in\ker\sC'\cap\ker\sC''$. 
\end{proposition}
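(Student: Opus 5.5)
We reduce the claim to the formulas \eqref{eq:tildeS} and \eqref{eq:S12}. These say that $\sC' f=\wsC(\vs f)$ and $\sC'' f=\wsC(\vs{(\x f)})$, and that $\wsC g=\sum_{i=1}^n k_i(g-r_ig)$ for any function $g$. So it suffices to prove one fact: if $g$ is a function on the axially symmetric set $\OO$ that is constant on every sphere $\s_x\cap M=\alpha+\beta\,\s_M$, then $g=r_ig$ for every $i=1,\ldots,n$. Then $\wsC g=0$.

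First, check the two identities in \eqref{eq:S12}. We have $\sC' f=\tfrac12\wsC(f+rf)$. Here $rf(x)=f(x^c)$, because $r$ changes the sign of all of $x_1,\ldots,x_n$, which is exactly conjugation on $M$. Hence $\tfrac12(f+rf)=\vs f$. Likewise $\sC''f=\sC'(\x f)=\wsC(\vs{(\x f)})$. These are direct from the definitions, and \eqref{eq:S12} is already stated in the paper.

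Second, invoke Remark \ref{rem:sliceness_continuity_M}. Since $f\in\SL^1(\OO)$, both $\vs f$ and $\vs{(\x f)}$ are constant on each sphere $\alpha+\beta\,\s_M$ with $x\in\OO\setminus\rr$. Each reflection $r_i$ maps a point $x=x_0+\sum_j x_jv_j$ to a point with the same real part $x_0$ and the same $\beta=\|\x\|$. Thus $r_i(x)$ lies on the same sphere $\s_x\cap M$. Because $\OO$ is axially symmetric, $r_i(x)\in\OO$, so $r_ig$ is defined on $\OO$. It follows that $g(r_i(x))=g(x)$ for $g=\vs f$ and $g=\vs{(\x f)}$ at every non-real $x$. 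At real points $x$ the reflection $r_i$ fixes $x$, so $g=r_ig$ holds trivially there.

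Therefore every term $k_i(g-r_ig)$ vanishes on all of $\OO$, and both $\sC' f=0$ and $\sC'' f=0$. The only point needing care is that \eqref{eq:tildeS} involves division by $x_i$ on the hyperplanes $x_i=0$. The expression $\sum_i k_i(g-r_ig)$ is already free of singularities, so no continuity argument is needed there. The main obstacle is therefore just confirming that the $r_i$ preserve the spheres and $\OO$, which is immediate from the description of $\s_M$ in coordinates.
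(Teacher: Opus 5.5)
Your proof is correct and matches the paper's argument: by Remark \ref{rem:sliceness_continuity_M}, $\vs f$ and $\vs{(\x f)}$ are constant on the spheres $\s_x\cap M$, hence $r_i$-invariant, so $\wsC$ annihilates them and \eqref{eq:S12} gives $\sC' f=\sC'' f=0$. Your checks that each $r_i$ preserves these spheres and fixes real points only make explicit what the paper leaves implicit.
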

\begin{proof}
If $f$ is slice, then $\vs f$ and $\vs{(\x f)}$ are constant on the spheres $\s_x\cap M$ for every $x\in\OO\setminus\rr$ (see Remark \ref{rem:sliceness_continuity_M}). Therefore if $g$ is $\vs f$ or $\vs{(\x f)}$, it holds $g=r_ig$ for every $i=1,\ldots,n$ and $\sC' f= \wsC(\vs f)=0$, $\sC'' f=\wsC(\vs{(\x f)})=0$.
\end{proof}

For the converse statement, we need all the three differential-difference operators $\sC$, $\sC'$ and $\sC''$, with non-positive multiplicities $k_i$. We can assume that $M$ has dimension $n+1>2$, since in the case $n=1$ every function on $\OO$ is a slice function (see \S\ref{sec:slice}).

\begin{theorem}\label{teo:C1_sliceness}
Let $n\ge2$. Assume that the Dunkl multiplicities $k_i$ are non-positive, with at most one vanishing, and that $\sum_{i=1}^nk_i=(1-n)/2$. Let $\OO\subseteq M$ be open and axially symmetric and $f\in C^1(\OO,\Aa)$. Then the following conditions are equivalent:
\begin{itemize} 
  \item[(i)] $f$ is a slice function on $\OO$;
  \item[(ii)] $f$ is in the kernel of the operators $\sC$, $\sC'$ and $\sC''$. 
\end{itemize}
\end{theorem}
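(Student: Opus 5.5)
The implication (i)$\Rightarrow$(ii) is already available: Proposition \ref{pro:kernels} gives $\sC f=0$ and Proposition \ref{pro:sliceoperators} gives $\sC'f=\sC''f=0$. For the converse, the plan is to reduce to Proposition \ref{pro:sliceness_continuity_M}. So it suffices to prove that $g_1:=\vs f$ and $g_2:=\vs{(\x f)}$ are constant on every sphere $\alpha+\beta\,\s_M$ contained in $\OO\setminus\rr$. We do this in two stages. First we show that each $g$ is invariant under every single reflection $r_i$, using $\sC'$ and $\sC''$. Then we show that each $g$ is annihilated by all the tangential operators $L_{ij}$, using $\sC$.

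\emph{Stage 1: reflection invariance.} By \eqref{eq:S12}, condition (ii) gives $\wsC g=\sum_i k_i(g-r_ig)=0$ for $g=g_1,g_2$. Moreover $rg=g$, since $r$ is an involution and both $g_1,g_2$ are of the form $\frac12(h+rh)$. Fix $x\in\OO$ and restrict $g$ to the orbit of $x$ under $\zz_2^n$, which is finite. Put $\lambda_i=-k_i\ge0$ and $\Lambda=\sum_i\lambda_i=(n-1)/2>0$. On each real component of $g$, the equation becomes $\sum_i\lambda_i R_i g=\Lambda g$, where $R_i$ is the permutation matrix of $r_i$ on the orbit. To eliminate the relation $rg=g$, we work on the quotient group $\zz_2^n/\langle r\rangle$, i.e.\ we view $g$ as a function on orbits of $\langle r\rangle$. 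There $A=\sum_i\lambda_iR_i$ is a nonnegative symmetric matrix with row sums $\Lambda$.

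\emph{Key step of Stage 1, and the main obstacle: irreducibility of $A$.} At most one $\lambda_i$ vanishes, say $\lambda_n$. Then $r_1,\dots,r_{n-1}$ together with $r=r_1\cdots r_n$ generate $\zz_2^n$, so the images of $r_1,\dots,r_{n-1}$ generate the quotient. The weighted Cayley graph is therefore connected and $A$ is irreducible. When the orbit is degenerate (some $x_i=0$), the same argument applies on the smaller orbit. By Perron--Frobenius, $\Lambda$ is the spectral radius and a simple eigenvalue, with the constants as eigenvectors. Hence $g$ is constant on each orbit, i.e.\ $r_ig=g$ for all $i$. This is exactly where the sign assumption and the ``at most one vanishing'' assumption enter.

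\emph{Stage 2: rotation invariance.} Since $\{\sC,r\}=0$ and $\{\sC,\x\}=0$, we get
\[
\sC g_1=\tfrac12(\sC f-r\sC f)=0,\qquad \sC(\x f)=-\x\,\sC f=0,
\]
and hence also $\sC g_2=0$. For a function $g$ that is invariant under every $r_i$, we have $T_{\B,i}g=\partial_{x_i}g$. The computation in the proof of Proposition \ref{pro:kernels} then yields
\[
0=\sC g=\sum_{i<j}v_i\bigl(v_j\,L_{ij}g\bigr).
\]
Since $g$ is even in every $x_\ell$, the function $L_{ij}g$ is odd in $x_i$ and in $x_j$ and even in the other coordinates. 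Distinct pairs $\{i,j\}$ therefore carry distinct characters of $\zz_2^n$. Projecting the identity onto the character $\chi_i\chi_j$, i.e.\ averaging against the reflections, isolates the single term $v_i(v_jL_{ij}g)=0$. Left-multiplying by $v_j$ and then $v_i$ (using $v(va)=-a$), we conclude $L_{ij}g=0$ for all $i,j$. Since $n\ge2$, each sphere $\alpha+\beta\,\s_M$ is connected and the $L_{ij}$ span its tangent directions. Hence $g_1,g_2$ are constant on these spheres, and Proposition \ref{pro:sliceness_continuity_M} concludes that $f\in\SL(\OO)$.
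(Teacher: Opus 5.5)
Your proof is correct, and its skeleton is the paper's. You reduce to Proposition~\ref{pro:sliceness_continuity_M}, obtain $r_i$-invariance of $\vs f$ and $\vs{(\x f)}$ from $\wsC$ via Perron--Frobenius, and then use $\sC g=-\Gamma_M g=0$.

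Your Stage~1 is Lemma~\ref{PerronFrobenius} in a cleaner form. Passing to $\zz_2^n/\langle r\rangle$ is exactly the paper's restriction to the $2^{n-1}$ points $\bar x^H$ with $i_0\notin H$. Because you argue directly on arbitrary (possibly degenerate) orbits, you also avoid the density-and-continuity step the paper uses for points with some $x_i=0$.

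The genuine difference is Stage~2, which replaces Lemma~\ref{lemma:KerGammaM}. The paper fixes two points $y,z$ on a sphere and switches to a hypercomplex basis with $y,z\in\langle 1,v_1,v_2\rangle$. It then invokes the basis-independence of $\Gamma_M$ and isolates $v_1(v_2L_{12}g)$ using $\partial_{x_i}g(\bar x^j)=\pm\partial_{x_i}g(x)$ in the coordinates of that new basis. That step tacitly requires reflection invariance of $g$ with respect to a basis other than $\B$. Stage~1, and the operators $\wsC$, $\sC'$, $\sC''$ themselves, only give invariance for $\B$.

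Your projection of $\sum_{i<j}v_i(v_jL_{ij}g)=0$ onto the characters of $\zz_2^n$ stays in the fixed basis and uses only the invariance you actually proved. It yields $L_{ij}g=0$ for every pair at once, and connectedness of $\alpha+\beta\,\s_M$ for $n\ge2$ then finishes the argument.

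So the paper's route showcases the basis-independence of the spherical Dirac operator. Yours is shorter, does not need that independence, and is the more robust of the two.
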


In order to prove the theorem, we begin with two Lemmas. Before their statements we introduce a notation for the composition of reflections.

Given $x\in M$ and a subset $H$ of $[n]:=\{1,\ldots,n\}$, we denote by $\bar x^H$ the image under $L$ of the $(n+1)$-tuple obtained by $(x_0,\ldots,x_n)$ by reflecting the components in $H$. The $i$-th component of $L^{-1}(\bar x^H)$ is $-x_i$ if $i\in H$, $x_i$ otherwise. If $H=\{i\}$, we also write $\bar x^i$ instead of $\bar x^{\{i\}}$. In particular, it holds $\bar x^\emptyset=x$, $\bar x^{[n]}=x^c$ and $\bar x^{H^c}=(\bar x^H)^c$ for every $H\subseteq [n]$, where $H^c=[n]\setminus H$. 

\begin{lemma}\label{PerronFrobenius}
Let $n\ge2$. Assume that the multiplicities $k_i$ are non-positive, with at most one vanishing. Let $\OO\subseteq M$ be 
$\zz_2^n$-invariant (i.e., $\bar x^i\in\OO$ for every $x\in\OO$ and every $i=1,\ldots,n$) and let $g:\OO\to\Aa$ be a function invariant w.r.t.\ conjugation $x\mapsto x^c$. If $\wsC g=0$, then
$g=r_ig$ for every $i=1,\ldots,n$, i.e., $g$ is $\zz_2^n$-invariant.
\end{lemma}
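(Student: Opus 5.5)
Fix a point $x\in\OO$ and consider the finite orbit $\{\bar x^H : H\subseteq[n]\}$, which lies in $\OO$ by $\zz_2^n$-invariance. The plan is to turn the condition $\wsC g=0$ into a linear system on the $2^n$ values $g_H:=g(\bar x^H)$ and then apply Perron--Frobenius. By \eqref{eq:tildeS}, evaluating $\wsC g=0$ at every point $\bar x^H$ of the orbit gives
\[
\kappa\, g_H=\sum_{i=1}^n k_i\, g_{H\triangle\{i\}}\qquad\text{for every } H\subseteq[n],
\]
where $\triangle$ is symmetric difference. The same relations hold componentwise, since $\wsC$ acts on each real component of $g$ separately. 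Thus each real component vector $u\in\R^{2^n}$ satisfies $Ku=\kappa u$, where $K$ is the matrix on the hypercube graph with entry $k_i$ on the edges in direction $i$. Setting $c_i=-k_i\ge0$ and $c=\sum_i c_i=-\kappa=(n-1)/2>0$, this becomes $Cu=c\,u$ with $C$ non-negative and symmetric.

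The key observation is that every row of $C$ sums to $c$. Hence the constant vector is a positive eigenvector with eigenvalue $c$, and $c$ equals the spectral radius of $C$. The matrix $C$ is irreducible, because its graph is the hypercube $\{0,1\}^n$ with the edges of direction $i$ present iff $c_i>0$. Since at most one $c_i$ vanishes, at least $n-1$ directions are present.

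Irreducibility is where the main obstacle lies. If all $c_i>0$, the graph is the full hypercube, which is connected. If exactly one $c_{i_0}=0$, the graph splits into two components $\{H: i_0\in H\}$ and $\{H: i_0\notin H\}$, and $C$ is not irreducible. This is where the hypothesis that $g$ is invariant under conjugation enters: $g_H=g_{H^c}$, and the map $H\mapsto H^c$ exchanges the two components. Each component is itself a connected $(n-1)$-cube, since $n\ge2$ and all other $c_i>0$. By Perron--Frobenius applied to the irreducible symmetric block of each component, the eigenspace for the spectral radius $c$ is spanned by the constant vector on that component. So $u$ is constant on each component, and conjugation invariance identifies the two constants.

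In the connected case, Perron--Frobenius for the irreducible non-negative matrix $C$ directly gives that the $c$-eigenspace is one-dimensional, spanned by the all-ones vector. Hence $u$ is constant. In either case $g$ is constant on every orbit, so $g(\bar x^i)=g(x)$, i.e.\ $r_ig=g$ for all $i$.

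One detail remains to check: the eigenvalue equation uses $\kappa$ only through its negativity $c>0$, and the row-sum identity is automatic. So the specific value $(1-n)/2$ plays no role in this lemma beyond ensuring $c>0$.
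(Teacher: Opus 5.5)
Your proof is correct, and it rests on the same core idea as the paper. The relation $\kappa\,g=\sum_i k_i\,r_ig$ makes the values of $g$ on a $\zz_2^n$-orbit an eigenvector, for the spectral radius, of a non-negative symmetric matrix supported on a hypercube graph, and Perron--Frobenius forces those values to be constant.

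The organization differs. The paper quotients by conjugation first: it fixes $i_0$ with $k_i<0$ for $i\ne i_0$, keeps only the $2^{n-1}$ points $\bar x^H$ with $i_0\notin H$, and folds the direction-$i_0$ edges through $\bar x^H\mapsto(\bar x^H)^c$. This yields one irreducible doubly stochastic matrix whether or not some $k_i$ vanishes. The cost is a separate computation for $n=2$, plus a restriction to the dense set $\widetilde\OO$ of points with all $x_i\ne0$, followed by a continuity argument.

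You instead work on the full $2^n$-vertex cube, indexed by subsets $H$ rather than by points. You use conjugation invariance only at the end, to glue the two $(n-1)$-subcubes when one multiplicity vanishes. This gains three things. First, $n=2$ needs no special treatment. Second, the argument is pointwise at every $x\in\OO$, so no continuity of $g$ is needed. The lemma as stated does not assume continuity, while the paper's final step tacitly does; this is harmless in the application, where $g$ is $\vs f$ or $\vs{(\x f)}$ with $f\in C^1$. Third, it shows that conjugation invariance is superfluous when all $k_i<0$.

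One cosmetic point: the lemma does not assume $\kappa=(1-n)/2$. Simply write $c=-\kappa>0$, which follows from $k_i\le0$ with at most one vanishing and $n\ge2$; your closing remark essentially acknowledges this.
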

\begin{proof}
Given a point $x\in\OO$, consider the reflected points 
\[
\bar x^i=L(x_0,x_1,\ldots,-x_i,\ldots,x_n)\in\OO,\quad\text{$i=1,\ldots,n$}, 
\]
where $L$ is the isomorphism $\rr^{n+1}\simeq M$ associated to the hypercomplex basis $\B$. We must show that $g(x)=g(\bar x^i)$ for every $i$. Since $g\in\ker \wsC$, thanks to \eqref{eq:tildeS}, it holds
\[
g\sum_{i=1}^n k_i=\sum_{i=1}^nk_i \,r_i g.
\] 
Equivalently, for every $x\in\OO$ it holds
\begin{equation}\label{eq:g}
  g(x)=\sum_{i=1}^n\alpha_i\, g(\bar x^i),\quad\text{where\quad}\alpha_i:=\tfrac{k_i}{\sum_{j=1}^n k_j}.
\end{equation}
The $\alpha_i$'s are all positive except at most one and their sum is 1.

If $n=2$, equality \eqref{eq:g} evaluated at points $\bar x^1$ and $\bar x^2$ of $\OO$ yields
\begin{align*}
g(\bar x^1)&=\alpha_1 g(x)+\alpha_2 r_2 g(\bar x^1)=\alpha_1 g(x)+\alpha_2 g(x^c)=g(x),\\
g(\bar x^2)&=\alpha_1 r_1g(\bar x^2)+\alpha_2 g(x)=\alpha_1 g(x^c)+\alpha_2 g(x)=g(x),
\end{align*}
and the thesis is proved. 

Assume now $n>2$. Let $\widetilde\OO=\{x\in\OO\;|\; x_i\ne0 \textrm{ for all }i=1,\ldots,n\}$ and let $x\in\widetilde\OO$ be fixed.  
Let $i_0\in\{1,\ldots,n\}$ be such that $\alpha_{i}>0$ for every $i\ne i_0$. 
Consider the set $\mathcal V$ of $2^{n-1}$ points of $\OO$ defined as follows
\[
\mathcal V=\left\{\bar x^H\;\big|\;H\subseteq [n],\ H\not\ni i_0\right\}.
\]
Note that for every $H$, exactly one of $\bar x^H$ and $(\bar x^{H})^c$ belongs to $\mathcal V$. 
Equality \eqref{eq:g} evaluated at the $2^{n-1}$ points $\bar x^H\in\mathcal V$ gives 
\begin{equation}\label{eq:system}
g(\bar x^H)-\sum_{i=1}^n\alpha_i g(\bar x^{H\Delta\{i\}})=0,  
\end{equation}
where $H\Delta K$ is the symmetric difference of two sets $H,K$. 
If $\bar x^{H\Delta\{i\}}\not\in\mathcal V$, we can replace the image $g(\bar x^{H\Delta\{i\}})$ in \eqref{eq:system} with the equal image $g((\bar x^{H\Delta\{i\}})^c)$, and $(\bar x^{H\Delta\{i\}})^c\in\mathcal V$.

From \eqref{eq:system} we see that the $2^{n-1}$-tuple obtained by $\mathcal V$ after fixing an ordering of its elements forms an $\Aa$-valued solution of a homogeneous system of $2^{n-1}$ linear equations with a real coefficients matrix of the form $I-A$, where $I$ is the identity matrix and every row of $A$ contains $\alpha_1,\ldots,\alpha_n$ together with $2^{n-1}-n$ zeroes. 

We prove that $A=[a_{hk}]$ is a symmetric matrix. If $\bar x^H$ is the $h$-th element in the ordering of $\mathcal V$, and $\bar x^{H\Delta\{i\}}$ (or its conjugated $(\bar x^{H\Delta\{i\}})^c$ if $\bar x^{H\Delta\{i\}}\not\in\mathcal V$) is the $k$-th element, then $a_{h,k}=\alpha_i$. But it also holds 

\begin{equation}\label{eq:system2}
g(\bar x^{H\Delta\{i\}})-\sum_{i=1}^n\alpha_i g(\bar x^{H})=0,  
\end{equation}
since $(H\Delta\{i\})\Delta\{i\}=H$, and then $a_{kh}=\alpha_i=a_{hk}$. 

So $A$ is a real, symmetric, doubly stochastic matrix of order $2^{n-1}$. We want to show that $A$ is irreducible. 
This is equivalent to prove that the graph with adjacency matrix $B$ obtained by $A$ setting $\tilde b_{hk}=1$ if $a_{hk}\ne0$ and $b_{hk}=0$ if $a_{hk}=0$, is strongly connected (see e.g.\ \cite{MeyerBook}). 
The $h$-th and $k$-th nodes (corresponding to $\bar x^H$ and $\bar x^{H\Delta\{i\}}$ (or its conjugated $(\bar x^{H\Delta\{i\}})^c$) respectively) of the graph are connected by an edge if and only if $a_{hk}=\alpha_i\ne0$. 
If $H,K\subseteq[n]\setminus\{i_0\}$, there exist a sequence of edges leading from the node corresponding to $\bar x^H\in\mathcal V$ to the node corresponding to $\bar x^{H\Delta K}\in\mathcal V$. Since every $H'\subseteq[n]\setminus\{i_0\}$ can be written as $H'=H\Delta(H'\Delta H)$, this means that the graph is strongly connected.

The $2^{n-1}$-tuple with elements $g(\bar x^H)$ with $H\in\mathcal V$ is an eigenvector of $A$ with eigenvalue 1.  The Perron-Frobenius Theorem (see e.g.\ \cite{MeyerBook}) applied to $A$ implies that this eigenvector is a multiple of $(1,1,\ldots,1)$. In particular, $g(x)=g(\bar x^\emptyset)=g(\bar x^i)$ for every $i=1,\ldots,n$, $i\ne i_0$. Moreover, $g(\bar x^{i_0})=g(\bar x^{\{i_0\}^c})=g(x)$. Since $\widetilde\OO$ is dense in $\OO$, by continuity the thesis is proved.
\end{proof}

\begin{lemma}\label{lemma:KerGammaM}
Let $n\ge2$. Let $\OO\subseteq M$ be open and $\zz_2^n$-invariant and let $g:\OO\to\Aa$ be a $C^1$ function such that $g=r_ig$ for every $i=1,\ldots,n$. If $\Gamma_M g=0$, then  for every $x\in\OO\setminus\rr$, $g$ is locally constant on the set $\s_x\cap\OO$.
\end{lemma}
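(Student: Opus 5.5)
The plan is to show that every tangential derivative of $g$ along the sphere $\s_x\cap M=\alpha+\beta\,\s_M$ vanishes. Local constancy on $\s_x\cap\OO$ then follows, since that set is a relatively open subset of a smooth sphere of dimension $n-1\ge1$. The tangent space at a point of the sphere is spanned by the vector fields $L_{ij}=x_i\partial_{x_j}-x_j\partial_{x_i}$. So it suffices to prove that $L_{ij}g=0$ for all $i<j$ on $\OO\setminus\rr$, because the $x_0$-coordinate is constant on the sphere.

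The key step is to exploit the $\zz_2^n$-invariance of $g$ to separate the components of $\Gamma_M g=-\sum_{i<j}v_i(v_jL_{ij}g)$. Fix a pair $i<j$ and look at how $L_{ij}g$ transforms under the reflections $r_\ell$. Since $g=r_\ell g$ for every $\ell$, we get $\partial_{x_m}g$ odd in $x_m$ and even in the other $x_\ell$, $\ell\ne m$, $\ell\ge1$. Hence $x_i\partial_{x_j}g$ and $x_j\partial_{x_i}g$ are both odd in $x_i$, odd in $x_j$, and even in every other $x_\ell$ with $\ell\ge1$. Therefore $h_{ij}:=L_{ij}g$ has parity type $\{i,j\}$: it changes sign under $r_i$ and under $r_j$, and is invariant under $r_\ell$ for $\ell\ne i,j$. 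Distinct pairs $\{i,j\}$ give distinct parity types, i.e.\ distinct characters of $\zz_2^n$. The multiplication by the constants $v_i(v_j\,\cdot)$ acts on values and commutes with the reflections. So the isotypic projector $P_{ij}=2^{-n}\sum_{\sigma}\chi_{ij}(\sigma)\,\sigma$ for the character $\chi_{ij}$ satisfies $P_{ij}(\Gamma_M g)=-v_i(v_jh_{ij})$. Applying $P_{ij}$ requires only the $\zz_2^n$-invariance of $\OO$. From $\Gamma_M g=0$ we then obtain $v_i(v_jh_{ij})=0$. Multiplying on the left by $v_j$ and then by $v_i$, and using $v_\ell(v_\ell a)=-a$, gives $h_{ij}=0$.

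The only points needing care are the following. First, $\Gamma_M=\Gamma_\B$ must be computed in the basis $\B$ defining the reflections; this holds by Definition \ref{def:GammaM}. Second, the parity bookkeeping must be correct: one has to check that $r_\ell\partial_{x_m}=\pm\partial_{x_m}r_\ell$, with a minus sign exactly when $\ell=m$. Third, the projection argument must be pointwise valid. It is, because the character decomposition is a finite linear combination of $f\circ\sigma$ evaluated at the reflected points, which lie in $\OO$. I expect the main obstacle to be the last geometric step. The vanishing of all $L_{ij}g$ on $\OO\setminus\rr$ means that $g$ has zero differential along each sphere $\s_x\cap\OO$. Hence $g$ is constant on each connected component of $\s_x\cap\OO$, which is exactly local constancy. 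If $\s_x\cap\OO$ is not connected, this gives only local, not global, constancy, which is why the statement is phrased that way.
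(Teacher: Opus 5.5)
Your argument is correct, and for $n>2$ it takes a genuinely different route from the paper. For $n=2$ both reduce to $\Gamma_Mg=-v_1(v_2L_{12}g)$.

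The paper fixes $y\in\OO\setminus\rr$ and a second point $z$ in the same component of $\s_y\cap\OO$. It passes to a new hypercomplex basis with $y,z\in\langle 1,v_1,v_2\rangle$, which is where the basis-independence of $\Gamma_M$ is essential, and restricts $\Gamma_Mg$ to that plane. It then removes the terms containing $\partial_{x_\ell}g$, $\ell\ge3$, by comparing $\Gamma_Mg(x)$ with $\sum_{\ell\ge3}\Gamma_Mg(\bar x^\ell)$. This gives $L_{12}g=0$ on the plane, and hence $g(y)=g(z)$ along a circle arc.

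You instead never leave the fixed basis $\B$. The character decomposition of $\zz_2^n$ separates the summands $v_i(v_jL_{ij}g)$, so every $L_{ij}g$ vanishes identically on $\OO$ at once. You then finish with the standard fact that the $L_{ij}$ span the tangent spaces of the spheres.

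Your route is more economical and yields the stronger statement $L_{ij}g\equiv0$ for all $i<j$. It also uses the reflection invariance exactly in the basis where it is assumed. The paper's computation, after the change of basis, relies on $g=r_\ell g$ for the reflections of the \emph{adapted} basis. Since $\bar x^\ell=x$ on the plane, what it really needs is $\partial_{x_\ell}g=0$ there. The hypothesis, formulated for the fixed $\B$, does not directly supply this, so your approach sidesteps that point.

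One trivial detail: to cancel $v_i(v_jh_{ij})=0$ using only $v_\ell(v_\ell a)=-a$, multiply first by $v_i$ and then by $v_j$. The order you state also works, but it needs \eqref{eq:anticommute}.
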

\begin{proof}
If $n=2$, $\Gamma_M g=0$ is equivalent to $v_1(v_2 L_{12}g)=0$ or $L_{12}g=0$. Since $L_{12}$ is a generator for the tangential differential operators on the circles $\s_x\cap M$, $x\in\OO\setminus\rr$, we obtain the thesis.

Let $n>2$. Since $g$ is $\zz_2^n$-invariant, it holds $g(x)=g(\bar x^i)$ for every $x\in\OO$ and $i=1,\ldots,n$, where $\bar x^i$ is the reflected point already defined in the proof of Lemma \ref{PerronFrobenius}. 
Let $y\in\OO\setminus\rr$ and choose $z$ in the connected component of $\s_y\cap\OO$ containing $y$, with $z\ne y,y^c$. Let $\B=(1,v_1,v_2,\ldots)$ be a hypercomplex basis of $M$ such that $y,z\in\langle 1,v_1,v_2\rangle$. Since $x_j=0$ on $\langle 1,v_1,v_2\rangle$ for every $j>2$, it holds\
\begin{equation}\label{eq:sums}
(\Gamma_M g)_{|\langle 1,v_1,v_2\rangle}=(\Gamma_\B g)_{|\langle 1,v_1,v_2\rangle}=
-v_1(v_2 L_{12}g))-\sum_{j=3}^n(v_1(x_1v_j\partial_{x_j}g)-v_j(x_2v_2\partial_{x_j}g)).
\end{equation}
The $\zz_2^n$-invariance of $g$ implies that
\[
\partial_{x_i}g(\bar x^j)=
\begin{cases}\partial_{x_i}g(x)&\text{\quad if }i\ne j,\\-\partial_{x_j}g(x)&\text{\quad if }i=j.\end{cases}
\]
It follows that
\begin{align*}
\Gamma_M g(\bar x^\ell)&=-v_1(v_2 L_{12}g(x)))-\sum_{j\ne\ell,j=3}^n(v_1(x_1v_j\partial_{x_j}g(x))-v_j(x_2v_2\partial_{x_j}g(x)))\\
&\quad+(v_1(x_1v_\ell\partial_{x_\ell}g(x))-v_\ell (x_2v_2\partial_{x_\ell}g(x)))
\end{align*}
for every $\ell=3,\ldots,n$. Let $s$ denote the last sum in \eqref{eq:sums} evaluated at $x\in\OO\cap\langle 1,v_1,v_2\rangle$. Then
\begin{align*}
0&=\Gamma_M g(x)=-v_1(v_2 L_{12}g(x)))-s\text{\quad and}\\
0&=\sum_{\ell=3}^n\Gamma_M g(\bar x^\ell)=-(n-2)v_1(v_2 L_{12}g(x)))-(n-3)s,
\end{align*}
which yields $v_1(v_2 L_{12}g(x)))=0$. Then $L_{12}g$ vanishes on $\OO\cap\langle 1,v_1,v_2\rangle$. Since $L_{12}$ is a generator for the tangential differential operators on the circles $\s_x\cap \langle 1,v_1,v_2\rangle$, $x\in\OO\setminus\rr$, we obtain that $g(y)=g(z)$, i.e., $g$ is locally constant on $\s_x\cap\OO$. 
\end{proof}

In the previous proof it is crucial that the spherical Dirac operator $\Gamma_M$ does not depend on the hypercomplex basis $\B$ used in its definition. 

\begin{proof}[Proof of Theorem \ref{teo:C1_sliceness}]
The implication $(\textrm{i})\Rightarrow(\textrm{ii})$ follows from Propositions \ref{pro:kernels} and \ref{pro:sliceoperators}. 
Conversely, assume that $\sC f=\sC' f=\sC''f=0$. Let $g=\vs f$ or $g=\vs{(\x f)}$. Then $g$ satisfies the assumptions of Lemma \ref{PerronFrobenius}. Therefore $g$ is $\zz_2^n$-invariant and then $T_{\B,i}g=\partial_{x_i}g$ for $i=1,\ldots,n$ and $\DD_\B g=\sum_{i=1}^nv_i\partial_{x_i}g$. Using \eqref{casimir} and \eqref{eq:anticommute}, it follows that 
\begin{align*}
\sC g&=\x\,\DD_\B g+\Eu g=\sum_{i=1}^nx_iv_i\left(\sum_{j=1}^nv_j \partial_{x_j}g\right)+\sum_{i=1}^nx_i\partial_{x_i}g\\
&=\sum_{i< j}(x_i v_i(v_j \partial_{x_j}g)+x_jv_j(v_i\partial_{x_i}g))
=\sum_{i<j}v_i(v_j(x_i \partial_{x_j}-x_j\partial_{x_i})g)=-\Gamma_M g.
\end{align*}
Using the commutation relations \eqref{eq:commutation} and \eqref{eq:r}, when $g=\vs f$, we get
\[
2\Gamma_M(\vs f)=-2\sC(\vs f)=-\sC\left(f+rf\right)=-\sC f+ r\sC f=0.
\]
When $g=\vs{(\x f)}$, we obtain also $\Gamma_M(\vs{(\x f)})=0$. 
We can then apply Lemma \ref{lemma:KerGammaM} and get that $\vs f$ and $\vs{(\x f)}$ are constant on every sphere $\s_x\cap M$, with $x\in \OO\setminus\rr$. 
Thanks to Proposition \ref{pro:sliceness_continuity_M}, this means that $f$ is a slice function on $\OO$.
\end{proof}

\subsection{Dunkl-Dirac monogenicity and slice regularity} 
\label{sub:dirac_dunkl_monogenicity_and_slice_regularity}

Let $\B=(1,v_1,\ldots,v_n)$ be a hypercomplex basis of $M$, with coordinates $x_0,\ldots,x_n$. 

\begin{proposition}\label{pro:equality}
Assume that the Dunkl multiplicities have sum $\kappa=\sum_{i=1}^nk_i=(1-n)/2$. Let $\OO\subseteq M$ be open and axially symmetric and $f\in C^1(\OO,\Aa)$. If $\sC f=0$, then $D_\B f=\difbarM f$. 
\end{proposition}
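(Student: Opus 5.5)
The plan is to use formula \eqref{casimir} from Proposition \ref{pro:operators}(ii), which holds under the assumption $\kappa=(1-n)/2$ and gives $\sC=\x\,\DD_\B+\Eu$. If $\sC f=0$, then on $\OO\setminus\rr$ we have $\x\,\DD_\B f=-\Eu f$. The next step is to multiply on the left by $\x^{-1}$. This needs care in the nonassociative setting: we need $\x^{-1}(\x\, a)=a$ for $a\in\Aa$. Since $\x\in M\subseteq\Q_\Aa$, we have $\x^{-1}=-\x/\|\x\|^2$, and $\x(\x a)=\x^2a=-\|\x\|^2a$ by alternativity (the associator $[\x,\x,a]$ vanishes). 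Hence $\x^{-1}(\x a)=a$, and we obtain
\[
\DD_\B f=-\x^{-1}\Eu f \quad\text{on } \OO\setminus\rr .
\]

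Adding $\partial_{x_0}f$ to both sides and recalling the definitions $D_\B=\partial_{x_0}+\DD_\B$ and $\difbarM=\partial_{x_0}-\x^{-1}\Eu$ from \eqref{eq:theta} and Definition \ref{def:thetaM}, we get $D_\B f=\difbarM f$ on $\OO\setminus\rr$. This is exactly the setting in which $\difbarM$ is defined, so the statement is proved there.

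The only possible obstacle is interpreting the statement on the real points. The operator $\difbarM$ is defined only on $\OO\setminus\rr$, whereas $D_\B f$ is continuous on all of $\OO$. If the equality is wanted on $\OO$, it should be read as saying that $\difbarM f$ extends continuously across $\OO\cap\rr$ with value $D_\B f$; this follows from the identity on the dense set $\OO\setminus\rr$. Apart from this point, the argument is a direct consequence of \eqref{casimir}, with alternativity supplying the left cancellation by $\x$.
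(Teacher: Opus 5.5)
Your proposal is correct and is essentially the paper's proof. From \eqref{casimir} and $\sC f=0$ you get $\DD_\B f=-\x^{-1}\Eu f$ on $\OO\setminus\rr$, and adding $\partial_{x_0}f$ gives $D_\B f=\difbarM f$. Your explicit check of the left cancellation $\x^{-1}(\x a)=a$ via alternativity is a detail the paper leaves implicit.
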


\begin{proof}
If $\sC f=0$, from \eqref{casimir} we obtain, for every $x\in\OO\setminus\rr$,
\[
-\x^{-1}\Eu f=\DD_\B f\ \Rightarrow\ \difbarM f=\partial_{x_0} f-\x^{-1}\Eu f=\partial_{x_0} f+\DD_\B f=D_\B f.
\]
\end{proof}

From Propositions \ref{pro:kernels} and \ref{pro:equality} we get the following result. 

\begin{corollary}\label{cor:sr_are Dunkl}
Let $k_i\in\rr$ be Dunkl multiplicities such that $\sum_{i=1}^nk_i=(1-n)/2$. Then every slice-regular function $f\in\sr(\OO)$ is Dunkl monogenic, i.e., $D_\B f=0$. \hfill\qed
\end{corollary}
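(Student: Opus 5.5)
The argument combines Proposition \ref{pro:kernels} and Proposition \ref{pro:equality}, and then extends the resulting identity from $\OO\setminus\rr$ to all of $\OO$.

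First, let $f\in\sr(\OO)$. By definition $f$ is a slice function induced by a holomorphic stem function, so $f\in\SL^1(\OO)$; indeed $f$ is even smooth, since holomorphic stem functions are real analytic. The standing assumption $\sum_{i=1}^nk_i=(1-n)/2$ is exactly the hypothesis of Proposition \ref{pro:kernels}, which therefore gives $\sC f=0$ on $\OO$. Note that $\OO$, being axially symmetric, is invariant under each reflection $r_i$: these maps preserve $x_0$ and $\|\x\|$, so they map every sphere $\alpha+\beta\s_M$ into itself. Hence $D_\B$ is well defined on $\OO$.

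Next, Proposition \ref{pro:equality} applies and yields $D_\B f=\difbarM f$ on $\OO\setminus\rr$, where $\x^{-1}$ makes sense. For slice functions we know that $\difbarM f=\dd f{x^c}$ on $\OO\setminus\rr$, and slice-regularity means precisely $\dd f{x^c}=0$. Therefore $D_\B f=0$ on $\OO\setminus\rr$.

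The only remaining issue, and the one point needing care, is the real points $\OO\cap\rr$. By Definition \ref{def:DiracDunkl} and the subsequent remark, $D_\B f$ is continuous on $\OO$ whenever $f\in C^1$; here one should use the integral form \eqref{eq:db_bis}, or \cite[Lemma 2.9]{Rosler}, which interprets the difference quotient $\frac{k_i}{x_i}(f-r_if)$ continuously across the hyperplanes $x_i=0$. The set $\OO\setminus\rr$ is dense in $\OO$, because $\rr\cap M$ is a line in a space of dimension at least $2$. Continuity then gives $D_\B f=0$ on all of $\OO$, so $f$ is Dunkl monogenic.
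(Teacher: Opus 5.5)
Your proposal is correct and follows the paper's route. The paper obtains the corollary by combining Proposition \ref{pro:kernels} (a $C^1$ slice function satisfies $\sC f=0$) with Proposition \ref{pro:equality} (then $D_\B f=\difbarM f$, which vanishes off the real axis for slice-regular $f$). Your extension of $D_\B f=0$ to $\OO\cap\rr$, using continuity of $D_\B f$ and density of $\OO\setminus\rr$, is a detail the paper leaves implicit, and you handle it correctly.
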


As a consequence of Corollary \ref{cor:sr_are Dunkl}, every slice-regular function is Dunkl harmonic, i.e., in the kernel of the Dunkl-Laplace operator $\Delta_{D,M}$ on $M$ associated with the reflection group $\zz_2^n$ (see e.g.\ \cite[\S7.5.1]{DunklXu}) with sum of the multiplicities equal to $(1-n)/2$:
\[
\Delta_{D,M}=\Delta_M+\sum_{i=1}^nk_i\left(\frac2{x_i}\partial_{x_i}-\frac{1-r_i}{x_i^2}\right).
\]
Therefore, if $f\in\sr(\OO)$ is slice-regular, it holds (see \eqref{eq:db} and \cite[Prop.\ 9]{CRoperators})
\begin{align*}
\dM f&=(1-n)\sd f=\sum_{i=1}^n \frac{k_iv_i}{x_i}(r_i f-f),\quad
\Delta_M f=\sum_{i=1}^nk_i\left(\frac{f(x)-r_if(x)}{x_i^2}-\frac2{x_i}\dd{f}{x_i}\right).
\end{align*}

The previous results allow us to extend to any hypercomplex subspace the characterization of slice-regularity for polynomials on Clifford algebras \cite[Proposition 3.3]{binosi2025dunklapproachsliceregular}. 

\begin{theorem}\label{teo:poly_slice_regularity}
Assume that the Dunkl multiplicities $k_i$ satisfy $\sum_{i=1}^nk_i=(1-n)/2$. Let $f\in \Aa[x_0,\ldots,x_n]$ be a 
polynomial function with coefficients in $\Aa$. The following conditions are equivalent:
\begin{itemize} 
  \item[(i)] $f$ is a slice-regular polynomial, namely, it has the form $f(x)=\sum_{j=0}^m x^ja_j$, with coefficients $a_j\in\Aa$;
  \item[(ii)] $f$ is Dunkl monogenic and belongs to the kernel of the Casimir operator $\sC$ (or of the spherical Dunkl-Dirac operator $\tGamma$).
\end{itemize}
\end{theorem}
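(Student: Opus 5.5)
The two implications are handled separately, and each reduces to results already established for polynomials.

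For (i)$\Rightarrow$(ii), let $f(x)=\sum_{j=0}^m x^ja_j$. It is a slice-regular function on all of $M$, which is axially symmetric, and it is of class $C^1$. Hence $f\in\SL^1(M)$, and Proposition \ref{pro:kernels} gives $\sC f=\tGamma f=0$. Corollary \ref{cor:sr_are Dunkl}, which rests on Proposition \ref{pro:equality}, then gives $D_\B f=\difbarM f=0$ on $M\setminus\rr$. By continuity $D_\B f=0$ on $M$. Here one should check that $D_\B f$ is continuous, which holds for $C^1$ functions by the remark after Definition \ref{def:DiracDunkl}.

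For (ii)$\Rightarrow$(i), assume $f$ is a polynomial with $\sC f=0$ (equivalently $\tGamma f=0$, since the two kernels coincide by \eqref{eq:r}) and $D_\B f=0$.

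\textbf{Step 1.} Theorem \ref{teo:poly_sliceness} shows that $f\in\SL(M)$ is a slice polynomial. Being a polynomial, its inducing stem function is polynomial, hence $C^1$, so $f\in\SL^1(M)$.

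\textbf{Step 2.} Proposition \ref{pro:equality} gives $\difbarM f=D_\B f=0$ on $M\setminus\rr$.

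\textbf{Step 3.} Since $f$ is slice, $\difbarM f=\dd f{x^c}$ on $M\setminus\rr$ by \cite[Theorem 2.2]{Gh_Pe_GlobDiff}. So $\dd f{x^c}=0$ there, and $f\in\sr(M)$ by the characterization recalled in the paper: a slice function is slice-regular if and only if $\difbarM f=0$ on $\OO\setminus\rr$.

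\textbf{Step 4.} Proposition \ref{pro:sliceregularpoly} yields $f(x)=\sum_j x^ja_j$.

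An alternative route avoids stem functions. Decompose $f$ into homogeneous components. Each operator involved ($\sC$, $D_\B$ and $\x\,\partial_{x_0}-\Eu$) acts compatibly with the grading: $\sC$ preserves degree, $D_\B$ lowers it by one, and multiplying $\difbarM f=0$ by $\x$ gives a degree-preserving identity. So each component satisfies the hypotheses separately, and the argument of Proposition \ref{pro:sliceregularpoly} applies componentwise.

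The main obstacle is the passage from $\difbarM f=0$ on $M\setminus\rr$ to genuine slice-regularity, that is, holomorphy of the stem function on all of $D$ including real points. For polynomials this is harmless: the stem function $F$ is polynomial in $(\alpha,\beta)$, $\partial F/\partial\bar z=0$ off the real axis, and continuity extends this to all of $D$. If needed, one can instead verify directly the identity $kf=x\,\partial_{x_0}f$ from the proof of Proposition \ref{pro:sliceregularpoly}, since that argument only uses $\difbar f=0$ off the real axis together with polynomiality.
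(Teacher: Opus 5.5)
Your proof is correct and follows the paper's argument. For (i)$\Rightarrow$(ii) you use Proposition \ref{pro:kernels} and Corollary \ref{cor:sr_are Dunkl}; for (ii)$\Rightarrow$(i) you use Theorem \ref{teo:poly_sliceness}, Proposition \ref{pro:equality}, the characterization of slice-regularity via $\difbarM$ from \cite{Gh_Pe_GlobDiff}, and Proposition \ref{pro:sliceregularpoly}. Your alternative componentwise route is also sound and even lets you skip the sliceness step for polynomials, because the proof of Proposition \ref{pro:sliceregularpoly} uses only $\difbar f=0$ off the real axis together with homogeneity.
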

\begin{proof}
The implication (i) $\Rightarrow$ (ii) follows from Corollary \ref{cor:sr_are Dunkl} and Proposition \ref{pro:kernels}. The converse implication comes from Theorem \ref{teo:poly_sliceness}, Proposition \ref{pro:equality}  and Proposition \ref{pro:sliceregularpoly}, using the fact that a slice polynomial $f$ is slice-regular if and only if $\difbarM f=0$ (see \cite[Theorem\ 2.4]{Gh_Pe_GlobDiff}). 
\end{proof}

Now we generalize the result to $C^1$ functions.

\begin{theorem}\label{teo:slice_regularity}
Assume that the Dunkl multiplicities $k_i$ are non-positive, with at most one zero multiplicity, and that $\sum_{i=1}^nk_i=(1-n)/2$. 
Let $\OO\subseteq M$ be open and axially symmetric and $f\in C^1(\OO,\Aa)$. The following conditions are equivalent:
\begin{itemize} 
  \item[(i)] $f$ is slice-regular on $\OO$;
  \item[(ii)] $f$ is Dunkl monogenic and belongs to the kernel of the operators $\sC$, $\sC'$ and $\sC''$. 
\end{itemize}
\end{theorem}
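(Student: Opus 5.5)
The proof will combine Theorem \ref{teo:C1_sliceness} with Proposition \ref{pro:equality}, following the same pattern as Theorem \ref{teo:poly_slice_regularity}.

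For (i) $\Rightarrow$ (ii), let $f\in\sr(\OO)$. Then $f$ is a $C^1$ slice function. Proposition \ref{pro:kernels} gives $\sC f=0$, since it only requires $\sum_i k_i=(1-n)/2$. Proposition \ref{pro:sliceoperators} gives $\sC' f=\sC''f=0$. Corollary \ref{cor:sr_are Dunkl} gives $D_\B f=0$. The sign assumptions on the $k_i$ are not used in this direction.

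For (ii) $\Rightarrow$ (i), assume $D_\B f=0$ and $\sC f=\sC'f=\sC''f=0$. The case $n=1$ needs a separate remark. Then every function on $\OO$ is slice, and $\kappa=k_1=0$, so $D_\B=\dM$ and $\sC=\x\,\DD_\B+\Eu=\x(\dM-\partial_{x_0})+\Eu$. Vanishing of $\sC f$ gives $D_\B f=\difbarM f$ on $\OO\setminus\rr$ directly by Proposition \ref{pro:equality}, so that case reduces to the last step below.

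For $n\ge2$, the hypotheses are exactly those of Theorem \ref{teo:C1_sliceness}: non-positive multiplicities with at most one vanishing, and $\sum_i k_i=(1-n)/2$. So $f$ is a slice function on $\OO$ of class $C^1$. Next, since $\sC f=0$, Proposition \ref{pro:equality} gives $\difbarM f=D_\B f=0$ on $\OO\setminus\rr$.

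It remains to conclude slice-regularity from $f\in\SL(\OO)\cap C^1$ and $\difbarM f=0$ on $\OO\setminus\rr$. For a slice function we have $\difbarM f=\dd f{x^c}$ on $\OO\setminus\rr$ (\cite[Theorem 2.2]{Gh_Pe_GlobDiff}). Hence the inducing stem function $F$ satisfies $\partial F/\partial\bar z=0$ on $D\setminus\rr$.

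The main obstacle is the regularity of $F$ across the real axis. The function $f$ is $C^1$, but we only know $F$ is continuous on $D$, with $F_\emptyset(\alpha)=f(\alpha)$ and $F_1(\alpha)=0$ (Proposition \ref{pro:sliceness_continuity_M}). I would handle this as follows. Since $F$ is continuous on $D$ and holomorphic on $D\setminus\rr$, each real component of $F$ is a continuous function that is holomorphic off a real line segment. By the Painlev\'e/Morera removability theorem for continuous functions holomorphic off a line, $F$ is holomorphic on all of $D$. Alternatively, one can invoke the statement that a $C^1$ slice function on $\OO$ belongs to $\SL^1(\OO)$ and is slice-regular as soon as $\difbarM f=0$ off the real axis, as recalled in \S2.4. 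In either case $f\in\sr(\OO)$.
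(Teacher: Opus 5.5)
Your proposal is correct and follows the paper's proof. The direction (i)$\Rightarrow$(ii) comes from Propositions \ref{pro:kernels}, \ref{pro:sliceoperators} and Corollary \ref{cor:sr_are Dunkl}, and (ii)$\Rightarrow$(i) from Theorem \ref{teo:C1_sliceness}, Proposition \ref{pro:equality} and \cite[Theorem 2.2]{Gh_Pe_GlobDiff}. Your separate treatment of $n=1$ is a welcome addition, since the paper only covers that case in the remark after the theorem, while Theorem \ref{teo:C1_sliceness} requires $n\ge2$.

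The ``obstacle'' at the real axis is milder than you suggest. For a fixed $J\in\s_M$ one has $F_\emptyset(z)=\tfrac12\big(f(\phi_J(z))+f(\phi_J(\bar z))\big)$ and $F_1(z)=-\tfrac12 J\big(f(\phi_J(z))-f(\phi_J(\bar z))\big)$. Hence $F$ is already $C^1$ on $D$, and $\partial F/\partial\bar z=0$ extends from $D\setminus\rr$ by continuity. Your Morera/Painlev\'e argument is also valid.
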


\begin{proof}
The implication (i) $\Rightarrow$ (ii) follows from Propositions \ref{pro:kernels} and \ref{pro:sliceoperators}. Conversely, assume that $D_\B f=\sC f=\sC' f=\sC''f=0$. From Theorem \ref{teo:C1_sliceness} and Proposition \ref{pro:equality} it follows that $f$ is a slice function on $\OO$, with $\difbarM f=D_\B f=0$. In view of \cite[Theorem\ 2.2]{Gh_Pe_GlobDiff}, $f$ is slice-regular on $\OO$.
\end{proof}

\begin{remark}
If $n=1$, the condition on the multiplicities implies $k_1=0$. Both the operators $D_\B$ and $\difbarM$ coincide with the Cauchy-Riemann operator of $M\simeq\cc$ and the Casimir operator $\sC$, as well as the operators $\tGamma$, $\sC'$ and $\sC''$,  is the zero operator. In this case slice-regularity and Dunkl-monogenicity both coincide with complex holomorphy. 
\end{remark}

\section{Dunkl-regular function spaces} 
\label{sec:dunkl_regular_function_spaces}

\subsection{Intermediate Dunkl-Dirac operators} 
\label{sub:Intermediate_Dunkl-Dirac_operators}

Let $\B=(1,v_1,\ldots,v_n)$ be a hypercomplex basis of the hypercomplex subspace $M$ of $\Aa$, with associated coordinates $x_0,x_1,\ldots,x_n$.  Following \cite{DeBieGenestVinet}, which developed the Clifford algebra case, we introduce the intermediate $\zz_2^n$ Dunkl-Dirac operators of $M$ w.r.t.\ $\B$. 

\begin{definition}\label{def:DiracDunklA}
Let $\OO\subseteq M$ be open. Let $A\subseteq[n]=\{1,\ldots,n\}$ and suppose that $\OO$ is \emph{$A$-symmetric}, namely it is invariant w.r.t.\ reflections $r_i$ for all $i\in A$. We define
$\DD_A:C^1(\OO,\Aa)\to C^0(\OO,\Aa)$ as 
\[
\DD_A=\sum_{i\in A}v_i T_{\B,i},
\]
where $T_{\B,i}$ are the Dunkl operators introduced in Definition \ref{def:DiracDunkl}. We also set 
\[
D_A=\partial_{x_0}+\sum_{j\in[n]\setminus A}v_j\partial_{x_j}+\DD_A.
\]
\end{definition}
Observe that the operator $D_A$ is the 
Dunkl-Cauchy-Riemann operator of $M$ with multiplicities $k_i=0$ for every $i\not\in A$.  In particular, $\DD_{[n]}$ is the operator $\DD_{\B}$, while $D_\emptyset=\dM$. 

We set $\x_A=\sum_{i\in A}v_i x_i$, $x_A=x_0+\x_A$. 
In particular, it holds $\x_{[n]}=\x$ and $x_\emptyset=0$.
Proposition \ref{pro:orsted} generalizes to the intermediate operators $\DD_A$.
The arguments of the proof of Proposition \ref{pro:orsted} can be repeated taking the subset $A\subseteq[n]$ in place of $[n]$. If $A\ne\emptyset$, it can be shown that the commutation relations can be completed to get that also the intermediate operators realize the Lie superalgebra $\mathfrak{osp}(1|2)$  (see \cite[Proposition 1]{DeBieGenestVinet} for the Clifford algebra case).

\begin{proposition}\label{pro:orstedA}
Let $\gamma_A:=|A|/2+\sum_{i\in A}k_i$, where $|A|$ is the cardinality of $A$ and let $\Delta_{A}=\sum_{i\in A}T^2_{\B,i}$ and $\Eu_A=\sum_{i\in A}x_i\partial_{x_i}$. It holds 
\begin{align*}
&\left\{\x_A,\x_A\right\}=-2\|\x_A\|^2;& &\left\{\DD_A,\DD_A\right\}=-2\Delta_{A}; && \left[\Eu_A,\x_A\right]=\x_A;\\
&\left\{\x_A,\DD_A\right\}=-2(\Eu_A+\gamma_A);& &\left[\DD_A,\Eu_A+\gamma_A\right]=\left[\DD_A,\Eu_A\right]=\DD_A; && \left[\Delta_{A},\x_A\right]=2\DD_A;\\
& \left[\DD_A,\|\x_A\|^2\right]=2\x_A,&&
\end{align*}
\end{proposition}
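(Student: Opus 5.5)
The plan is to repeat the computations in the proof of Proposition \ref{pro:orsted} with the index set $[n]$ replaced by $A$ everywhere. The identities that carry over unchanged are \eqref{eq:anticommute}, the relation $v_i(v_ia)=-a$, the commutation of $T_{\B,i}$ with multiplication by $x_j$ for $j\ne i$, and the mutual commutativity of the $T_{\B,i}$. The operators $\x_A$, $\DD_A$, $\Eu_A$, $\Delta_A$ involve only the indices in $A$, so each sum over $i,j\in[n]$ in that proof becomes a sum over $i,j\in A$. The only place where the range of summation shows up in the result is the constant obtained by counting indices, and this is why $n$ and $\kappa$ become $|A|$ and $\sum_{i\in A}k_i$, i.e.\ $\gamma$ becomes $\gamma_A$. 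Throughout, $\OO$ is assumed $A$-symmetric, so that all $T_{\B,i}$ with $i\in A$ are defined.

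I would take the relations in the following order.
\begin{enumerate}
\item The first relation comes from expanding $\x_A^2=\sum_{i,j\in A}x_ix_jv_i(v_j\,\cdot)$ with \eqref{eq:anticommute}. The cross terms cancel and the diagonal terms give $-\|\x_A\|^2$.
\item The second relation is the same computation applied to $\DD_A^2$, using that the $T_{\B,i}$ commute.
\item The relation $[\Eu_A,\x_A]=\x_A$ holds because $\x_A$ is linear and homogeneous in the variables $x_i$, $i\in A$.
\item For the anticommutator $\{\x_A,\DD_A\}$ I would split into $S_1$ (terms with $i=j$) and $S_2$ (terms with $i\ne j$). $S_2$ vanishes by \eqref{eq:anticommute} and $T_{\B,i}(x_jf)=x_jT_{\B,i}f$. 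In $S_1$ one uses $x_i\frac{k_i}{x_i}(f-r_if)+\frac{k_i}{x_i}(x_if+x_ir_if)=2k_if$, and summing over $i\in A$ gives $-2\Eu_Af-|A|f-2\sum_{i\in A}k_if$.
\item The bracket $[\DD_A,\Eu_A]$ follows from the one-variable identity $T_{\B,i}(x_i\partial_{x_i}f)-x_i\partial_{x_i}T_{\B,i}f=T_{\B,i}f$ together with $[T_{\B,i},x_j\partial_{x_j}]=0$ for $j\ne i$. Since $\gamma_A$ is a constant, it can be added to $\Eu_A$ without changing the bracket.
\item The last two relations, $[\Delta_A,\x_A]=2\DD_A$ and $[\DD_A,\|\x_A\|^2]=2\x_A$, are formal consequences of the earlier ones. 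I would copy the chains of equalities in the proof of Proposition \ref{pro:orsted} verbatim, using $\x_A^2=-\|\x_A\|^2$ and $[\Eu_A+\gamma_A,\x_A]=\x_A$.
\end{enumerate}

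The only genuine point of care is the nonassociative bookkeeping, namely that operators act as nested left multiplications $v_i(v_j(\cdot))$. This is handled exactly as before, since \eqref{eq:anticommute} holds for any pair of distinct indices in $[n]$, in particular for indices in $A$. I would also note that the coordinates $x_j$ with $j\notin A$ play no role in any of these operators and can be treated as parameters, so no further obstacle arises.
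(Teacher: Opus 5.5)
Your proposal is correct and follows the paper's own route. The paper proves this proposition only by saying that the computations of Proposition \ref{pro:orsted} can be repeated with $[n]$ replaced by $A$, so that $n$ and $\kappa$ become $|A|$ and $\sum_{i\in A}k_i$. Your relation-by-relation check carries out exactly that reduction, including the $S_1$/$S_2$ split and the one-variable identity $T_{\B,i}(x_i\partial_{x_i}f)-x_i\partial_{x_i}T_{\B,i}f=T_{\B,i}f$.
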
\hfill\qed

\begin{definition}
Let $A\subseteq[n]$. Let $\OO\subseteq M$ be open and $A$-symmetric. 
We define the operators 
$\SA,\GammaA:C^1(\OO,\Aa)\to C^0(\OO,\Aa)$ as 
\[
\SA=\tfrac12\left(\left[\x_A,\DD_A\right]-1\right),\quad \GammaA=\SA r_A,
\]
where $r_A$ is the composition of reflections $r_i$, $i\in A$. We also set
\[
\wSA:=\sum_{i\in A}x_i T_{\B,i}-\Eu_A,\quad \SA':=\tfrac12\wSA\left(1+r_A\right),
\quad \SA'':=\SA'\x_A.
\]
and $\mathscr S_A
:C^1(\OO,\Aa)\to (C^0(\OO,\Aa))^3$ the real-linear operator defined as follows
\[
\mathscr S_{A} f=(\SA f,\SA' f,\SA'' f).
\]
\end{definition}

\begin{remark}\label{rem:commutation}
As in Proposition \ref{pro:operators}, it can be proved that it holds $\{\SA,\DD_A\}=\{\SA,\x_A\}=0$ and  $\{\DD_A,r_A\}=\{\SA,r_A\}=\{\x_A,r_A\}=0$. 
\end{remark}

Observe that for $A=\emptyset$, it holds 
\[
\underline S_\emptyset=\widetilde\Gamma_\emptyset=-1/2,\quad \underline{\widetilde S}_\emptyset=\underline{S}_\emptyset'=\underline{S}''_\emptyset=0,
\]
while for $A=\{i\}$, a direct computation gives
\[
\underline S_{\{i\}}=k_ir_i,\quad \widetilde\Gamma_{\{i\}}=k_i,\quad \underline{\widetilde S}_{\{i\}}=k_i(1-r_i),\quad \underline{S}_{\{i\}}'=\underline S''_{\{i\}}=0.
\]

In the following, for any $A\subseteq[n]$, we will always assume that the Dunkl multiplicities $k_i$ in $\DD_A$ and $\SA$ are non-positive, with $k_i=0$ for at most one index $i\in A$ and $\sum_{i\in A}k_i=(1-|A|)/2$, that is equivalent to $\gamma_A=1/2$. 
Under these conditions the intermediate Casimir operator $\SA$ can be written, as in Proposition \ref{pro:operators}, in the form
\begin{equation}\label{eq:SA}
  \SA=\x_A\DD_A+\Eu_A,
\end{equation}
and it holds $\mathscr S_{\{i\}}=0$ for every $i=1,\ldots,n$.  

\begin{definition}
Let $\emptyset\not=A=\{i_1,\ldots,i_\ell\}\subseteq[n]$ and let $\OO\subseteq M$ be $A$-symmetric. 
Given a function $f:\OO\to\Aa$, we define the \emph{$A$-spherical value} of $f$ as the function $f^\circ_{s,A}:\OO\to\Aa$ defined by
\[
f^\circ_{s,A}(x)=\tfrac12(f(x)+f(\bar x^A)).
\]
Here $\bar x^A=r_A(x)=(r_{i_1}\circ\cdots\circ r_{i_\ell})(x)\in\OO$ is the reflected point w.r.t.\ $A$.
\end{definition}

\begin{remark}\label{rem:sliceness_continuity_A}
By definitions, the equality $f=f^\circ_{s,A}+\x_A^{-1}{(\x_A f)}^\circ_{s,A}$ holds on $\OO\setminus\{\x_A=0\}$.
As in Proposition \ref{pro:sliceness_continuity_M}, if $f\in C(\OO,\Aa)$, the function $g:=\|\x_A\|^{-1}(\x_Af)^\circ_{s,A}$ extends continuously to $\OO$ with value zero on $\OO\cap\{\x_A=0\}$: if  $y\in\OO$ with $\underline y_A=0$, then 
\[
\|g(x)\|\le \omega\|\x_A^{-1}{(\x_A f)}^\circ_{s,A}(x)\|=\omega\|f(x)-f^\circ_{s,A}(x)\|\le\omega(\|f(x)-f(y)\|+\|f(y)-f^\circ_{s,A}(x)\|)
\to0
\]
as $x$ tends to $y$. 
\end{remark}

\begin{remark}\label{rem:fsa}
If $f\in\SL(\OO)$ is a slice function, then its $A$-spherical value is related to $\vs f$ and $f'_s$ by the formula
\begin{equation}\label{eq:equality_spherical_slice}
\textstyle
  f^\circ_{s,A}(x)=\vs f(x)+\tfrac12\IM(x+\bar x^A) f'_s(x)=\vs f(x)+(x-x_A)f'_s(x),
\end{equation}
valid for every $x\in\OO\setminus\rr$. This is a direct consequence of formula \eqref{eq:spherical}.
A similar computation yields 
\begin{equation}\label{eq:equality_spherical_derivative_slice}
\textstyle
  (\x_A f)^\circ_{s,A}(x)=\x_A^2 f'_s(x)=-\|\x_A\|^2 f'_s(x).
\end{equation}
In particular, if $f$ is a slice function, $f^\circ_{s,A}$ and $(\x_A f)^\circ_{s,A}$ are $r_i$-invariant for every $i\in A$.
\end{remark}

Let $A=\{i_1,\ldots,i_\ell\}\subseteq[n]$. Let $M_A$ be the hypercomplex subspace $M_A:=\langle 1,v_{i_1},\ldots,v_{i_\ell}\rangle\subseteq M$ and $\s_{M_A}=\s_\Aa\cap M_A$ the $(\ell-1)$-dimensional unit sphere in $M_A\cap\ker(t)$.

\begin{definition}
Let $\emptyset\ne A\subseteq[n]$ and $A^c=[n]\setminus A=\{j_1,\ldots,j_{n-\ell}\}$ with $j_1<\cdots <j_{n-\ell}$. A subset $\OO$ of $M$ is called \emph{$A$-circular} if for any $x=x_0+\x_A+\x_{A^c}\in\OO$, with $0\ne\x_A=\beta I$ and $I\in \s_{M_A}$, $\beta\in\rr$,  the element $x_J:=x_0+\beta J+\x_{A^c}$ of $M$ belongs to $\OO$ for every $J\in \s_{M_A}$. 
Equivalently, $\OO$ is $A$-circular if there exists $E=E'\times E''\subseteq\cc\times\rr^{n-\ell}$ such that $\OO=\OO_E$, where 
\[\textstyle
\OO_E:=\left\{x_0+\beta J+\sum_{i=1}^{n-\ell}v_{j_i}y_i\;|\; (x_0+i\beta,y)\in E, J\in \s_{M_A}\right\}.
\]
\end{definition}

Every $A$-circular set is $A$-symmetric (see Definition \ref{def:DiracDunklA}): if $x=x_0+\beta I+\x_{A^c}\in\OO$, with $I\in \s_{M_A}$, and $i\in A$, then $r_i(I)\in \s_{M_A}$ and  $r_i(x)=x_0+\beta r_i(I)+\x_{A^c}\in\OO$. 
Observe that a set $\OO$ is axially symmetric if and only if it is $[n]$-circular. Therefore $\OO$ is $A$-circular in $M$ if and only if $\OO\cap M_A$ is axially symmetric in $M_A$. 
Moreover, if $A=\{i\}$, then $\s_{M_A}=\{\pm v_i\}$ and $\OO$ is $A$-circular if and only if it is $A$-symmetric. 

\begin{definition}\label{def:A-Dunkl-regular}
Let $\OO\subseteq M$ be open and $\zz_2^n$-invariant and let $A$ 
be a subset of $[n]$.
We call \emph{$A$-Dunkl-regular 
functions} on $\OO$ the elements of the function space 
\[
\F_{A}(\OO):=\{f\in C^1(\OO,\Aa)\;|\; f\in\ker D_A\cap\ker \mathscr S_A\}.
\]
It is easy to verify that $\F_A(\OO)$ is a real vector space (a right $\Aa$-module if $\Aa$ is associative). 
\end{definition}

\begin{remark}\label{rem:FA}
(a)\quad Since $\F_{A}(\OO)\subseteq\ker D_A$, every $A$-Dunkl-regular function is, in particular, Dunkl monogenic on $\OO$ and then Dunkl harmonic. 

(b)\quad Since $\ker\mathscr S_\emptyset=\{0\}$, it holds $\F_\emptyset(\OO)=\{0\}$ for every $A$. 

(c)\quad If $A=\{i\}$, it holds 
$\mathscr S_{\{i\}}=0$ and $k_i=0$. Then $\F_{\{i\}}(\OO)=\ker D_{\{i\}}=\ker \dM$ is the space $\M(\OO)$ of monogenic functions on $\OO$ for every $i=1,\ldots,n$. 
\end{remark}

\begin{remark}\label{rem:FAsubsets}
Let $A\not=\emptyset,[n]$ and let $A^c=[n]\setminus A$. Since on $M_A$ it holds $D_A=\partial_{x_0}+\DD_A$ and on $M_{A^c}$ it holds $D_A=\dibar_{M_{A^c}}$ and $\mathscr S_A=0$, the inclusions of the hypercomplex subspaces $M_A\hookrightarrow M$ and $M_{A^c}\hookrightarrow M$ induce space inclusions of $\sr(\OO\cap M_A)$ and of $\M(\OO\cap M_{A^c})$ in the space $\F_A(\OO)$. 
However, observe that if $f\in\F_A(\OO)$, then it can happen that $f_{|\OO\cap M_A}\not\in\sr(\OO\cap M_A)$ and $f_{|\OO\cap M_{A^c}}\not\in\M(\OO\cap M_{A^c})$. For such a function, one can take $f=f_1+f_2\in\F_{\{2,3\}}(\hh)$, with $f_1,f_2$ as in the forthcoming Example \ref{ex:linear}. 
\end{remark}

More in general, one can consider two different kernels obtained from two subsets $A,B\subseteq[n]$.


\begin{definition}\label{def:FAB}
Let $\OO\subseteq M$ be open and $\zz_2^n$-invariant and let $A,B$ 
subsets of $[n]$.
We call \emph{$(A,B)$-Dunkl-regular 
functions} on $\OO$ the elements of the function space 
\[
\F_{A,B}(\OO):=\{f\in C^1(\OO,\Aa)\;|\; f\in\ker D_A\cap\ker \mathscr S_B\}.
\] 
\end{definition}

\begin{remark}\label{rem:FAB}
(a)\quad Since $\F_{A,B}(\OO)\subseteq\ker D_A$, every $(A,B)$-Dunkl-regular function is, in particular, Dunkl monogenic on $\OO$ and then Dunkl harmonic. 

(b)\quad Since $\ker\mathcal S_\emptyset=\{0\}$, it holds $\F_{A,\emptyset}(\OO)=\{0\}$ for every $A$. Moreover, $\F_{\emptyset,B}(\OO)=\ker\dM\cap\ker\mathscr S_B=\M(\OO)\cap\ker\mathscr S_B$ for every $B$. This means that every $(\emptyset,B)$-Dunkl-regular function is monogenic on $\OO$. 
\end{remark}

Using the same arguments of the proofs of Lemmas \ref{PerronFrobenius} and \ref{lemma:KerGammaM}, we obtain  intermediate versions of those results. 

\begin{lemma}\label{PerronFrobeniusA}
Let $A\subseteq[n]$. 
Let $\OO\subseteq M$ be 
$r_i$-invariant for every $i\in A$ and let $g:\OO\to\Aa$ be a function invariant w.r.t.\ conjugation $x\mapsto \bar x^A$. 
If $\wSA g=0$ on $\OO$, then $g=r_ig$ on $\OO$ for every $i\in A$.\qed
\end{lemma}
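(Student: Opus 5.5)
The plan is to reproduce the proof of Lemma \ref{PerronFrobenius}, with $[n]$ replaced by $A$ and the conjugation $x\mapsto x^c$ replaced by $r_A$. Throughout we keep the standing assumptions on the multiplicities in $A$: $k_i\le0$ for $i\in A$, at most one of them zero, and $\sum_{i\in A}k_i=(1-|A|)/2$. If $|A|\le1$ there is nothing to prove. For $A=\{i\}$, invariance under $\bar x^A=\bar x^i$ is already the thesis. So assume $\ell=|A|\ge2$; then $\sum_{i\in A}k_i<0$.

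First, I would derive the analogue of \eqref{eq:g}. By \eqref{eq:tildeS} restricted to $A$, $\wSA g=\sum_{i\in A}k_i(g-r_ig)$. Hence $\wSA g=0$ gives
\[
g(x)=\sum_{i\in A}\alpha_i\,g(\bar x^i),\qquad \alpha_i=k_i\Big/\textstyle\sum_{j\in A}k_j .
\]
Here $\alpha_i\ge0$, at most one $\alpha_i$ vanishes, and $\sum_{i\in A}\alpha_i=1$.

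For $\ell=2$, with $A=\{a,b\}$, evaluate this identity at $\bar x^a$ and at $\bar x^b$, using $g(\bar x^{\{a,b\}})=g(x)$ exactly as in the $n=2$ case. This gives $g(\bar x^a)=g(\bar x^b)=g(x)$.

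For $\ell>2$, the steps are as follows.
\begin{itemize}
\item Fix $x$ with $x_i\ne0$ for all $i\in A$. Choose $i_0\in A$ with $\alpha_i>0$ for every $i\in A\setminus\{i_0\}$.
\item Consider the $2^{\ell-1}$ points $\mathcal V=\{\bar x^H : H\subseteq A\setminus\{i_0\}\}$. For each $H\subseteq A$, exactly one of $\bar x^H$ and $\bar x^{A\Delta H}=r_A(\bar x^H)$ lies in $\mathcal V$, and these two points carry the same value of $g$ by the invariance hypothesis.
\item Writing the identity at each point of $\mathcal V$ gives $(I-A')u=0$. Here $u$ is the $\Aa$-valued vector of values of $g$ on $\mathcal V$, and $A'$ is a real symmetric doubly stochastic matrix; symmetry comes from $(H\Delta\{i\})\Delta\{i\}=H$.
\item The graph of $A'$ is strongly connected, because the reflections $i\ne i_0$ carry positive weight and generate all $H\subseteq A\setminus\{i_0\}$.
\item By Perron--Frobenius, the eigenvalue $1$ of the irreducible matrix $A'$ is simple with eigenvector $(1,\dots,1)$. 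Applying this componentwise to the real components of $u$ shows that $g$ is constant on $\mathcal V$. Thus $g(\bar x^i)=g(x)$ for $i\in A\setminus\{i_0\}$, and $g(\bar x^{i_0})=g(\bar x^{A\setminus\{i_0\}})=g(x)$.
\end{itemize}

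The main obstacle is the final density step. In Lemma \ref{PerronFrobenius} continuity of $g$ is used implicitly. The set $\{x\in\OO : x_i\ne0\ \forall i\in A\}$ is dense in $\OO$, so the identities extend to all of $\OO$ when $g$ is continuous, which is the case in every application ($g=f^\circ_{s,A}$ or $g=(\x_Af)^\circ_{s,A}$ with $f\in C^1$).

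Without continuity, I would handle points with some $x_i=0$, $i\in A$, directly. Let $Z=\{i\in A : x_i=0\}$, so $\bar x^i=x$ for $i\in Z$ and the identity reduces to
\[
\Big(\textstyle\sum_{i\in A\setminus Z}\alpha_i\Big)g(x)=\sum_{i\in A\setminus Z}\alpha_i\,g(\bar x^i).
\]
This is the same type of equation on the smaller set $A\setminus Z$, with renormalised weights, and the same Perron--Frobenius argument applies. The only degenerate case is when $A\setminus Z$ is reduced to a single index or to $\{i_0\}$ with $\alpha_{i_0}=0$. Then $\bar x^{i_0}=\bar x^A$ is a point where invariance gives the claim directly.
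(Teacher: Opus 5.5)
Your proposal is correct and follows the paper's route. The paper proves this lemma by repeating the Perron--Frobenius argument of Lemma~\ref{PerronFrobenius} with $[n]$ replaced by $A$ and the conjugation $x\mapsto x^c$ replaced by $r_A$, which is exactly what you do. Your separate treatment of points where some $x_i$ with $i\in A$ vanishes, reducing to the same weighted equation on $A\setminus Z$, is a small improvement: it removes the tacit continuity of $g$ that the paper's density step relies on.
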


\begin{lemma}\label{lemma:KerGammaMA}
Let $A\subseteq[n]$.  Let $\OO\subseteq M$ be open and $r_i$-invariant for every $i\in A$.
Let $g:\OO\to\Aa$ be a $C^1$ function such that $g=r_ig$ for every $i\in A$. 
For every $x\in \OO\setminus\rr$, consider the set 
\[
\s_{x,A}:=\{x'=x'_A+(x-x_A)\in M\;|\; x'_A\in\s_x\cap M_A\}. 
\]
If $\Gamma_{M_A} g=0$ on $\OO$, then for every $x\in\OO\setminus\rr$, $g$ is locally constant on the set $\s_{x,A}$. \qed
\end{lemma}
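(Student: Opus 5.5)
The plan is to reduce Lemma \ref{lemma:KerGammaMA} to Lemma \ref{lemma:KerGammaM}, treating the coordinates outside $A$ as frozen parameters. Set $\ell=|A|$. The case $\ell\le1$ is trivial, since then $\s_{x,A}$ reduces to at most the two points $x_A$, $x_A^c$ shifted by $x-x_A$. If $A=\{i\}$, the equality $g=r_ig$ already gives constancy on $\s_{x,A}$. We may therefore assume $\ell\ge2$.

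Fix $x\in\OO\setminus\rr$ and write $c:=x-x_A+x_0=\x_{A^c}$, so that points of $\s_{x,A}$ have the form $x_0+\beta J+\x_{A^c}$ with $J\in\s_{M_A}$. Define the slice $U:=\{y\in M_A : y+\x_{A^c}\in\OO\}$, which is open in $M_A$, and the function $h(y):=g(y+\x_{A^c})$ on $U$. Then $h$ is $C^1$. It is invariant under $r_i$ for $i\in A$, because the reflections $r_i$, $i\in A$, do not change $\x_{A^c}$. The set $U$ is $\zz_2^\ell$-invariant as a subset of the hypercomplex subspace $M_A$, whose hypercomplex basis is $(1,v_{i_1},\ldots,v_{i_\ell})$.

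The key observation is that $\Gamma_{M_A}$, written in the basis $(1,v_{i_1},\ldots,v_{i_\ell})$, involves only $x_i$ and $\partial_{x_i}$ for $i\in A$, and these commute with freezing the variables $x_j$, $j\notin A$. Hence $\Gamma_{M_A}h(y)=(\Gamma_{M_A}g)(y+\x_{A^c})=0$ on $U$. Lemma \ref{lemma:KerGammaM}, applied in $M_A$ (with $n$ replaced by $\ell\ge2$), then says that $h$ is locally constant on $\s_{y}\cap U$ for every $y\in U\setminus\rr$. Translating back through $y\mapsto y+\x_{A^c}$, this means $g$ is locally constant on $\s_{x,A}$ (intersected with $\OO$), with $y=x_A$.

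The main point needing care is the degenerate case $x_A\in\rr$, that is $\x_A=0$ while $x\notin\rr$. In this case $\s_{x,A}=\{x\}$ and there is nothing to prove. A second point is that the proof of Lemma \ref{lemma:KerGammaM} changes the hypercomplex basis of $M_A$ and uses the basis independence of $\Gamma_{M_A}$ (Definition \ref{def:GammaM}). This is legitimate because we apply it to $h$, which lives entirely on $M_A$. The only requirement is that the rotated basis of $M_A\cap\ker(t)$ still yields a $\zz_2^\ell$-invariant $h$, and this holds because, as in that proof, only invariance with respect to the original reflections is used, through the formula for $\partial_{x_i}g(\bar x^j)$ in the original coordinates. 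I expect checking this compatibility, namely that we freeze the coordinates first and only afterwards invoke basis independence inside $M_A$, to be the only delicate step.
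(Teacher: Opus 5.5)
Your main line is correct and is essentially the paper's route. The paper proves this lemma only by remarking that the argument of Lemma \ref{lemma:KerGammaM} carries over. Freezing $\x_{A^c}$ and applying Lemma \ref{lemma:KerGammaM} to $h$ on $U\subseteq M_A$, with the basis $(1,v_{i_1},\ldots,v_{i_\ell})$, makes that precise, because $h$ satisfies the hypotheses of Lemma \ref{lemma:KerGammaM} exactly as stated. Used as a black box, that lemma needs no further compatibility check.

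Your last paragraph, however, is wrong and should be dropped. Evenness in the original coordinates does not give evenness in a rotated basis: $x_1^2$ is even in $x_1$ and $x_2$, but not in $(x_1+x_2)/\sqrt2$. The proof of Lemma \ref{lemma:KerGammaM} does use the rotated version, since \eqref{eq:sums} and the parity formula for $\partial_{x_i}g(\bar x^j)$ are written in the coordinates of the new basis. Knowing that $h$ is even in those coordinates amounts to knowing the conclusion, so your justification is circular. The point you flagged is a real weak spot of that proof as written, and hence of the paper's ``same arguments'' proof here.

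A basis-free argument closes it and proves the lemma directly. For $i<j$ in $A$ one has $r_kL_{ij}=-L_{ij}r_k$ if $k\in\{i,j\}$ and $r_kL_{ij}=L_{ij}r_k$ otherwise. Hence $r_H(L_{ij}g)=(-1)^{|H\cap\{i,j\}|}L_{ij}g$ for every $H\subseteq A$, where $r_H$ is the composition of the $r_k$, $k\in H$; it is defined on $\OO$ because $\OO$ is $r_k$-invariant. The $r_H$ commute with left multiplication by constants of $\Aa$. Apply $2^{-|A|}\sum_{H\subseteq A}(-1)^{|H\cap\{i,j\}|}r_H$ to
\[
0=\Gamma_{M_A}g=-\sum_{p<q,\ p,q\in A}v_p(v_qL_{pq}g).
\]
This kills every term with $\{p,q\}\ne\{i,j\}$ and leaves $v_i(v_jL_{ij}g)=0$, hence $L_{ij}g=0$ on $\OO$ for all $i,j\in A$, since left multiplication by $v_i$ is invertible. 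These fields do not move $x_0$ or $\x_{A^c}$. They span the tangent spaces of the spheres $\s_{x,A}$ at points where $\x_A\neq0$, so $g$ is locally constant on $\s_{x,A}\cap\OO$. No change of basis is needed, and the case $A=[n]$ also gives Lemma \ref{lemma:KerGammaM} without its rotation step.
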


We recall that 
$\Gamma_{M_A}=-\sum_{i,j\in A, i<j} v_i(v_jL_{ij})$, with $L_{ij}$ tangential differential operators for the spheres $\s_x\cap M_A$ and then also for the translated spheres $\s_{x,A}$. 

The next proposition characterizes functions in the kernel of the differential-difference operator $\mathscr S_A$.

\begin{proposition}\label{pro:SA}
Let $\emptyset\ne A\subseteq[n]$ and $A^c=[n]\setminus A=\{j_1,\ldots,j_{n-\ell}\}$ with $j_1<\cdots <j_{n-\ell}$. Let $\OO=\OO_E$ be an $A$-circular open subset of $M$, with $E=E'\times E''\subseteq\cc\times\rr^{n-\ell}$.
If $f\in C^1(\OO,\Aa)$ and $\mathscr S_A f=0$ on $\OO$, then there exist continuous functions 
$F_{\emptyset}^A,F_1^A:E\to\Aa$ such that for every $x\in\OO$ it holds
\[
f(x)=F_{\emptyset}^A(z,y)+JF_1^A(z,y).
\]
Here $z=x_0+i\beta\in E'$, $y\in E''$, $x=x_0+\beta J+\sum_{i=1}^{n-\ell}v_{j_i}y_i\in\OO$, where $J=\text{sign}(\beta)\tfrac{\x_A}{\|\x_A\|}=\beta^{-1}\x_A$ if $|\beta|=\|\x_A\|\ne0$, and $J$ is any element of $\s_{M_A}$ if $\beta=0=\x_A$. The functions $F_{\emptyset}^A,F_1^A$ are, respectively, even and odd w.r.t.\ $\beta$.
Conversely, if $f(x)=F_{\emptyset}(z,y)+JF_1(z,y)$ with $F_{\emptyset},F_1$ an even/odd pair w.r.t.\ $\beta$, then 
$\mathscr S_A f=0$ on $\OO$.
\end{proposition}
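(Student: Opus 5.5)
The plan is to mimic the proof of Theorem \ref{teo:C1_sliceness} with $A$ in place of $[n]$, working on the translated spheres $\s_{x,A}$ instead of $\s_x\cap M$. Assume $\mathscr S_A f=0$, and let $g$ be either $f^\circ_{s,A}$ or $(\x_Af)^\circ_{s,A}$.

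\textbf{Step 1 (reflection invariance).} By construction $g$ is invariant under $x\mapsto\bar x^A$. Since $\SA' f=\wSA(f^\circ_{s,A})$ and $\SA''f=\wSA((\x_Af)^\circ_{s,A})$, the hypotheses $\SA'f=\SA''f=0$ together with Lemma \ref{PerronFrobeniusA} give $g=r_ig$ for every $i\in A$. This uses the standing assumptions on the multiplicities in $A$ and the fact that an $A$-circular set is $A$-symmetric.

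\textbf{Step 2 (kernel of $\Gamma_{M_A}$).} Because $g$ is $r_i$-invariant for $i\in A$, we have $T_{\B,i}g=\partial_{x_i}g$ for $i\in A$. Then \eqref{eq:SA} and \eqref{eq:anticommute} give $\SA g=-\Gamma_{M_A}g$, by the same computation as in the proof of Theorem \ref{teo:C1_sliceness}. By Remark \ref{rem:commutation}, $\SA$ anticommutes with $r_A$ and with $\x_A$. Hence
\[
2\SA(f^\circ_{s,A})=\SA f-r_A\SA f=0 .
\]
Similarly, $2\SA((\x_Af)^\circ_{s,A})=\SA(\x_Af)+\SA(r_A(\x_Af))=-\x_A\SA f+r_A\x_A\SA f=0$. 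Lemma \ref{lemma:KerGammaMA} then shows that both functions are locally constant on each $\s_{x,A}$.

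\textbf{Step 3 (global constancy).} Local constancy must be upgraded to constancy on $\s_{x,A}$. This is automatic when $|A|\ge3$, since these spheres are connected and lie in $\OO$ by $A$-circularity. When $|A|=2$ the spheres are circles, still connected. When $|A|=1$ the set $\s_{x,A}$ consists of $x$ and $\bar x^A$, and invariance under the reflection is exactly what is needed. This gives functions $G_0,G_1$ of $(z,y)$ with $f^\circ_{s,A}=G_0$ and $(\x_Af)^\circ_{s,A}=G_1$, even in $\beta$.

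\textbf{Step 4 (the representation).} Set $F_\emptyset^A:=G_0$ and $F_1^A:=-\beta^{-1}G_1$ for $\beta\ne0$; then $F_1^A$ is odd in $\beta$. Using $\x_A=\beta J$ and $\x_A^{-1}=-\beta^{-1}J$, the identity of Remark \ref{rem:sliceness_continuity_A} gives
\[
f=f^\circ_{s,A}+\x_A^{-1}(\x_Af)^\circ_{s,A}=F_\emptyset^A+JF_1^A .
\]
For continuity at $\beta=0$, extend $F_1^A$ by $0$ and $F_\emptyset^A$ by $f$; Remark \ref{rem:sliceness_continuity_A} shows $\|F_1^A\|=\|\x_A\|^{-1}\|(\x_Af)^\circ_{s,A}\|\to0$, exactly as in Proposition \ref{pro:sliceness_continuity_M}.

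\textbf{Converse.} Suppose $f=F_\emptyset+JF_1$. Then $f^\circ_{s,A}=F_\emptyset$ and $(\x_Af)^\circ_{s,A}=-\beta F_1$, both depending only on $(z,y)$, hence invariant under each $r_i$, $i\in A$. By \eqref{eq:tildeS} restricted to $A$, this gives $\SA'f=\SA''f=0$. For $\SA f=0$, repeat Proposition \ref{pro:kernels} on $M_A$. Write $f=f^\circ_{s,A}+\x_A h$ with $h=\beta^{-1}JF_1\cdot$ (more precisely $h=-\|\x_A\|^{-2}(\x_Af)^\circ_{s,A}=\beta^{-1}F_1$), a function of $(z,y)$ only. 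Then $\SA f=\SA(f^\circ_{s,A})-\x_A\SA h$. Each term equals $-\Gamma_{M_A}$ of a function constant on the spheres $\s_{x,A}$, which vanishes because the $L_{ij}$, $i,j\in A$, are tangential to those spheres.

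\textbf{Main obstacle.} I expect the main difficulty to be justifying Lemmas \ref{PerronFrobeniusA} and \ref{lemma:KerGammaMA}, which the paper states only by analogy. A further concern is regularity at $\beta=0$: $h=\beta^{-1}F_1$ must be $C^1$ there for the converse computation. I would handle this by working on $\{\x_A\ne0\}$ and then arguing by continuity.
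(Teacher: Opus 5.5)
Your proposal is correct and follows the paper's proof essentially step by step: Lemma~\ref{PerronFrobeniusA} gives $r_i$-invariance of $f^\circ_{s,A}$ and $(\x_Af)^\circ_{s,A}$, the identity $\SA g=-\Gamma_{M_A}g$ and the anticommutation of $\SA$ with $r_A$ and $\x_A$ put both functions in $\ker\Gamma_{M_A}$, Lemma~\ref{lemma:KerGammaMA} gives constancy on $\s_{x,A}$, and $F^A_\emptyset$, $F^A_1$ are then defined with continuity at $\x_A=0$ from Remark~\ref{rem:sliceness_continuity_A}; the converse uses $\{\SA,\x_A\}=0$ and the vanishing of $\wSA$ on $r_i$-invariant functions. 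Your additions only make explicit what the paper leaves implicit: the connectedness discussion for $|A|\le2$, writing the odd part as $\x_A h$ with $h=\beta^{-1}F_1$ (well defined on $\OO$, unlike $F_1$ alone), and the density argument on $\{\x_A\ne0\}$.
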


\begin{proof}
Assume that  $\SA f=\SA' f=\SA'' f=0$ on $\OO$. Let $g=f^\circ_{s,A}$ or $g=(\x_A f)^\circ_{s,A}$. Then $g$ satisfies the assumptions of Lemma \ref{PerronFrobeniusA}. Therefore $g=r_ig$,  $T_{\B,i}g=\partial_{x_i}g$ for every $i\in A$ and $\DD_A g=\sum_{i\in A}v_i\partial_{x_i}g$. Using \eqref{eq:SA}, it follows that 
\begin{align}\label{eq:sagamma}
\SA g&=\x_A\,\DD_A g+\Eu_A g =\textstyle\sum_{i\in A}x_iv_i\left(\sum_{j\in A}v_j \partial_{x_i} g \right)+\sum_{i\in A}x_i\partial_{x_i} g \\\notag
&\textstyle=\sum_{i,j\in A,i<j}(x_i v_i(v_j \partial_{x_j} g )+x_jv_j(v_i\partial_{x_i} g ))\\\notag
&\textstyle=\sum_{i,j\in A,i<j}v_i(v_j(x_i \partial_{x_j}-x_j\partial_{x_i})g)=-\Gamma_{M_A} g.
\end{align}
If $g=f^\circ_{s,A}$, then 
\[
2\Gamma_{M_A}(f^\circ_{s,A})=-2\SA(f^\circ_{s,A})=-\SA\left(f+r_A f\right)=-\SA f+ r_A\SA f=0.
\]
Using Remark \ref{rem:commutation}, we obtain the same equality for $g=(\x_A f)^\circ_{s,A}$. Therefore $f^\circ_{s,A}$ and $(\x_A f)^\circ_{s,A}$ belongs to $\ker \Gamma_{M_A}$. 
From Lemma \ref{lemma:KerGammaMA} we get that, for every $x\in \OO\setminus\rr$, the functions $f^\circ_{s,A}$ and $(\x_A f)^\circ_{s,A}$ are constant on the connected sets 
\[
\s_{x,A}=\{x'=x'_A+(x-x_A)\in M\;|\; x'_A\in\s_x\cap M_A\}. 
\]
This allows to define
\[
F_{\emptyset}^A(z,y):=f^\circ_{s,A}(x),\quad F_1^A(z,y):=-\beta^{-1}(\x_Af)^\circ_{s,A}(x), 
\]
when $\beta\ne0$ and $F_1^A(z,y):=0$ when $\beta=0$, i.e., $\x_A=0$. 
In view of Remark \ref{rem:sliceness_continuity_A}, $F_{\emptyset}^A$ and $F_1^A$ extend continuously to $\OO$ and since $\x_A^{-1}=-\beta^{-1}J$ when $\beta\ne0$, it holds $f(x)=F_{\emptyset}^A(z,y)+JF_1^A(z,y)$. If $x\in\OO$ and $\beta=0$, then $x\in M_{A^c}$ and $F_{\emptyset}^A(z,y)=f(x)$, $F_1^A(z,y)=0$. 

Now we prove the converse statement. 
Assume that $f(x)=F_{\emptyset}(z,y)+JF_1(z,y)$, with $x=x_0+\beta J+\sum_{i=1}^{n-\ell}v_{j_i}y_i\in\OO$, $J\in\s_{M_A}$, $z=x_0+i\beta\in E'$ and $y\in E''$. If $g=F_{\emptyset}(z,y)$ or $g=F_1(z,y)$, then 
$g$ is invariant w.r.t.\ $J\in \s_{M_A}$ and therefore, as in \eqref{eq:sagamma},
\[
\SA g =\x_A\,\DD_A g +\Eu_A g 
\textstyle=\sum_{i,j\in A,i<j}(x_i v_i(v_j \partial_{x_j} g )+x_jv_j(v_i\partial_{x_i} g ))=-\Gamma_{M_A} g=0.
\]
Since $f(x)=F_{\emptyset}(z,y)+JF_1(z,y)=F_{\emptyset}(z,y)+\beta^{-1}\x_A F_1(z,y)$, using $\{\SA,\x_A\}=0$, we get $\SA f=\SA (F_{\emptyset})-\x_A\SA(\beta^{-1}F_1)=0$ on $\OO$.

Since $\x_A f=\x_A F_\emptyset+\beta^{-1}\x_A^2 F_1=\x_A F_\emptyset-\beta F_1$ and $F_{\emptyset}$ and $F_1$ are $r_i$-invariant for all $i\in A$, it also holds $\SA' f=\wSA (f^\circ_{s,A})=\wSA F_\emptyset=0$ and $\SA'' f=\wSA((\x_A f)^\circ_{s,A})=\wSA(-\beta F_1)=0$ on $\OO$. Therefore $\mathscr S_A f=0$.
\end{proof}

\begin{remark}\label{rem:even-odd}
If $f(x)=F_{\emptyset}^A(z,y)+JF_1^A(z,y)$ with $F_{\emptyset}^A,F_1^A$ as in the Proposition, 
then $f^\circ_{s,A}(x)=F_{\emptyset}^A(z,y)$ and $(\x_Af)^\circ_{s,A}(x)=-\beta F_1^A(z,y)$. 
In particular, $f^\circ_{s,A}$ and ${(\x_A f)}^\circ_{s,A}$ are constant on $\s_x\cap M_A$ for every $x\in\OO\cap M_A$. Moreover, this shows that $F_{\emptyset}^A$ and $F_1^A$ are uniquely determined by $f$.
\end{remark}

\begin{remark}
If $A$ has the form $\{n-\ell+1,\ldots,n\}$, with $\ell>0$, then a function $f$ as in Proposition \ref{pro:SA} is the \emph{$T$-function} induced by the \emph{$T$-stem function} with components $F_{\emptyset}^A, F_1^A$, a concept introduced in \cite[\S3]{GhiloniStoppatoJGP} and investigated in \cite{GhiloniStoppato_arXiv24}.
\end{remark}

We now establish two general properties of the operators $\mathscr S_A$ that will be useful in the sequel.

\begin{proposition}\label{pro:kerS}
Let $\OO\subseteq M$ be open. If $B\subseteq A\subseteq [n]$ and $\OO$ is $A$-circular, then $\ker\mathscr S_B\supseteq\ker\mathscr S_A$. In particular, if $\OO$ is axially symmetric and $f\in\SL^1(\OO)$ is a slice function, then $\mathscr S_A f=0$ on $\OO$ for every $A\subseteq [n]$.  
\end{proposition}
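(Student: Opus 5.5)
The natural route is through the characterization of $\ker\mathscr S_A$ in Proposition \ref{pro:SA}. Take $f\in C^1(\OO,\Aa)$ with $\mathscr S_A f=0$ on the $A$-circular set $\OO$. Proposition \ref{pro:SA} gives
\[
f(x)=F_\emptyset^A(z,y)+JF_1^A(z,y), \qquad \x_A=\beta J,
\]
with $F_\emptyset^A$ even and $F_1^A$ odd in $\beta$. By Remark \ref{rem:even-odd}, equivalently $f^\circ_{s,A}$ and $(\x_Af)^\circ_{s,A}$ are constant along the spheres $\s_{x,A}$, that is, invariant under every orthogonal transformation of $M_A\cap\ker(t)$ acting on $\x_A$.

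The aim is to verify $\SB f=\SB'f=\SB''f=0$ directly. Write $\x_A=\x_B+\x_{A\setminus B}$. The key structural fact is that $g:=f^\circ_{s,A}$ and $h:=-\beta F_1^A=(\x_Af)^\circ_{s,A}$ depend on the $A$-coordinates only through $x_0$, $\|\x_A\|^2$ and $\x_{A^c}$. In particular they are $r_i$-invariant for every $i\in A$, hence for every $i\in B$. We then have
\[
f=g+\x_A\|\x_A\|^{-2}(-h)=g+\|\x_A\|^{-2}\bigl(\x_B(-h)+\x_{A\setminus B}(-h)\bigr).
\]

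Next, compute the $B$-operators on each piece. A function $\phi$ that is $r_i$-invariant for $i\in B$ and depends on the $B$-variables only through $\|\x_B\|^2$ satisfies $T_{\B,i}\phi=\partial_{x_i}\phi$ for $i\in B$. Exactly as in \eqref{eq:sagamma}, this gives $\SB\phi=-\Gamma_{M_B}\phi=0$, and also $\wSB\phi=0$.

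For the term $\x_B\phi$ we use $\{\SB,\x_B\}=0$ from Remark \ref{rem:commutation}, which gives $\SB(\x_B\phi)=-\x_B\SB\phi=0$. The term $\x_{A\setminus B}\phi$ needs more care: $\x_{A\setminus B}$ involves units $v_j$ with $j\notin B$, so we check that $\SB=\x_B\DD_B+\Eu_B$ annihilates it. Since $\phi$ is $B$-radial, the terms reduce to $\sum_{i<j\in B}v_i(v_j L_{ij}(\cdot))$ applied to $v_k\phi$ with $k\notin B$. Because $L_{ij}\phi=0$ and $L_{ij}$ commutes with multiplication by the constant $v_k$, every term vanishes.

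For $\SB'$ and $\SB''$ we compute $f^\circ_{s,B}$ and $(\x_Bf)^\circ_{s,B}$. Under $r_B$, $\x_B\mapsto-\x_B$ while $g$, $h$, $\|\x_A\|$ and $\x_{A\setminus B}$ are fixed. Hence
\[
f^\circ_{s,B}=g-\|\x_A\|^{-2}\x_{A\setminus B}h,
\]
which is $r_i$-invariant for $i\in B$, so $\wSB$ kills it by \eqref{eq:tildeS}. Similarly,
\[
(\x_Bf)^\circ_{s,B}=-\|\x_A\|^{-2}\x_B^2h=\|\x_A\|^{-2}\|\x_B\|^2h,
\]
again $r_B$-invariant componentwise. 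The cross terms $\x_B\x_{A\setminus B}$ cancel in the spherical value because they are odd under $r_B$. One must check this cancellation carefully in the nonassociative setting using \eqref{eq:anticommute}.

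The main obstacle is regularity on $\{\x_A=0\}$, where the expressions involving $\|\x_A\|^{-2}$ are singular. I would prove all identities on $\OO\setminus\{\x_A=0\}$ and then extend by continuity. This works because the $\mathscr S_B f$ are continuous and $\{\x_A=0\}$ has empty interior, since $A\ne\emptyset$; the case $A=\emptyset$ is trivial. A further issue is that $\OO$ must be $B$-circular. This follows from $A$-circularity, since the orthogonal group of $M_B\cap\ker(t)$ embeds in that of $M_A\cap\ker(t)$ and preserves $\|\x_A\|$.

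For the last claim, an axially symmetric $\OO$ is $[n]$-circular and $\SL^1(\OO)\subseteq\ker\mathscr S_{[n]}$. This follows from Propositions \ref{pro:kernels} and \ref{pro:sliceoperators}, whose hypotheses match $\gamma_{[n]}=1/2$. Applying the first part with $B=A\subseteq[n]$ then gives $\mathscr S_Af=0$ for every $A\subseteq[n]$.
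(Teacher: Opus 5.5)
Your proposal is correct and is the paper's own argument made explicit. The paper also starts from the representation $f=F_\emptyset^A+\beta^{-1}\x_A F_1^A$ of Proposition \ref{pro:SA}, and then only invokes the converse part of that proof, noting that the components are $r_i$-invariant for $i\in B$ and that $\s_{M_B}\subseteq\s_{M_A}$. You carry this out by splitting $\x_A=\x_B+\x_{A\setminus B}$ and checking $\underline S_B$, $\underline S_B'$, $\underline S_B''$ term by term, and your treatment of the set $\{\x_A=0\}$ and of $B$-circularity fills in details the paper leaves implicit.

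As in the paper, the argument tacitly needs $B\neq\emptyset$ (and $A\neq\emptyset$ in the final claim), since $\ker\mathscr S_\emptyset=\{0\}$.
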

\begin{proof}
If $\mathscr S_A f=0$, from Proposition \ref{pro:SA} we get
\[
f(x)=F_{\emptyset}^A(z,y)+JF_1^A(z,y)=F_{\emptyset}^A(z,y)+\beta^{-1}\x_A F_1^A(z,y)
\]
for every $x\in\OO$. Since $F_{\emptyset}^A$ and $F_1^A$ are $r_i$-invariant for every $i\in B$ and $\s_{M_B}\subseteq \s_{M_A}$, we can use the same arguments of the converse part of the proof of Proposition \ref{pro:SA} and obtain that $f\in\ker\mathscr S_B$. 
\end{proof}

\begin{proposition}[Commutation relations]\label{pro:Commutation relations}
Let $A,B\subseteq[n]$. If $A\cap B=\emptyset$, then it holds
\begin{align*}
&\big\{\x_A,\x_B\big\}=0,\ \big[\Eu_A,\x_B\big]=0,\ \big\{\DD_A,\x_B\big\}=0,\ \big[\SA,\x_B\big]=0,\ 
\big\{\DD_A,\DD_B\big\}=0,\ \big[\SA,\DD_B\big]=0,
\\
&\big[\wSA,\x_B\big]=0,\ \big[\wSA r_A,\x_B\big]=0,\ \big[\SA',\x_B\big]=0,\ \big\{\SA'',\x_B\big\}=0,\ \big[\mathscr S_A,r_B\big]=0.   
\end{align*}
\end{proposition}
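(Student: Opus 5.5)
The plan is to derive everything from three elementary facts, valid for every $i\in A$, $j\in B$ (so $i\ne j$). First, \eqref{eq:anticommute} gives $v_i(v_j a)=-v_j(v_i a)$ for every $a\in\Aa$. Second, $T_{\B,i}$ commutes with multiplication by $x_j$ and by any constant, as well as with $r_j$ and $\partial_{x_j}$. Third, $r_i$ commutes with multiplication by $x_j$. All the listed relations are then checked termwise on a function $f$.

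The basic relations come first.
\begin{itemize}
\item For $\{\x_A,\x_B\}$, expand to get $\sum_{i\in A,j\in B}x_ix_j\bigl(v_i(v_jf)+v_j(v_if)\bigr)=0$.
\item The relation $[\Eu_A,\x_B]=0$ holds because $\x_B$ does not involve the variables $x_i$, $i\in A$.
\item For $\{\DD_A,\x_B\}$, write $v_iT_{\B,i}(x_jv_jf)=x_jv_i(v_jT_{\B,i}f)$ using the second fact, and compare with $x_jv_j(v_iT_{\B,i}f)$; the two cancel by \eqref{eq:anticommute}.
\item $\{\DD_A,\DD_B\}=0$ follows in the same way, using that $T_{\B,i}$ and $T_{\B,j}$ commute.
\end{itemize}

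Next come the relations involving $\SA$. By \eqref{eq:SA}, $\SA=\x_A\DD_A+\Eu_A$. Then
\[
[\SA,\x_B]=\x_A\DD_A\x_B-\x_B\x_A\DD_A=-\x_A\x_B\DD_A-\x_B\x_A\DD_A=-\{\x_A,\x_B\}\DD_A=0,
\]
where the second equality uses $\DD_A\x_B=-\x_B\DD_A$ and the third uses $[\Eu_A,\x_B]=0$. The computation for $[\SA,\DD_B]$ is identical: $\DD_B$ anticommutes with both $\x_A$ and $\DD_A$, and it commutes with $\Eu_A$, since $T_{\B,j}$ commutes with $x_i\partial_{x_i}$ for $j\ne i$.

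For $\wSA=\sum_{i\in A}k_i(1-r_i)$ (the analogue of \eqref{eq:tildeS}), note that $\wSA$ acts componentwise with real coefficients and $r_i$ fixes $x_j$. Hence $\wSA$ commutes with left multiplication by $x_jv_j$, which gives $[\wSA,\x_B]=0$. Since $r_A$ likewise commutes with $\x_B$, we also get $[\wSA r_A,\x_B]=0$, and then $[\SA',\x_B]=0$.

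Two relations need more care. The first is $\SA''=\SA'\x_A$. Here $\{\SA'',\x_B\}=\SA'(\x_A\x_B+\x_B\x_A)$ after moving $\x_B$ through $\SA'$, and this vanishes by the first relation. The second is $[\mathscr S_A,r_B]=0$. The reflection $r_B$ commutes with $x_i$, $\partial_{x_i}$, $T_{\B,i}$ and $r_i$ for all $i\in A$, and with left multiplication by constants. It therefore commutes componentwise with $\SA$, $\SA'$ and $\SA''$.

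The only real obstacle is the nonassociative bookkeeping. In each product, the constant coefficients must be kept in nested form $v_i(v_j\,\cdot)$ so that \eqref{eq:anticommute} applies exactly as in the proof of Proposition~\ref{pro:orsted}. In particular, the relation $\x_A(\x_B f)=-\x_B(\x_A f)$ must be checked in nested form. It holds because each term $x_ix_j\,v_i(v_jf)$ pairs with $-x_ix_j\,v_j(v_if)$.
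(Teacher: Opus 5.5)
Your proposal is correct and follows the paper's proof essentially step by step. You check the elementary anticommutation relations termwise via \eqref{eq:anticommute}, obtain the $\SA$ relations from \eqref{eq:SA} together with the first three, handle $\SA''$ by factoring out $\SA'$ and using $\{\x_A,\x_B\}=0$, and treat the rest as componentwise commutations (the only cosmetic slip is that in your display for $[\SA,\x_B]$ it is the first equality, not the third, that uses $[\Eu_A,\x_B]=0$).
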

\begin{proof}
The first two equalities are trivial. To prove the third, we apply \eqref{eq:anticommute} and compute
\[
\DD_A(\x_Bf)=\sum_{i\in A}v_i T_{\B,i}(\x_B f)=\sum_{i\in A}v_i (\x_B T_{\B,i}f)=-\x_B\sum_{i\in A}v_i T_{\B,i}f=-\x_B\DD_A f.
\]
A similar direct computation gives the equality $\{\DD_A,\DD_B\}=0$. 
Using \eqref{eq:SA} and the first three equalities, we obtain that $[\SA,\x_B]=0$ and $[\SA,\DD_B]=0$.

The first two equalities of the second row are trivial and imply the third $[\SA',\x_B]=0$. Using $\{\x_A,\x_B\}=0$, we get that $\{\wSA\x_A,\x_B\}=0$ and $\{\wSA r_A\x_A,\x_B\}=0$, which yields the equality $\{\SA'',\x_B\}=0$. The last commutation relation is trivial.
\end{proof}

\subsection{Examples: quaternions, Clifford algebras, octonions}\label{sub:examples}

\subsubsection{Quaternions}

\begin{proposition}\label{pro:Dunkl_H}
If $M=\hh$ and $\B=(1,i,j,k)$, then $n=3$ and there are five (three essentially inequivalent) 
real vector spaces of $A$-Dunkl-regular functions  on an open set $\OO\subseteq\hh$, excluding the trivial case $\F_\emptyset(\OO)=\{0\}$:
\begin{itemize}
  \item[(i)] $|A|=1$. The spaces $\F_{\{1\}}(\OO)=\F_{\{2\}}(\OO)=\F_{\{3\}}(\OO)=\ker\dibar_{\hh}$ are the space of Fueter-regular functions on $\OO$.

  \item[(ii)] $|A|=2$. Let $k_2,k_3\le0$ and $k_2+k_3=-1/2$. Let $\OO$ be $A$-circular.  
  The space $\F_{\{2,3\}}(\OO)$ is the set of $(1,3)$-regular functions of class $C^1$ on $\OO$. 
  This family of functions was introduced in \cite[Example\ 3.10 and Definition\ 3.13]{GhiloniStoppatoJGP}.
  \\
  The spaces $\F_{\{1,2\}}(\OO)$ and $\F_{\{1,3\}}(\OO)$ are isomorphic to $\F_{\{2,3\}}(\OO)$ as real vector spaces. They can be obtained in the same way as $\F_{\{2,3\}}(\OO)$ starting from the hypercomplex bases $\B'=(1,k,i,j)$ and $\B''=(1,j,i,-k)$ respectively. 

  \item[(iii)] $|A|=3$.  If $\OO$ is axially symmetric, $k_i\le0$ for every $i=1,2,3$, $k_i=0$ for at most one index $i$ and $\sum_{i=1}^3 k_i=-1$, then $\F_{[3]}(\OO)=\sr(\OO)$. 
\end{itemize}
\end{proposition}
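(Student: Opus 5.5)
The plan is to treat the three cardinalities separately, relying on the general results already established for $\F_A(\OO)$. The counting is immediate: excluding $A=\emptyset$ (Remark \ref{rem:FA}(b)), the nonempty subsets of $[3]$ give three singletons, three pairs and $[3]$ itself. Case (i) yields one common space, case (ii) three mutually isomorphic spaces, and case (iii) one space. Hence there are five distinct spaces, of which three are essentially inequivalent.

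For (i), I will apply Remark \ref{rem:FA}(c) directly. For $A=\{i\}$, the standing assumption $\sum_{i\in A}k_i=(1-|A|)/2$ forces $k_i=0$, and $\mathscr S_{\{i\}}=0$. Consequently $D_{\{i\}}=\dibar_{\hh}$ and $\F_{\{i\}}(\OO)=\M(\OO)$, which for $M=\hh$ is by definition the space of Fueter-regular functions.

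For (iii), $A=[3]$ gives $D_{[3]}=D_\B$ and $\mathscr S_{[3]}f=(\sC f,\sC' f,\sC'' f)$. The hypotheses match those of Theorem \ref{teo:slice_regularity}, since $(1-n)/2=-1$ when $n=3$. That theorem says exactly that $f\in\ker D_\B\cap\ker\sC\cap\ker\sC'\cap\ker\sC''$ if and only if $f\in\sr(\OO)$, so $\F_{[3]}(\OO)=\sr(\OO)$.

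For (ii), I take $A=\{2,3\}$ and use Proposition \ref{pro:SA}. It shows that $\mathscr S_A f=0$ holds exactly when $f(x)=F_\emptyset(z,y)+JF_1(z,y)$, where $x=x_0+y\,i+\beta J$, $J\in\s_{M_A}$ is a unit in $\Span(j,k)$, and $(F_\emptyset,F_1)$ is an even/odd pair in $\beta$. This is precisely a $T$-function in the sense of Ghiloni–Stoppato, with the partition $\{1\},\{2,3\}$ of the imaginary coordinates. Next I compute $D_A$ on such $f$. Because $F_\emptyset$ and $F_1$ are $r_2,r_3$-invariant, Proposition \ref{pro:equality} can be adapted to $M_A$: \eqref{eq:SA} together with $\SA f=0$ gives $\DD_A f=-\x_A^{-1}\Eu_A f$ off $\{\x_A=0\}$. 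Therefore
\[
D_A f=\partial_{x_0}f+i\,\partial_{x_1}f-\x_A^{-1}\Eu_A f .
\]
Writing this out in terms of $(z,y)$ and $J$ should reproduce the Cauchy–Riemann-type system $(\partial_{x_0}+J\partial_\beta)$ plus the $i\partial_y$ term that defines $(1,3)$-regularity in \cite[Def.\ 3.13]{GhiloniStoppatoJGP}. The isomorphisms with $\F_{\{1,2\}}(\OO)$ and $\F_{\{1,3\}}(\OO)$ then come from relabeling: the bases $\B'=(1,k,i,j)$ and $\B''=(1,j,i,-k)$ are hypercomplex bases, and in them the pair occupying positions $2,3$ is $\{i,j\}$, respectively $\{i,-k\}$. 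Composing with the corresponding orthogonal change of coordinates, which leaves $\dibar_\hh$ invariant by the Remark after Definition \ref{def:db}, transports one space to the other.

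I expect the main obstacle to be the identification in (ii) with the Ghiloni–Stoppato definition. This requires checking that their $T$-regularity (holomorphy of the $T$-stem function with respect to their complex structures, or vanishing of their $T$-Cauchy–Riemann operator) agrees with $D_A f=0$. Particular care is needed on the set $\{\x_A=0\}$, where the formula involving $\x_A^{-1}$ breaks down. There I will argue by continuity of $D_Af$ (the $C^1$ assumption) together with density of $\{\x_A\neq0\}$ in the $A$-circular open set $\OO$.
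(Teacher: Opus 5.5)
Your treatment of (i), (iii), and the identification of $\F_{\{2,3\}}(\OO)$ with the $C^1$ $(1,3)$-regular functions is the paper's own proof. It uses Remark~\ref{rem:FA}(c), Theorem~\ref{teo:slice_regularity}, and Proposition~\ref{pro:SA}, together with the identity $\DD_{\{2,3\}}f=-\x_{\{2,3\}}^{-1}\Eu_{\{2,3\}}f=J\partial_\beta f$ coming from \eqref{eq:SA}, which gives $D_{\{2,3\}}f=(\partial_{x_0}+i\partial_{x_1}+J\partial_\beta)f$. Your continuity argument across $\{\x_{\{2,3\}}=0\}$ fills a point the paper passes over.

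The step that does not work as written is the isomorphism with $\F_{\{1,2\}}(\OO)$ and $\F_{\{1,3\}}(\OO)$; the paper asserts this without argument.

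\begin{itemize}
\item Write $D_{\{1,2\}}=\dibar_\hh+\sum_{i=1,2}\frac{k_iv_i}{x_i}(1-r_i)$, as in \eqref{eq:db}, and rewrite it in the basis $\B'$ using the basis-invariance of $\dibar_\hh$. This only shows $\F^{\B}_{\{1,2\}}(\OO)=\F^{\B'}_{\{2,3\}}(\OO)$, which is the ``obtained in the same way from $\B'$'' part of the statement.
\item A change of variables alone does not carry this space onto $\F^{\B}_{\{2,3\}}$, because the units multiplying the derivatives are not moved. For example, $x_0+ix_1\in\F_{\{2,3\}}(\hh)$. Substituting the $\B'$-coordinates turns it into $x_0+ix_3$, and $D_{\{1,2\}}(x_0+ix_3)=1+ki=1+j\neq0$.
\item You must also act on the values with the $*$-automorphism $\phi$ of $\hh$ that sends $\B'$ onto $\B$. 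For $\B'$ this is $\phi(k)=i$, $\phi(i)=j$, $\phi(j)=k$; for $\B''$ it is $\phi(j)=i$, $\phi(i)=j$, $\phi(-k)=k$.
\item Then use the map $g\mapsto\phi\circ g\circ\phi^{-1}$. Since $\phi(v'_m a)=v_m\phi(a)$, this map intertwines $D_A$ and left multiplication by $\x_A$ for $\B'$ with the corresponding operators for $\B$, and hence also $\mathscr S_A$. For the example above, $\phi^{-1}(x_0+ix_3)=x_0+kx_3$, and $D_{\{1,2\}}(x_0+kx_3)=1+k^2=0$.
\end{itemize}

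The result is $\F_{\{1,2\}}(\OO)\cong\F_{\{2,3\}}(\phi(\OO))$; note that the domain moves as well. This is exactly why $\B''$ contains $-k$ rather than $k$. Like $(k,i,j)$, the triple $(j,i,-k)$ satisfies $v_1v_2=v_3$, so $\phi$ is an algebra automorphism. For $(j,i,k)$ no such automorphism exists, since the corresponding map reverses orientation on $\IM\hh$.
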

\begin{proof}
Points (i) and (iii) are immediate from Remark\ \ref{rem:FA}(c) and Theorem\ \ref{teo:slice_regularity}. 
We prove (ii). Let $J\in\s_\hh\cap\langle j,k\rangle\simeq S^1$ and consider the hypercomplex subspace $M_J=\langle 1,i,J\rangle$, with real coordinates $(x_0,x_1,\beta)$ w.r.t.\ the basis $\B_J=(1,i,J)$. 
The Cauchy-Riemann operator of $M_J$ is
\[
\dibar_{M_J}=\partial_{x_0}+i\partial_{x_1}+J\partial_\beta=\dibar_J,
\]
where the notation $\dibar_J$ is that used in \cite{GhiloniStoppatoJGP}. A function $f$ is $(1,3)$-regular on $\OO$ if and only if for every $J\in\s_\hh\cap\langle j,k\rangle$, the restriction of $f$ to $\OO\cap M_J$ is monogenic, namely, $\dibar_Jf(x_0+ix_1+J\beta)=0$ on $\OO\cap M_J$.  

Suppose that $f\in\F_{\{2,3\}}(\OO)$. 
Since $\underline S_{\{2,3\}}f=0$, from \eqref{eq:SA} we get, for $\x_A=x_2j+x_3k\ne0$,
\begin{equation}\label{eq:DD}
\DD_{\{2,3\}}f=-\x_{\{2,3\}}^{-1}\Eu_{\{2,3\}}f=\tfrac{x_2j+x_3k}{x_2^2+x_3^2}(x_2\partial_{x_2}f+x_3\partial_{x_3}f)=J\partial_\beta f,  
\end{equation}
where $J=\text{sign}(\beta)\tfrac{x_2j+x_3k}{\|x_2j+x_3k\|}$. 
Therefore $0=D_{\{2,3\}}f=(\partial_{x_0}+i\partial_{x_1}+\DD_{\{2,3\}})f=\dibar_J f$ for every $J$, and $f$ is $(1,3)$-regular. 

Let $\OO=\OO_E\subseteq\hh$ be $A$-circular, with $A=\{2,3\}$. We show that $f$ is a $(1,3)$-function \cite[\S4]{GhiloniStoppatoJGP}. Since $\mathscr S_{A}f=0$, 
from Proposition \ref{pro:SA} we get continuous functions 
$F_\emptyset,F_1:E'\times E''\subseteq\cc\times\rr\to\Aa$ such that for every $x=x_0+ix_1+\beta J\in\OO$, it holds
\[
f(x)=F_\emptyset(x_0+i\beta,x_1)+JF_1(x_0+i\beta,x_1),
\]
where $x_0+i\beta\in E'$.  
Therefore $f$ is induced by the $(1,3)$-stem function with continuous components $F_\emptyset$ and $F_1$ (we refer to \cite[\S4]{GhiloniStoppatoJGP} for notation). 

Conversely, if $f$ is a $(1,3)$-regular function, then in view of Remark \ref{rem:even-odd}, $f^\circ_{s,A}$ and ${(\x_A f)}^\circ_{s,A}$ 
satisfy the conditions of Proposition \ref{pro:SA}, that gives $\mathscr S_{\{2,3\}}f=0$. From this we get, as in \eqref{eq:DD}, that $D_{\{2,3\}}f(x_0+ix_1+J\beta)=\dibar_J f(x_0+ix_1+J\beta)=0$ for every $J\in\s_\hh\cap\langle j,k\rangle$, i.e., $D_{\{2,3\}}f=0$ on $\OO$. Therefore $f\in\F_{\{2,3\}}(\OO)$. 
\end{proof}


The space $\F_{\{2,3\}}(\OO)$ contains isomorphic copies of $\M(\OO\cap\langle 1,i\rangle,\hh)=Hol(\OO\cap\cc,\hh)$ and of $\sr(\OO\cap\langle1,j,k\rangle)$ (see Remark \ref{rem:FAsubsets}). 

\begin{example}\label{ex:linear}
A $\rr$-linear polynomial $f=a_0x_0+a_1x_1+a_2x_2+a_3x_3$, with $a_\ell\in\hh$, belongs to $\F_{\{2,3\}}(\hh)$ if and only if it is a right $\hh$-linear combination of $f_1=x_0+ix_1$ and $f_2=x_0+jx_2+kx_3$. 
Observe that $f_1\in Hol(\cc,\hh)$ and $f_2\in\sr(\langle1,j,k\rangle)$. Since
\begin{align*}
\DD_{\{2,3\}}&=j\left(\partial_{x_2}+\tfrac{k_2}{x_2}(1-r_2)\right)+k\left(\partial_{x_3}+\tfrac{k_3}{x_3}(1-r_3)\right),
\\
\underline S_{\{2,3\}}&=\x_{\{2,3\}}\DD_{\{2,3\}}+\Eu_{\{2,3\}},
\end{align*}
with $k_2+k_3=-1/2$, it holds
\begin{align*}
D_{\{2,3\}}f&=(\partial_{x_0}+i\partial_{x_1})f+\DD_{\{2,3\}}f=(a_0+ia_1)+ja_2(1+2k_2)+ka_3(1+2k_3),\\
\underline S_{\{2,3\}}f&=\x_{\{2,3\}}\DD_{\{2,3\}}f+\Eu_{\{2,3\}}f=\x_{\{2,3\}}\DD_{\{2,3\}}f+a_2x_2+a_3x_3.
\end{align*}
Therefore $D_{\{2,3\}}f=\underline S_{\{2,3\}}f=0$ if and only if $\DD_{\{2,3\}}f=-(a_0+ia_1)=:-a$ and $-(jx_2+kx_3)a+a_2x_2+a_3x_3=0$. This last condition implies that $a_2=ja$, $a_3=ka$. Therefore
\[
f=a_0x_0+a_1x_1+jax_2+kax_3=(x_0+ix_1)(a_0-a)+(x_0+jx_2+kx_3)a
\]
Since $f_1,f_2$ are polynomials, the conditions $D_{\{2,3\}}f_i=\underline S_{\{2,3\}}f_i=0$ imply, in view of Theorems \ref{teo:poly_slice_regularity} and \ref{teo:slice_regularity}, that $\mathscr S_{\{2,3\}} f_i=0$  for $i=1,2$, i.e., $f_1,f_2\in\F_{\{2,3\}}(\hh)$. 
\end{example}

\subsubsection{Reduced quaternions}

\begin{proposition}
If $M=\hh_r=\langle 1,i,j\rangle$ and $\B=(1,i,j)$, then $n=2$ and there are two real vector spaces of $A$-Dunkl-regular functions  on an open set $\OO\subseteq\hh_r$, excluding the trivial case $\F_\emptyset(\OO)=\{0\}$:
\begin{itemize}
  \item[(i)] $|A|=1$. The spaces $\F_{\{1\}}(\OO)=\F_{\{2\}}(\OO)=\ker\dibar_{\B}$ are the space $\M(\OO)$ of monogenic functions on $\OO$ when $\hh_r$ is identified with the paravector space of $\rr_2\simeq\hh$. 

  \item[(ii)] $|A|=2$.  If $\OO$ is axially symmetric, $k_1,k_2\le0$ and $k_1+k_2=-1/2$, then $\F_{[2]}(\OO)=\sr(\OO)$. 
\end{itemize}
\end{proposition}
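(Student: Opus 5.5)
The statement follows directly from earlier results by a case split on $A\subseteq[2]$. The case $A=\emptyset$ is excluded since $\F_\emptyset(\OO)=\{0\}$ by Remark \ref{rem:FA}(b). What remains is to treat $|A|=1$ and $A=[2]$, and to check that the nontrivial spaces are exactly the two listed.

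For (i), let $A=\{1\}$ or $A=\{2\}$. The standing assumption $\sum_{i\in A}k_i=(1-|A|)/2$ forces $k_i=0$. Hence $D_{\{i\}}=\partial_{x_0}+i\partial_{x_1}+j\partial_{x_2}=\dibar_\B$, which is the Cauchy-Riemann operator $\dM$ of $\hh_r$ (Definition \ref{def:dM}). We also have $\mathscr S_{\{i\}}=0$ by the direct computation after Remark \ref{rem:commutation}. Remark \ref{rem:FA}(c) then gives $\F_{\{1\}}(\OO)=\F_{\{2\}}(\OO)=\ker\dM=\M(\OO)$.

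The identification with Clifford monogenic functions is as follows. Identify $\hh_r$ with the paravectors of $\rr_2$ via $e_1\mapsto i$, $e_2\mapsto j$. Since $\rr_2=Cl(0,2)\simeq\hh$ as real $*$-algebras with $e_1e_2\mapsto k$, this map carries $\dM$ to the Clifford Cauchy-Riemann operator $\partial_{x_0}+e_1\partial_{x_1}+e_2\partial_{x_2}$ acting on $\rr_2$-valued functions. Under this identification, $\M(\OO)$ is precisely the space of Clifford monogenic functions.

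For (ii), let $A=[2]$, let $\OO$ be axially symmetric, and take $k_1,k_2\le0$ with $k_1+k_2=-1/2=(1-n)/2$. Since $n=2$, at most one multiplicity can vanish: both vanishing would make the sum $0$. So the hypotheses of Theorem \ref{teo:slice_regularity} are satisfied. Here $D_{[2]}=D_\B$ and $\mathscr S_{[2]}=(\sC,\sC',\sC'')$, so that theorem gives $\F_{[2]}(\OO)=\sr(\OO)$ directly.

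This argument assumes $\OO$ is $\zz_2^2$-invariant, which holds because axially symmetric sets are $[2]$-circular and therefore $[2]$-symmetric. There is no real obstacle in this proof. The only point needing care is the verification of the algebra isomorphism $\rr_2\simeq\hh$ in (i), and confirming that in (ii) the condition "at most one vanishing multiplicity" holds automatically.
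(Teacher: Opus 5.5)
Your proposal is correct and follows the paper's proof, which simply invokes Remark~\ref{rem:FA}(c) for (i) and Theorem~\ref{teo:slice_regularity} for (ii). Your extra checks are valid and make explicit what the paper leaves implicit: $k_i=0$ forces $D_{\{i\}}=\dM$, at most one multiplicity can vanish since $k_1+k_2=-1/2$, and $\mathscr S_{[2]}=(\sC,\sC',\sC'')$.
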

\begin{proof}
Points (i) and (ii) follow immediately from Remark\ \ref{rem:FA}(c) and Theorem\ \ref{teo:slice_regularity}. 
\end{proof}


\subsubsection{Moisil-Teodorescu regularity}

If one takes the hypercomplex subspace $M'=\langle 1,j,k\rangle$ of $\hh$, with basis $\B'=(1,-k,j)$, we have again $n=2$ and $\dibar_{\B'}=\partial_{x_0}-k\partial_{x_1}+j\partial_{x_2}$ is a multiple of the \emph{Moisil-Teodorescu operator} $\mathcal D_{MT}$ \cite{MT1931} in the variables $x_0,x_1,x_2$: namely,
\[
\mathcal D_{MT}=i\partial_{x_0}+j\partial_{x_1}+k\partial_{x_2}=i\dibar_{\B'}.
\]
It follows that $\F_{\{1\}}(\OO)=\F_{\{2\}}(\OO)=\ker\mathcal D_{MT}$ for $\OO\subseteq M'\simeq\rr^3$. 

\subsubsection{Clifford algebras $\rr_n$}

\begin{proposition}\label{pro:FA_Clifford}
If $M=\rr^{n+1}\subseteq\rr_n$ is the paravector space with basis $\B=(1,e_1,\ldots,e_n)$, then we have the following spaces of $A$-Dunkl-regular functions on an open set $\OO\subseteq M$, excluding the trivial case $\F_\emptyset(\OO)=\{0\}$:
\begin{itemize}
  \item[(i)] $|A|=1$. The spaces $\F_{\{1\}}(\OO)=\cdots=\F_{\{n\}}(\OO)=\ker\dibar_\B$ are the space $\M(\OO)$ of monogenic functions on $\OO$.

  \item[(ii)] $1<|A|<n$. Let $k_i\le0$ for all $i\in A$ with $k_i=0$ for at most one $i\in A$ and $\sum_{i\in A}k_i=(1-|A|)/2$. Let $\OO$ be $A$-circular.  
  If $A=\{n-\ell+1,\ldots,n\}$, with $\ell=|A|$, 
  then the space $\F_{A}(\OO)$ is the set of $T$-regular functions of class $C^1$ on $\OO$, with $T=(n-|A|,n)$. 
  (see \cite[\S3]{GhiloniStoppatoJGP} and \cite{GhiloniStoppato_arXiv24}). Equivalently, $f\in\F_A(\OO)$ if and only if $f$ is a generalized partial-slice monogenic function of type $(n-|A|,|A|)$, a concept introduced in \cite{Sabadini_Xu_TAMS}.
  \\
  If $|A|=\ell$ but $A\not=A_\ell:=\{n-\ell+1,\ldots,n\}$, the space $\F_A(\OO)$ is isomorphic to $\F_{A_\ell}(\OO)$ through 
  a  variables permutation of $M=\rr^{n+1}$ that sends a hypercomplex basis of $M_A$ onto the hypercomplex basis $(1,e_{n-\ell+1},\ldots,e_n)$ of $M_{A_\ell}$.

  \item[(iii)] $|A|=n$.  If $\OO$ is axially symmetric, $k_i\le0$ for every $i$, $k_i=0$ for at most one index $i$ and $\sum_{i=1}^n k_i=(1-n)/2$, then $\F_{[n]}(\OO)=\sr(\OO)=\mathcal{SM}(\OO)$, the space of slice-monogenic functions on $\OO$.
\end{itemize}
The number of distinct spaces $\F_A(\OO)$, excluding $\F_\emptyset(\OO)=\{0\}$, is $2^n-n$. Of these, only $n$ are  essentially inequivalent. 
\end{proposition}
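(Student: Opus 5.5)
The plan is to handle the three cases separately, following the quaternionic case (Proposition \ref{pro:Dunkl_H}), and then count.

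\emph{Cases (i) and (iii).} Case (i) is Remark \ref{rem:FA}(c): for $A=\{i\}$ the constraint $\sum_{i\in A}k_i=(1-|A|)/2$ forces $k_i=0$ and gives $\mathscr S_{\{i\}}=0$, so $D_{\{i\}}=\dibar_\B$. Hence $\F_{\{i\}}(\OO)=\M(\OO)$. Case (iii) is Theorem \ref{teo:slice_regularity} with $M=\rr^{n+1}$, combined with the standard fact that on paravectors of $\rr_n$ slice-regular and slice-monogenic functions coincide on axially symmetric domains.

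\emph{Case (ii), $A=A_\ell=\{n-\ell+1,\ldots,n\}$.} For $J\in\s_{M_A}$ put $M_J=\langle 1,e_1,\ldots,e_{n-\ell},J\rangle$ with coordinates $(x_0,x_1,\ldots,x_{n-\ell},\beta)$. A $T$-regular function with $T=(n-\ell,n)$ (equivalently, a generalized partial-slice monogenic function of type $(n-\ell,\ell)$) is a $T$-function whose restrictions to each $M_J$ lie in the kernel of
\[
\dibar_J=\partial_{x_0}+\textstyle\sum_{i\le n-\ell}e_i\partial_{x_i}+J\partial_\beta .
\]

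\emph{Inclusion $\F_A(\OO)\subseteq$ $T$-regular.} Take $f\in\F_A(\OO)$. Since $\underline S_A f=0$, \eqref{eq:SA} gives
\[
\DD_Af=-\x_A^{-1}\Eu_Af=J\partial_\beta f \quad\text{where } \x_A\neq0 .
\]
This is the same computation as \eqref{eq:DD}, using $\x_A^{-1}=-\x_A/\|\x_A\|^2$ and $\Eu_A=\beta\partial_\beta$. It follows that $D_Af=\dibar_Jf$ on $\OO\cap M_J$, so $D_Af=0$ is exactly $T$-regularity there. Proposition \ref{pro:SA} supplies the even/odd $T$-stem pair $F^A_\emptyset,F^A_1$, so $f$ is a $T$-function.

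\emph{Reverse inclusion.} If $f$ is $T$-regular, it is induced by an even/odd stem pair. The converse part of Proposition \ref{pro:SA} then gives $\mathscr S_Af=0$, and the computation above gives $D_Af=\dibar_Jf=0$.

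\emph{The set $\{\x_A=0\}$.} The points with $\x_A=0$ form a set of codimension $\ell\ge2$ in $\OO$. On it I will conclude by continuity, since $D_Af$ is continuous for $f\in C^1$. I expect this, together with matching the definitions in \cite{GhiloniStoppatoJGP,Sabadini_Xu_TAMS} precisely (notably the ordering of the slice variables and the role of the type $(p,q)$), to be the main obstacle. The identification with generalized partial-slice monogenic functions will be cited from the equivalence proved in \cite{GhiloniStoppato_arXiv24}.

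\emph{Case (ii), general $A$ with $|A|=\ell$.} Choose a permutation $\sigma$ of $[n]$ mapping $A$ onto $A_\ell$, and let $P$ be the induced coordinate permutation of $M$. The basis $(1,e_{\sigma^{-1}(1)},\ldots,e_{\sigma^{-1}(n)})$ is again a hypercomplex basis. Since $D_A$, $\mathscr S_A$ and the multiplicities are defined coordinatewise, $f\mapsto f\circ P$ intertwines $\F_{A}$ (built with respect to the permuted basis) with $\F_{A_\ell}$ (built with respect to $\B$). This is the same device as for $\F_{\{1,2\}}$ in Proposition \ref{pro:Dunkl_H}. Some care is needed because $\dibar_\B$ is basis independent, so only the Dunkl parts change.

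\emph{Counting.} There are $2^n-1$ nonempty subsets $A$, and the $n$ singletons all give $\M(\OO)$, so the number of distinct spaces is $2^n-1-(n-1)=2^n-n$. Up to equivalence, the spaces are classified by $|A|\in\{1,\ldots,n\}$, which gives $n$ classes. To show these classes are genuinely inequivalent, I would exhibit a linear polynomial separating them, as in Example \ref{ex:linear}.
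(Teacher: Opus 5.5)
Your proof of (i), (iii) and of the identification of $\F_{A_\ell}(\OO)$ with $T$-regular functions follows the paper exactly: $\DD_Af=-\x_A^{-1}\Eu_Af=J\partial_\beta f$, hence $D_Af=\dibar_Jf$, together with Proposition \ref{pro:SA} in both directions. Your continuity argument across $\{\x_A=0\}$ makes explicit a point the paper leaves tacit. Two other steps, however, do not work as written.

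\emph{The permutation step.} Composing with a coordinate permutation $f\mapsto f\circ P$ moves the variables but not the Clifford units, so it does not intertwine $D_A$ and $D_{A_\ell}$ for a fixed basis. For example, with $n=3$, the permutation carrying $(x_1,x_2)$ to $(x_2,x_3)$ turns $x_0+e_1x_1+e_2x_2\in\F_{\{1,2\}}(M)$ into $x_0+e_1x_2+e_2x_3$. But $D_{\{2,3\}}$ of the latter is $1+(1+2k_2)e_2e_1+(1+2k_3)e_3e_2\ne0$.

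Moreover, write $D^{\B}_A$ for $D_A$ computed in the basis $\B$. Your basis $\B'=(1,e_{\sigma^{-1}(1)},\ldots,e_{\sigma^{-1}(n)})$ gives $D^{\B'}_A=D^{\B}_{\sigma^{-1}(A)}$, so you have attached the bases to the wrong spaces. The correct statement, which is what the paper uses, is the operator identity $D^{\B}_A=D^{\B'}_{A_\ell}$, $\mathscr S^{\B}_A=\mathscr S^{\B'}_{A_\ell}$ (with multiplicities permuted). That is, $\F^{\B}_A(\OO)=\F^{\B'}_{A_\ell}(\OO)$, and $f\mapsto f\circ P$ is only the change of coordinates.

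If you want to land in $\F^{\B}_{A_\ell}$ for the original basis, you must also permute the units. Let $P(e_i)=e_{\sigma(i)}$ and let $\phi$ be the automorphism of $\rr_n$ extending $P$. Then $f\mapsto\phi\circ f\circ P^{-1}$ maps $\F^{\B}_A(\OO)$ onto $\F^{\B}_{A_\ell}(P(\OO))$. Such a $\phi$ exists for Clifford algebras but not in general, e.g.\ on $\oo$.

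\emph{The count.} Discarding the $n-1$ repeated singletons only shows there are \emph{at most} $2^n-n$ distinct spaces. You still need that two different nonempty sets $A,B$, not both singletons, give different spaces. This is where separating linear functions are needed, and it is how the paper gets the count (Theorem \ref{teo:classification1}, Corollary \ref{cor:equivalence}); the proof of the proposition itself does not argue it.

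Here is the argument. For every nonempty $C$ one has $x_C=x_0+\x_C\in\F_C(\OO)$, and
$\underline S_A(x_C)=\x_A\sum_{h\in A\cap C}(-1-2k_h)+\x_{A\cap C}\ne0$ whenever $\emptyset\ne A\cap C\subsetneq A$. After exchanging $A$ and $B$ you may assume $|A|\ge2$ and $A\not\subseteq B$. If $A\cap B\ne\emptyset$, then $x_B$ lies in $\F_B(\OO)\setminus\F_A(\OO)$. If $A\cap B=\emptyset$, take $x_0+e_ax_a$ with $a\in A$: it lies in $\F_B(\OO)$ since $a\notin B$, but not in $\F_A(\OO)$.

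Conversely, inequivalence of spaces with different $|A|$ needs no separating polynomial. The paper's notion of equivalence ($\B'=\B_\sigma$, $\PP'=\PP_\sigma$) preserves $|A|$ by definition.
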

\begin{proof}
Points (i) and (iii) are immediate from Remark\ \ref{rem:FA}(c) and Theorem\ \ref{teo:slice_regularity}. 

We prove the first statement of (ii), adapting the proof given in the quaternionic case (Proposition \ref{pro:Dunkl_H}) to Clifford algebras.   If $A=\{n-\ell+1,\ldots,n\}$, let $M_A:=\langle 1,e_{n-\ell+1},\ldots,e_n\rangle$. Fix an imaginary unit $J$ in the $(\ell-1)$-dimensional unit sphere $\s_{M_A}=\s_{\rr_n}\cap M_A$ of $M_A\cap\ker(t)$. 
Consider the hypercomplex subspace $M_J=\langle 1,e_1,\ldots,e_{n-\ell},J\rangle$, with real coordinates $(x_0,x_1,\ldots,x_{n-\ell},\beta)$ w.r.t.\ the basis $\B_J=(1,e_1,\ldots,e_{n-\ell},J)$. 
The Cauchy-Riemann operator of $M_J$ is
\[
\dibar_{M_J}=\partial_{x_0}+\sum_{i=1}^{n-\ell}e_i\partial_{x_i}+J\partial_\beta=\dibar_J,
\]
where the notation $\dibar_J$ is that used in \cite[\S3]{GhiloniStoppatoJGP}. A function $f$ is $T$-regular on $\OO$, with $T=(n-|A|,n)$, if and only if for every $J\in\s_{\rr_n}\cap M_A$, the restriction of $f$ to $\OO\cap M_J$ is monogenic, namely, $\dibar_Jf(x_0+e_1x_1+\cdots+e_{n-\ell}x_{n-\ell}+J\beta)=0$ on $\OO\cap M_J$.  

Given $f\in\F_A(\OO)$, it holds $\underline S_Af=0$. From \eqref{eq:SA} we get, for $\x_A=e_{n-\ell+1}x_{n-\ell+1}+\cdots+e_{n}x_{n}\ne0$, that
\begin{equation}\label{eq:DD_Clifford}
\DD_Af=-\x_A^{-1}\Eu_A f=\tfrac{\x_A}{\|\x_A\|}\sum_{j=n-\ell+1}^{n}\tfrac{x_j}{\|\x_A\|}\partial_{x_j}f=J\partial_\beta f,  
\end{equation}
where $J=\text{sign}(\beta)\tfrac{\x_A}{\|\x_A\|}\in\s_{\rr_n}\cap M_A$. 
Therefore $0=D_{A}f=(\partial_{x_0}+\sum_{i=1}^{n-\ell}e_i\partial_{x_i}+\DD_{A})f=\dibar_J f$ for every $J$, and $f$ is $T$-regular. 

We show that $f$ is a $T$-function \cite[\S3]{GhiloniStoppatoJGP}. 
Since $\mathscr S_{A}f=0$, from Proposition \ref{pro:SA} we get continuous functions 
$F_\emptyset^A,F_1^A:E'\times E''\subseteq\cc\times\rr^{n-\ell}\to\Aa$ such that for every $x=x_0+\sum_{i=1}^{n-\ell}e_ix_i+\beta J\in\OO$, it holds
\[
f(x)=F_\emptyset^A(x_0+i\beta,y)+JF_1^A(x_0+i\beta,y),
\]
where $x_0+i\beta\in E'$ and $y=(x_1,\ldots, x_{n-\ell})\in E''$. Therefore $f$ is induced by the $T$-stem function with continuous components $F_\emptyset$ and $F_A$.  


Conversely, if $f$ is a $T$-regular function, then $f^\circ_{s,A}$ and ${(\x_A f)}^\circ_{s,A}$ satisfy the conditions of Proposition \ref{pro:SA}, that gives $\mathscr S_{A}f=0$, from which we get, as in \eqref{eq:DD_Clifford}, that $D_{A}f(x_0+e_1x_1+\cdots+e_{n-\ell}x_{n-\ell}+J\beta)=\dibar_J f(x_0+e_1x_1+\cdots+e_{n-\ell}x_{n-\ell}+J\beta)=0$ for every $J\in\s_{\rr_n}\cap M_A$, i.e., $D_{A}f=0$ on $\OO$ and $f\in\F_A(\OO)$. 

Now we prove the last statement of (ii). Let $A=\{i_1,\ldots,i_\ell\}$ and $A^c=[n]\setminus A=\{j_1,\ldots,j_{n-\ell}\}$, with increasing sequences $\{i_k\}$, $\{j_k\}$. 
Consider the isomorphism $T$ of $M=\rr^{n+1}$ such that $T(1)=1$, $T(e_{i_j})=e_{n-\ell+j}$ for $j=1,\ldots,\ell$ and $T(e_{j_k})=e_{k}$ for $k=1,\ldots,n-\ell$. Then the Dunkl-Cauchy-Riemann operator $D_{A}$ w.r.t.\ the basis $\B=(1,e_1,\ldots,e_n)$ becomes the Dunkl-Cauchy-Riemann operator $D_{A_\ell}$ w.r.t.\ the basis $T^{-1}(\B)$. 
\end{proof}

\begin{example}\label{ex:axiallymonogenic}
An example of function space $\F_{A,B}(\OO)$, with $A\not=B$ (see Definition \ref{def:FAB}), is that of \emph{axially monogenic functions} on Clifford algebras.  If $\OO\subseteq\rr_n$ is axially symmetric, and if the Dunkl multiplicities $k_i$ in $\DD_{[n]}=\DD_\B$ are non-positive, with at most one zero multiplicity, and $\sum_{i=1}^nk_i=(1-n)/2$, then $\F_{\emptyset,[n]}(\OO)=\M(\OO)\cap\SL(\OO)$ is the space $\AM(\OO)$ of axially monogenic functions on $\OO$. 
\end{example}

\subsubsection{Octonions}
The algebra isomorphism $\oo\simeq\hh\oplus\hh$ provided by the Cayley-Dickson construction induces multiplication $(q_1,q_1')(q_2,q_2')=(q_1q_2-\overline{q_2'}q_1',q_2'q_1+q_1'\overline{q}_2)$ and conjugation $(q,q')^c=(\overline{q},-q')$. 
As in \cite{DentoniSce}, let $\B$ be the orthonormal basis of $\oo\simeq\R^8$ formed by elements $v_h=(e_h,0)$ for $h=0,1,2,3$ and $v_h=(0,e_{h-4})$ for $h=4,5,6,7$,  where $(e_0=1,e_1,e_2,e_3)$ is the standard basis of $\hh$.  The operator $\dB$ is the octonionic Cauchy-Riemann operator (firstly introduced by Dentoni and Sce in \cite{DentoniSce} as the \emph{Fueter-Moisil operator}) 
\[
\dibar_\oo=\partial_{x_0}+v_1\partial_{x_1}+\cdots +v_7\partial_{x_7},
\]
where $x_0,\ldots,x_7$ are the real coordinates w.r.t.\ $\B$. 
Octonionic slice-regular functions were introduced in \cite{GeStRocky}.

\begin{proposition}\label{pro:Dunkl_O}
If $M=\oo$, then $n=7$ and there are 121 (seven essentially inequivalent) 
real vector spaces of $A$-Dunkl-regular functions  on an open set $\OO\subseteq\hh$, excluding the trivial case $\F_\emptyset(\OO)=\{0\}$:
\begin{itemize}
  \item[(i)] $|A|=1$. The spaces $\F_{\{1\}}(\OO)=\cdots=\F_{\{7\}}(\OO)=\ker\dibar_{\oo}$ are the space of octonionic monogenic functions on $\OO$.

  \item[(ii)] 
  $1<|A|<7$. Let $k_i\le0$ for all $i\in A$ with at most one $i\in A$ such that $k_i=0$ and $\sum_{i\in A}k_i=(1-|A|)/2$. Let $\OO$ be $A$-circular.  
  If $A=\{8-\ell,\ldots,7\}$, with $\ell=|A|$, 
  then the space $\F_{A}(\OO)$ is the set of $T$-regular functions (\cite[\S3]{GhiloniStoppatoJGP}, \cite{GhiloniStoppato_arXiv24} and \cite{Sabadini_Xu_Octo_TAMS}) of class $C^1$ on $\OO$, with $T=(7-|A|,7)$. 
  \\
  If $|A|=\ell$ but $A\not=A_\ell:=\{8-\ell,\ldots,7\}$, the space $\F_A(\OO)$ is isomorphic to $\F_{A_\ell}(\OO)$ through 
  a  variables permutation of $\oo=\rr^{8}$ that sends a hypercomplex basis of $M_A$ onto the hypercomplex basis $(1,v_{8-\ell},\ldots,v_7)$ of $M_{A_\ell}$.  

  \item[(iii)] $|A|=7$.  If $\OO$ is axially symmetric, $k_i\le0$ for every $i=1,2,\ldots,7$, $k_i=0$ for at most one index $i$ and $\sum_{i=1}^7 k_i=-3$, then $\F_{[7]}(\OO)$ is the space $\sr(\OO)$ of octonionic slice-regular functions.
\end{itemize}
\end{proposition}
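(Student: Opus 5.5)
The plan is to follow the proof of Proposition \ref{pro:FA_Clifford} almost verbatim, since the only structural ingredients used there were the identity \eqref{eq:anticommute}, the anticommutation of the basis vectors $v_i$, $i\ge1$, and the results of \S\ref{sub:Intermediate_Dunkl-Dirac_operators}. All of these hold in any alternative $*$-algebra, and hence in $\oo$.

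\textbf{Counting.} First I count the spaces. For $|A|=1$, Remark \ref{rem:FA}(c) gives $\F_{\{i\}}(\OO)=\M(\OO)=\ker\dibar_\oo$ for every $i$, so the seven spaces coincide and yield one space. For each $A$ with $2\le|A|\le7$ we get one space, and there are $2^7-1-7=120$ such subsets. Together with the monogenic space this gives $121$. Up to the permutation equivalence described in (ii), one class remains for each cardinality $\ell=1,\ldots,7$, hence seven inequivalent spaces.

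\textbf{Points (i) and (iii).} Point (i) is exactly Remark \ref{rem:FA}(c). Point (iii) is Theorem \ref{teo:slice_regularity} with $n=7$, where $(1-n)/2=-3$, on the axially symmetric set $\OO$.

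\textbf{Point (ii), first statement.} Let $A=A_\ell$ with $2\le\ell\le6$. Fix $J\in\s_{M_A}$ and set $M_J=\langle1,v_1,\ldots,v_{7-\ell},J\rangle$. This is a hypercomplex subspace, because $J$ anticommutes with $v_1,\ldots,v_{7-\ell}$ by orthogonality, and its Cauchy-Riemann operator is $\dibar_J=\partial_{x_0}+\sum_{i\le7-\ell}v_i\partial_{x_i}+J\partial_\beta$.

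\emph{Forward direction.} Let $f\in\F_A(\OO)$. From $\SA f=0$ and \eqref{eq:SA} we get, off $\{\x_A=0\}$,
\[
\DD_Af=-\x_A^{-1}\Eu_Af=J\partial_\beta f .
\]
Here we use $\x_A^{-1}=-\x_A/\|\x_A\|^2$, and no associativity is needed because only left multiplication by the single element $\x_A$ is involved. Therefore $D_Af=\dibar_Jf$ on $\OO\cap M_J$, so $f$ is $T$-regular. Proposition \ref{pro:SA} then furnishes the $T$-stem components $F^A_\emptyset,F^A_1$, which shows that $f$ is a $T$-function in the sense of \cite{GhiloniStoppatoJGP}.

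\emph{Converse.} A $T$-regular $T$-function $f=F_\emptyset+JF_1$ with the even/odd pair lies in $\ker\mathscr S_A$ by the converse part of Proposition \ref{pro:SA}. The same display then shows $D_Af=\dibar_Jf=0$. On the null set $\{\x_A=0\}$ we conclude by continuity.

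\textbf{Point (ii), second statement.} For a general $A$ with $|A|=\ell$, I take the linear isometry $\Phi$ of $\oo=\rr^8$ that fixes $1$ and permutes the basis elements, sending $\{v_i\}_{i\in A}$ onto $\{v_{8-\ell},\ldots,v_7\}$. The permuted family $\Phi^{-1}(\B)$ is again a hypercomplex basis of $\oo$, since any orthonormal basis of $\IM(\oo)$ together with $1$ is one. In the coordinates of that basis, $D_A$ becomes $D_{A_\ell}$, and $\mathscr S_A$ becomes $\mathscr S_{A_\ell}$. Composition with $\Phi$ is therefore the required isomorphism of real vector spaces.

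\textbf{Main obstacle.} The delicate point is that the $T$-regular theory of \cite{GhiloniStoppatoJGP} is formulated with respect to a fixed ordered hypercomplex basis. One has to check that the permuted basis still fits that framework; this is fine because \cite{GhiloniStoppato_arXiv24} treats arbitrary hypercomplex subspaces of alternative algebras, and \cite{Sabadini_Xu_Octo_TAMS} gives the octonionic version. One also has to make sure that no associativity sneaks into the computation of $\DD_Af$. There, \eqref{eq:anticommute} is exactly what replaces the Clifford relations.
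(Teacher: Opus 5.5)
Your treatment of (i), (iii) and the first half of (ii) is the paper's argument, since the paper's proof simply refers back to Propositions \ref{pro:Dunkl_H} and \ref{pro:FA_Clifford}. The ingredients are $\DD_Af=-\x_A^{-1}\Eu_Af=J\partial_\beta f$ from $\SA f=0$, the $T$-stem decomposition from Proposition \ref{pro:SA}, and the converse via its converse part. Your continuity argument across $\{\x_A=0\}$ is a welcome addition. For the number $121$ you should also invoke Theorem \ref{teo:classification1}, applied to the partitions $\{A\}\cup\{\{j\}: j\notin A\}$. Counting subsets alone does not show that different $A$ with $|A|\ge2$ give different spaces.

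The step that fails as written is the last sentence of (ii). Your relabelling computation shows that $D_A,\mathscr S_A$ with respect to $\B$ \emph{are} the operators $D_{A_\ell},\mathscr S_{A_\ell}$ with respect to the permuted basis $\B'=\Phi^{-1}(\B)$. In other words, $\F^{\B}_A(\OO)=\F^{\B'}_{A_\ell}(\OO)$. Composition with $\Phi$ does not carry this space onto the space $\F^{\B}_{A_\ell}$ defined with the original basis. The reason is that composition permutes the variables but not the imaginary units multiplying the derivatives.

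Here is a concrete counterexample. By Remark \ref{rem:FP}(d), $x_{\{1,2\}}=x_0+v_1x_1+v_2x_2\in\F_{\{1,2\}}(\oo)$. Let $\Phi$ be the involution exchanging $v_1\leftrightarrow v_6$ and $v_2\leftrightarrow v_7$. Then $g=x_{\{1,2\}}\circ\Phi=x_0+v_1x_6+v_2x_7$ satisfies $D_{\{6,7\}}g=1+(v_6v_1)(1+2k_6)+(v_7v_2)(1+2k_7)$. Its real part is $1$, so $g\notin\F_{\{6,7\}}(\oo)$.

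In $\rr_n$ and $\hh$ one repairs this by conjugating, $f\mapsto\Phi\circ f\circ\Phi^{-1}$, with $\Phi$ an algebra automorphism. This is why the paper uses $\B'=(1,k,i,j)$ and $\B''=(1,j,i,-k)$ in the quaternionic case. In $\oo$ such an automorphism need not exist: $\langle 1,v_1,v_2,v_3\rangle$ is a subalgebra, while $\langle1,v_5,v_6,v_7\rangle$ is not, since $v_5v_6=-v_3$.

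So the isomorphism claim in (ii) has to be read, as in Theorem \ref{teo:classification2}, as the equivalence $\F^{\B}_A(\OO)=\F^{\B'}_{A_\ell}(\OO)$. That equivalence is all your argument, and the paper's, actually proves. A genuine isomorphism onto $\F^{\B}_{A_\ell}$ would additionally need a real-linear map $\psi$ of $\oo$ intertwining, up to signs, left multiplication by the elements of $\B'$ with left multiplication by those of $\B$. That is a separate argument which you would have to supply.
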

\begin{proof}
The proof follows the same lines of the proofs of Propositions \ref{pro:Dunkl_H} and \ref{pro:FA_Clifford}. 
\end{proof}

\subsection{Slice Dunkl-regular functions}
\label{sub:Slice_Dunkl-regular_functions}

Example \ref{ex:axiallymonogenic} can be generalized to every function space $\F_A(\OO)$.  If $\OO\subseteq\Aa$ is axially symmetric, one can consider the space 
\[
\F_{A,[n]}(\OO)=\{f\in C^1(\OO,\Aa)\;|\; f\in\ker D_A\cap\ker \mathscr S_{[n]}\}.
\]
It holds $\F_{A,[n]}(\OO)=\F_A(\OO)\cap\SL(\OO)$, i.e., $\F_{A,[n]}(\OO)$ is the space of Dunkl-regular functions in $\F_A(\OO)$ that are also slice functions on $\OO$. The inclusion $\F_{A,[n]}(\OO)\supseteq\F_A(\OO)\cap\SL(\OO)$ is immediate from definitions, since slice functions are in the kernel of $\mathscr S_{[n]}$. The reverse inclusion follows from Proposition \ref{pro:kerS}: if $f\in\F_{A,[n]}(\OO)$, then $f\in\ker\mathscr S_A$ and therefore $f\in\F_A(\OO)$.   

For example, if $\Aa=\oo$, it can be shown
that the space $\F_{\{3,4,5,6,7\},[7]}(\OO)$ is the space of \emph{slice Fueter-regular} functions on $\OO$, a concept introduced and investigated in \cite{JinRenSabadini2020,SliceFueterRegular}. In particular, every slice Fueter-regular function is Dunkl monogenic, with multiplicities ${\bf k}=(0,0,k_3,\ldots,k_7)$ such that $\sum_{i=3}^7 k_i=-2$. 

If $\Aa=\hh$, the space $\F_{\{2,3\},[3]}(\OO)$ is the space of slice $(1,3)$-regular functions on $\OO$ (see Proposition \ref{pro:Dunkl_H}). For example, the $\rr$-linear function $f=f_1+f_2=2x_0+x_1i+x_2j+x_3k$, with $f_1,f_2$ as in Example \ref{ex:linear}, is a slice function in the space $\F_{\{2,3\}}(\hh)$.


\subsection{Dunkl-regular function spaces defined by partitions}
\label{sub:Dunkl-regular_function_spaces_defined_by_partitions}

Let $\B=(1,v_1,\ldots,v_n)$ be a hypercomplex basis of the hypercomplex subspace $M$ of $\Aa$, with associated coordinates $x_0,x_1,\ldots,x_n$. 
Given a partition  $\PP=\{A_1,\ldots,A_\ell\}$ of the set $[n]$, let ${\bf k}=(k_1,\ldots,k_n)$ be a sequence of non-positive Dunkl multiplicities such that, for every $j=1,\ldots,\ell$, it holds $2\sum_{i\in A_j}k_i=1-|A_j|$ and $k_i=0$ for at most one index $i\in A_j$. 
We shall call \emph{$\PP$-admissible} a sequence $\bf k$ of multiplicities satisfying these conditions. 

\begin{definition}\label{def:P-Dunkl-regular}
Let $\OO\subseteq M$ be open and $\zz_2^n$-invariant. Let $\PP=\{A_1,\ldots,A_\ell\}$ be a partition of $[n]$. Fix a sequence ${\bf k}=(k_1,\ldots,k_n)$ of $\PP$-admissible Dunkl multiplicities. We call \emph{$\PP$-Dunkl-regular 
functions} on $\OO$ the elements of the real vector space (a right $\Aa$-module if $\Aa$ is associative)
\[
\F_{\PP}(\OO):=\{f\in C^1(\OO,\Aa)\;|\; f\in\ker D_\PP\cap\ker \mathscr S_\PP\},
\] 
where $D_\PP$ is the $\zz_2^n$ Dunkl-Cauchy-Riemann operator 
$D_\PP=\partial_{x_0}+\sum_{j=1}^\ell\DD_{A_j}$ and $\mathscr S_\PP=(\mathscr S_{A_1},\ldots,\mathscr S_{A_\ell})$. We also say that a function is \emph{Dunkl-regular} if it is $\PP$-Dunkl-regular for some $\PP$. 
\end{definition}

We can rewrite the operator $D_\PP$ as
\begin{equation}\label{eq:DP}
  D_\PP=\dM+\sum_{i=1}^n \frac{k_iv_i}{x_i}(1-r_i),
\end{equation}

\begin{remark}
Two different partitions $\PP$, $\PP'$ can define the same Dunkl-Cauchy-Riemann operator: for example, if $n=4$, $\PP=\{\{1\},\{2,3,4\}\}$, $\PP'=\{\{1,2\},\{3,4\}\}$, the choice of multiplicities
\[
{\bf k}=(k_1,k_2,k_3,k_4)=(0,-1/2,-1/4,-1/4),
\]
is admissible for both $\PP$ and $\PP'$. Equation \eqref{eq:DP} yields $D_\PP=D_{\PP'}$. Observe however that $\mathscr S_\PP$ and $\mathscr S_{\PP'}$ are different: for example, the function $\x_{\{2,3,4\}}$ belongs to $\ker\mathscr S_\PP$ but $\x_{\{2,3,4\}}\not\in\ker\mathscr S_{\PP'}$ since $\underline S_{\{1,2\}}(\x_{\{2,3,4\}})\ne0$.  We shall prove in the forthcoming Theorem \ref{teo:classification1} that $\F_\PP(\OO)$ and $\F_{\PP'}(\OO)$ are always different if $\PP\ne\PP'$. 
\end{remark}

\begin{remark}\label{rem:FP}
(a)\quad Since $\F_{\PP}(\OO)\subseteq\ker D_\PP$, every Dunkl-regular function is, in particular, Dunkl monogenic on $\OO$ and then Dunkl harmonic.

(b)\quad
If $A\subseteq[n]$ is not empty, $A^c=\{j_1,\ldots,j_{n-\ell}\}$ and $\PP=\{A,\{j_1\},\ldots,\{j_{n-\ell}\}\}$, then $D_\PP=D_A$ and $\mathscr S_\PP=(\mathscr S_A,0,\ldots,0)$.
It follows that 
$\F_\PP(\OO)$ coincides with the space $\F_A(\OO)$ of $A$-Dunkl-regular functions of Definition \ref{def:A-Dunkl-regular}.

(c)\quad
In particular, it holds:
\begin{itemize}
  \item[(i)]
  $\F_{\{\{1\},\ldots,\{n\}\}}(\OO)=\M(\OO)$ on any open subset $\OO$ of $M$.
  \item[(ii)]
  $\F_{\{\{1,\ldots,n\}\}}(\OO)=\sr(\OO)$ on any axially symmetric open subset $\OO$ of $M$.
  \item[(iii)]
  On the Clifford algebra $\rr_n$, the function space of generalized partial-slice monogenic functions of type $(p,n-p)$ (see \cite{Sabadini_Xu_TAMS}) coincides with the space $\F_\PP(\OO)$, with $\PP=\{\{1\},\ldots,\{p\},\{p+1,\ldots,n\}\}$. This follows from Proposition \ref{pro:FA_Clifford} and point (b) above. 
  \item[(iv)]
  Again from Proposition \ref{pro:FA_Clifford} and (b), the space of $T$-regular functions on $\OO\subseteq\rr_n$, with $T=(t_0,t_1=n)$ (see \cite{GhiloniStoppatoJGP}), is the space $\F_\PP(\OO)$, with $\PP=\{\{1\},\ldots,\{t_0\},\{t_0+1,\ldots,n\}\}$. 
\end{itemize}

(d)\quad
Let $\PP=\{A_1,\ldots,A_\ell\}$. The inclusion $M_{A_i}\hookrightarrow M$ induces the vector space inclusion of $\sr(\OO\cap M_{A_i})$ into $\F_\PP(\OO)$. In particular, the linear function $x_{A_i}=x_0+\x_{A_i}=x_0+\sum_{j\in A_i}v_jx_j$ belongs to $\F_\PP(\OO)$ for every $i=1,\ldots,\ell$.
\end{remark}

Let $\PP=\{A_1,\ldots,A_\ell\}$ be a partition of the set $[n]$. Then $\PP$ defines a partition of the number $n$ as $n=|A_1|+\cdots+|A_\ell|$. 
The \emph{number of partitions} of the set $[n]=\{1,\ldots,n\}$ is the \emph{Bell number} $B_n$, while the number of partitions of the integer $n$ is the \emph{partition number} $p(n)$.

\begin{theorem}\label{teo:classification1}
Let $n=\dim M-1$ and let $\OO\subseteq M$ be open and $\zz_2^n$-invariant. 
The function space $\F_\PP(\OO)$ does not depend on the Dunkl multiplicities of the operators $D_\PP$ and $\mathscr S_\PP$, provided that they are $\PP$-admissible. Given two different partitions $\PP$ and $\PP'$ of the set $[n]$, the function spaces $\F_\PP(\OO)$ and $\F_{\PP'}(\OO)$ are distinct. 
\end{theorem}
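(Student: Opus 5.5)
The plan has two parts: independence from the multiplicities, and distinctness of the spaces for different partitions.

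\emph{Independence from the multiplicities.} I will show that membership in $\F_\PP(\OO)$ can be described without reference to $\mathbf k$. Let $f\in\ker\mathscr S_\PP$ and fix a block $A=A_j$. Then $\SA f=0$, which by \eqref{eq:SA} reads $\x_A\DD_A f=-\Eu_A f$. Hence, where $\x_A\neq0$,
\[
\DD_A f=-\x_A^{-1}\Eu_A f,
\]
and the right-hand side involves no multiplicity. This is the computation already used in \eqref{eq:DD} and \eqref{eq:DD_Clifford}. Consequently $D_\PP f=\partial_{x_0}f-\sum_j\x_{A_j}^{-1}\Eu_{A_j}f$ on the dense set where every $\x_{A_j}\ne0$, and by continuity the condition $D_\PP f=0$ is independent of $\mathbf k$.

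It remains to see that $\ker\mathscr S_A$ itself does not depend on the admissible $k_i$, $i\in A$. The converse part of Proposition \ref{pro:SA} gives one inclusion: every $f$ of the form $F_\emptyset+JF_1$ lies in $\ker\mathscr S_A$ for any admissible choice. For the direct part, the proof of Proposition \ref{pro:SA} only uses the Perron--Frobenius Lemma \ref{PerronFrobeniusA}, which holds for every admissible $\mathbf k$. So $\ker\mathscr S_A$ equals the set of functions whose $A$-spherical value and $(\x_A f)^\circ_{s,A}$ are $r_i$-invariant for $i\in A$ and lie in $\ker\Gamma_{M_A}$. Both conditions are multiplicity-free. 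This needs a small check without assuming $A$-circularity: I will rephrase the conclusion as ``$g$ is $r_A$-compatible and $\Gamma_{M_A}g=0$'', which is exactly what the argument yields and what the converse uses.

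\emph{Distinctness for $\PP\ne\PP'$.} I will use linear test functions. Since $\PP\neq\PP'$, there exist indices $a,b$ in a common block $A\in\PP$ that lie in different blocks of $\PP'$, or the reverse situation; by symmetry assume the former. By Remark \ref{rem:FP}(d), $x_A=x_0+\x_A\in\F_\PP(\OO)$. I then claim $x_A\notin\F_{\PP'}(\OO)$.

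Write $D_{\PP'}x_A=1+\sum_{i\in A}v_i T_{\B,i}(x_i v_i)=1-\sum_{i\in A}(1+2k'_i)$, since $T_{\B,i}x_i=1+2k'_i$. Alternatively, check $\mathscr S_{B'}x_A$ for the block $B'\in\PP'$ containing $a$. The component $\x_{B'\cap A}$ is a proper piece of $\x_A$ inside $B'$, and $\underline S_{B'}$ applied to $\x_{B'\cap A}$ gives a nonzero multiple of $\x_{B'\cap A}$ unless $B'\cap A=B'$. I expect the cleanest route is the direct computation, for $C\subsetneq B'$,
\[
\underline S_{B'}\x_C=\x_{B'}\DD_{B'}\x_C+\x_C .
\]
Here $\DD_{B'}\x_C=-\sum_{i\in C}(1+2k'_i)$ is real, so $\underline S_{B'}\x_C=\x_C-\x_{B'}\sum_{i\in C}(1+2k'_i)$. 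This vanishes only if $\x_C$ is proportional to $\x_{B'}$, which is impossible when $C\subsetneq B'$, $C\neq\emptyset$. Meanwhile $\underline S_{B'}$ kills the parts of $x_A$ outside $B'$, by the commutation relations of Proposition \ref{pro:Commutation relations}.

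The main obstacle is choosing the right direction. If every block of $\PP$ is a union of blocks of $\PP'$ (so $\PP'$ refines $\PP$), the pair $a,b$ must instead be taken in a block of $\PP'$ split by $\PP$. Then I test $x_{A'}$ for $A'\in\PP'$ against $\mathscr S_\PP$, and the same computation applies with the roles of $\PP$ and $\PP'$ exchanged. Since $\PP\ne\PP'$, at least one of the two directions produces a block intersecting another block properly, so one of the two test functions separates the spaces.
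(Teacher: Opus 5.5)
Your first part, independence from $\mathbf k$, is essentially the paper's argument. On $\ker\mathscr S_{A_j}$ one has $\underline D_{A_j}f=-\underline x_{A_j}^{-1}\mathbb E_{A_j}f$. Moreover $\ker\mathscr S_A$ does not depend on $\mathbf k$, because Lemma~\ref{PerronFrobeniusA} holds for every admissible $\mathbf k$ and $\underline S_A g=-\Gamma_{M_A}g$ for $r_i$-invariant $g$. Stating the kernel condition as ``$f^\circ_{s,A}$ and $(\underline x_Af)^\circ_{s,A}$ are $r_i$-invariant and lie in $\ker\Gamma_{M_A}$'' is a good way to avoid the $A$-circularity that Proposition~\ref{pro:SA} assumes but the theorem does not.

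The distinctness part has a genuine gap in the choice of test function. Take $a,b\in A\in\mathcal P$ separated by $\mathcal P'$, with $B'\ni a$. Your computation gives $\underline S_{B'}x_A\neq0$ only when $A\cap B'\subsetneq B'$. Nothing rules out $B'\subseteq A$, for instance $B'=\{a\}$, where $\mathscr S_{\{a\}}=0$; in that case $\mathscr S_{B'}x_A=0$.

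Your patch does not repair this. If $\mathcal P'$ refines $\mathcal P$, there is no block of $\mathcal P'$ split by $\mathcal P$. The bad case also occurs without any refinement. Take $n=4$, $\mathcal P=\{\{1,2\},\{3\},\{4\}\}$, $\mathcal P'=\{\{1\},\{2\},\{3,4\}\}$: both $\mathscr S_{\mathcal P'}x_{\{1,2\}}=0$ and $\mathscr S_{\mathcal P}x_{\{3,4\}}=0$, so neither of your two $\mathscr S$-tests separates the spaces.

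The formula $D_{\mathcal P'}x_A=1-\sum_{i\in A}(1+2k'_i)$ is left unevaluated, and it is not always nonzero. For $\mathcal P=\{\{1,2\},\{3\}\}$, $\mathcal P'=\{\{1\},\{2,3\}\}$ and the admissible choice $\mathbf k'=(0,-\tfrac12,0)$, it vanishes.

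Your route can be completed by a dichotomy:
\begin{itemize}
\item if some $B'\in\mathcal P'$ meets $A$ without being contained in it, then $\underline S_{B'}x_A\neq0$;
\item otherwise $A$ is a union of $r\ge2$ blocks of $\mathcal P'$, and admissibility gives $D_{\mathcal P'}x_A=1-r\neq0$.
\end{itemize}

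The paper's argument is simpler because it reverses the roles. In your setting $b\notin B'$, so $A\cap B'$ is a nonempty proper subset of $A$. Then $x_{B'}\in\mathcal F_{\mathcal P'}(\Omega)$ by Remark~\ref{rem:FP}(d), while $\underline S_A x_{B'}=\underline x_{A\cap B'}-\underline x_A\sum_{h\in A\cap B'}(1+2k_h)\neq0$. Hence $x_{B'}\notin\mathcal F_{\mathcal P}(\Omega)$, with no case split and no use of $D$. The paper phrases it as follows: pick $A_i\notin\mathcal P'$ and $B_j$ meeting it. Test $x_{B_j}$ against $\underline S_{A_i}$ if $A_i\not\subseteq B_j$, and $x_{A_i}$ against $\underline S_{B_j}$ otherwise.
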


\begin{proof}
Let $\PP=\{A_1,\ldots,A_\ell\}$ and let $D_\PP$, $\mathscr S_\PP$ and $D'_{\PP}$, $\mathscr S'_{\PP}$ operators with $\PP$-admissible Dunkl multiplicities $k_i$ and $k'_i$ respectively. The characterization of $\ker\mathscr S_A$ given by Proposition \ref{pro:SA} does not depend on the multiplicities and then $f\in\ker\mathscr S_\PP$ if and only if $f\in\ker\mathscr S'_\PP$. 
If $f\in\ker D_\PP\cap\ker\mathscr S_\PP$, then $f\in\ker \underline S_{A_i}$ for every $i$ and, from \eqref{eq:SA}, $\DD_{A_i}f=-\x_{A_i}^{-1}\Eu_{A_i}f$. Therefore also $D_\PP f=(\partial_{x_0}-\sum_{i=1}^\ell\x_{A_i}^{-1}\Eu_{A_i})f$ does not depend on the $k_i$'s, and $f\in\ker D'_\PP\cap\ker\mathscr S'_\PP$. 

Let $\PP=\{A_1,\ldots,A_\ell\}$, $\PP'=\{B_1,\ldots,B_m\}$ different partitions of the set $[n]$ and assume that $A_i\not\in\PP'$. Let $j$ be such that $A_i\cap B_j\ne\emptyset$. If $A_i\setminus B_j\ne\emptyset$, then $(x_{B_j})_{|M_{A_i}}=x_{A_i\cap B_j}$ is not a slice function on $M_{A_i}$, since
\[\textstyle
\underline S_{A_i}(x_{B_j})=\x_{A_i}\DD_{A_i}(x_{B_j})+\Eu_{A_i}(x_{B_j})=\x_{A_i}\sum_{h\in A_i\cap B_j}(-1-2k_h)+\x_{A_i\cap B_j}
\ne0.
\]
Then $x_{B_j}\in\F_{\PP'}(\OO)\setminus\F_\PP(\OO)$. If instead $B_j\setminus A_i\ne\emptyset$, then $x_{A_i}\in\F_{\PP}(\OO)\setminus\F_{\PP'}(\OO)$. 
\end{proof}

\begin{corollary}\label{cor:Kreg}
Let $\PP=\{A_1,\ldots,A_\ell\}$. The Dunkl multiplicities $k_i$ of the operators $D_\PP$, $\mathscr S_\PP$ defining the space $\F_\PP(\OO)$ can be taken as $k_i=-\frac12+\frac1{2|A_j|}\in(-\frac12,0]$ for every $i\in A_j$ and $j=1,\ldots,\ell$. \hfill\qed
\end{corollary}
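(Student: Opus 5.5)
The corollary follows from Theorem \ref{teo:classification1}, which says that $\F_\PP(\OO)$ is the same for every $\PP$-admissible sequence $\bf k$. So the plan is to check that the proposed choice is $\PP$-admissible, and then invoke that theorem.

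Fix a block $A_j$ with cardinality $m=|A_j|$, and set $k_i=-\frac12+\frac1{2m}$ for all $i\in A_j$. The admissibility conditions are checked in turn.

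First, the sum condition holds:
\[
2\sum_{i\in A_j}k_i=2m\left(-\tfrac12+\tfrac1{2m}\right)=-m+1=1-|A_j|.
\]
Second, every $k_i$ is non-positive, since $\frac1{2m}\le\frac12$ for $m\ge1$. This also places $k_i$ in $(-\frac12,0]$, because $\frac1{2m}>0$.

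Third, at most one index in $A_j$ may have $k_i=0$. Here $k_i=0$ happens exactly when $m=1$. In that case $A_j$ is a singleton, so it contains only one index, and the condition holds. When $m\ge2$, every $k_i$ is strictly negative, so no index has $k_i=0$.

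Hence $\bf k$ is $\PP$-admissible. By Theorem \ref{teo:classification1}, the space $\F_\PP(\OO)$ defined with these multiplicities coincides with the one defined by any other admissible choice.

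There is no real obstacle here. The only point needing a remark is the singleton blocks, where the forced value $k_i=0$ agrees with Remark \ref{rem:FA}(c).
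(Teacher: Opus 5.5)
Your proposal is correct and matches the intended argument: the paper states this corollary without proof as an immediate consequence of Theorem \ref{teo:classification1}, and the only work is checking that $k_i=-\frac12+\frac1{2|A_j|}$ is $\PP$-admissible. You verify the sum condition, non-positivity, and the at-most-one-zero condition (including the singleton case) correctly.
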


\begin{remark}\label{rem:Kreg}
In view of the Corollary, the Dunkl multiplicities of the operators $D_\PP$, $\mathscr S_\PP$ in $\F_\PP(\OO)$, although non-positive, can be chosen in the \emph{regular set} $K^{reg}$ (see e.g.\ \cite[\S2.4]{Rosler}) for the reflection group $\zz_2^n$.  
\end{remark}

In general, the function space $\F_\PP(\OO)$ depends on the choice of the hypercomplex basis $\B$ of $M$. If we want to highlight this dependence, we will write $\F_\PP^\B(\OO)$ instead of $\F_\PP(\OO)$.

Let $\sigma$ be a permutation of the set $[n]$. Given a partition $\PP=\{A_1,\ldots,A_\ell\}$ of $[n]$, let $\PP_\sigma=\{B_1,\ldots,B_\ell\}$ be the partition defined by $B_i=\sigma^{-1}(A_i)$, $i=1,\ldots,\ell$. 
Observe that a sequence ${\bf k}=(k_1,\ldots,k_n)$ of Dunkl multiplicities is $\PP$-admissible if and only if the sequence ${\bf k}_\sigma=(k_{\sigma(1)},\ldots,k_{\sigma(n)})$ is $\PP_\sigma$-admissible. 

Let $\B_\sigma=(1,v'_1,\ldots,v'_{n})$ be the basis obtained by $\B$ permuting its elements: $v'_i=v_{\sigma(i)}$ for every $i=1\ldots,n$. 

We shall call \emph{equivalent} two function spaces $\F^\B_\PP(\OO)$ and $\F^{\B'}_{\PP'}(\OO)$  if $\B'=\B_\sigma$ and $\PP'=\PP_\sigma$ for some permutation $\sigma$.

\begin{theorem}\label{teo:classification2}
Let $n=\dim M-1$ and let $\OO\subseteq M$ be open and $\zz_2^n$-invariant. Let $\B$ be a fixed hypercomplex basis.
If the partitions $\PP$ and $\PP'$ of the set $[n]$ define the same partition of the number $n$, then there exists a hypercomplex basis $\B'$ of $M$ such that the spaces $\F_\PP^\B(\OO)$ and $\F_{\PP'}^{\B'}(\OO)$ are equivalent. 
Such function spaces are isomorphic as real vector spaces.  
\end{theorem}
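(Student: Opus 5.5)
Since $\PP=\{A_1,\ldots,A_\ell\}$ and $\PP'=\{A'_1,\ldots,A'_\ell\}$ induce the same partition of the number $n$, I will first reorder the blocks of $\PP'$ so that $|A'_j|=|A_j|$ for every $j$. Then I choose a permutation $\sigma$ of $[n]$ with $\sigma^{-1}(A_j)=A'_j$ for every $j$; such a $\sigma$ exists because corresponding blocks have equal cardinality, so $\sigma$ can be assembled from bijections $A'_j\to A_j$. With this choice $\PP'=\PP_\sigma$. I set $\B':=\B_\sigma=(1,v_{\sigma(1)},\ldots,v_{\sigma(n)})$. This is again a hypercomplex basis of $M$, being a reordering of an orthonormal family of anticommuting imaginary units. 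By the definition of equivalence given just before the theorem, $\F^\B_\PP(\OO)$ and $\F^{\B'}_{\PP'}(\OO)$ are then equivalent. The $\zz_2^n$-invariance of $\OO$ is unaffected, since the reflection group for $\B'$ is the same set of reflections, only relabelled.

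For the isomorphism I will show that the two spaces actually coincide as sets of functions, which is stronger. The coordinates of $x$ with respect to $\B'$ are $y_i=x_{\sigma(i)}$, and the reflection $r'_i$ for $\B'$ is $r_{\sigma(i)}$. Take the multiplicities ${\bf k}_\sigma$ for $\B'$; they are $\PP_\sigma$-admissible exactly when ${\bf k}$ is $\PP$-admissible. Then the Dunkl operator $T_{\B',i}$ with multiplicity $k_{\sigma(i)}$ equals $T_{\B,\sigma(i)}$. Consequently
\[
\textstyle\DD^{\B'}_{A'_j}=\sum_{i\in\sigma^{-1}(A_j)}v_{\sigma(i)}T_{\B,\sigma(i)}=\DD^{\B}_{A_j}.
\]
In the same way $\x'_{A'_j}=\x_{A_j}$, $\Eu'_{A'_j}=\Eu_{A_j}$ and $r'_{A'_j}=r_{A_j}$. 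Hence $\underline S$, $\widetilde{\underline S}$, $\underline S'$ and $\underline S''$ all agree blockwise, so $\mathscr S^{\B'}_{A'_j}=\mathscr S^\B_{A_j}$ and $D^{\B'}_{\PP'}=D^\B_{\PP}$. Theorem \ref{teo:classification1} makes $\F_\PP$ independent of the admissible multiplicities, which justifies using ${\bf k}_\sigma$.

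It follows that $\F^{\B'}_{\PP'}(\OO)=\F^\B_\PP(\OO)$ as subspaces of $C^1(\OO,\Aa)$. If one prefers to stay with the fixed basis $\B$, the real-linear coordinate permutation $\Phi(x)=L(x_{\sigma^{-1}(1)},\ldots)$ gives an explicit isomorphism $f\mapsto f\circ\Phi$ between $\F^\B_{\PP}$ and $\F^{\B}_{\PP'}$ read in the permuted coordinates. The only point needing care is this bookkeeping with $\sigma$ versus $\sigma^{-1}$, i.e.\ checking that $B_i=\sigma^{-1}(A_i)$ matches the relabelling $v'_i=v_{\sigma(i)}$. Once that is set up, every operator identity above is a direct rewriting of sums over the blocks.
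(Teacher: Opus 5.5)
Your proof is correct and is essentially the paper's argument: choose $\sigma$ with $\PP'=\PP_\sigma$, take $\B'=\B_\sigma$ with multiplicities ${\bf k}_\sigma$, and note that $D_\PP,\mathscr S_\PP$ for $\B$ coincide with $D_{\PP_\sigma},\mathscr S_{\PP_\sigma}$ for $\B_\sigma$. Since $L\circ R_{\widehat P}=L'$ on $\rr^{n+1}$, the paper's map $f\mapsto f_\sigma$ is in fact the identity, so your observation that the two spaces coincide is exactly right, and your condition $\sigma^{-1}(A_j)=A'_j$ follows the definition of $\PP_\sigma$ more carefully than the paper's $B_i=\sigma(A_i)$.

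Drop the closing aside, however. With $\B$ kept fixed, $f\mapsto f\circ\Phi$ does not intertwine the operators, because the units $v_i$ in front of the derivatives are not permuted: already on $\hh$, $x_1-ix_0\in\ker\dibar_\hh$ while $x_2-ix_0\notin\ker\dibar_\hh$.
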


\begin{proof}

If $\PP=\{A_1,\ldots,A_\ell\}$ and $\PP'=\{B_1,\ldots,B_m\}$ define the same partition of $n$, then $\ell=m$ and, up to reordering, we can assume that $|A_i|=|B_i|$ for every $i=1,\ldots,\ell$. Let $\sigma$ be a permutation of $[n]$ such that $B_i=\sigma(A_i)$ for every $i$. Then $\PP'=\PP_\sigma$. 

Let $\B'=\B_\sigma=\{1,v_1',\ldots,v_n'\}$, with coordinates $y_0,\ldots,y_n$. 
Let $L:\R^d \to \Aa$ be the real vector isomorphism defined by a basis $\B_{\Aa}$ of $\Aa$ that completes $\B=\{1,v_1,\ldots,v_n\}$, and $P\in O(n)$ the permutation matrix 
such that $v_j=\sum_{i=1}^np_{ij}v'_i$ for every $j=1,\ldots,n$. If $L'$ is the isomorphism obtained from $\B'$, then it holds $(x_0,\ldots,x_{n})=(y_0,\ldots,y_{n})\widehat P$, where $\widehat P$ is the permutation matrix
\[\widehat P=\begin{bmatrix}
1&0\\
0&P
\end{bmatrix}\in O(n+1).
\]
The operators $D_\PP$ and $\mathscr S_\PP$  w.r.t.\ the basis $\B$ 
with multiplicities sequence ${\bf k}$ coincide, respectively, with the operators $D_{\PP_\sigma}$ and $\mathscr S_{\PP_\sigma}$ w.r.t.\ the basis $\B'=\B_\sigma$ 
with multiplicities sequence ${\bf k}_\sigma$. 
Therefore a function $f$ belongs to the space $\F^\B_{\PP}(\OO)$ if and only if the function $f_\sigma$ defined by the relation
\[
f_\sigma\circ L'_{|U'} = f\circ L_{|U}\circ R_{\widehat P}
\]
belongs to $\F_{\PP'}^{\B'}(\OO)$. Here $U=(L_{|\rr^{n+1}})^{-1}(\OO)$ and $U'=(L'_{|\rr^{n+1}})^{-1}(\OO)$ are subsets of $\rr^{n+1}$ and $R_{\widehat P}$ denote the right multiplication by $\widehat P$. The correspondence $\F_\PP^\B(\OO)\ni f \leftrightarrow f_\sigma\in\F_{\PP'}^{\B'}(\OO)$ defines the isomorphism between the two real vector spaces.
\end{proof}

\begin{corollary}\label{cor:equivalence}
Two spaces $\F_\PP^\B(\OO)$ and $\F_{\PP'}^{\B'}(\OO)$ of Dunkl-regular functions are equivalent if and only if $\PP$ and $\PP'$ define the same partition of $n=\dim M-1$. Therefore the number of non-equivalent functions spaces $\F_\PP(\OO)$, $\OO\subseteq M$, is equal to the partition number $p(n)$. 
In particular, there are $2^n-n$ distinct function spaces of the form $\F_A(\OO)\ne\{0\}$, with $A$ a non-empty subset of $[n]$ (see Definition \ref{def:A-Dunkl-regular}). Exactly $n$ of these spaces are non-equivalent, one for each cardinality $1\le|A|\le n$.   
\end{corollary}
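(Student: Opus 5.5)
The corollary has two directions for the equivalence claim, and then a counting statement.

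For the ``if'' direction I would simply invoke Theorem \ref{teo:classification2}. If $\PP$ and $\PP'$ induce the same partition of $n$, there is a permutation $\sigma$ with $\PP'=\PP_\sigma$. Then $\F^\B_\PP(\OO)$ is equivalent to $\F^{\B_\sigma}_{\PP_\sigma}(\OO)$ by definition, and the theorem identifies the two spaces via $f\mapsto f_\sigma$.

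For the ``only if'' direction, equivalence means by definition that $\B'=\B_\sigma$ and $\PP'=\PP_\sigma$ for some permutation $\sigma$. Since $B_i=\sigma^{-1}(A_i)$ and $\sigma$ is a bijection, we get $|B_i|=|A_i|$ for every $i$. Hence the multisets of block sizes coincide, that is, $\PP$ and $\PP'$ define the same partition of $n$.

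The count of non-equivalent spaces follows from two facts. Every integer partition of $n$ is realized by some set partition of $[n]$, obtained by taking consecutive blocks. By the equivalence just proved, equivalence classes of spaces $\F_\PP$ are in bijection with integer partitions of $n$, so there are $p(n)$ of them. Here I would use Theorem \ref{teo:classification1} to ensure that distinct set partitions give genuinely distinct spaces, so the classes are well defined and not collapsed further by accidental equalities.

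For the last claim, recall from Remark \ref{rem:FP}(b) that $\F_A(\OO)=\F_{\PP_A}(\OO)$, where $\PP_A=\{A,\{j_1\},\ldots\}$ consists of $A$ together with the singletons of $A^c$. The map $A\mapsto\PP_A$ is injective except on singletons, since every $\{i\}$ gives the discrete partition. So the non-empty subsets give $2^n-1$ subsets, with the $n$ singletons collapsing to one partition, yielding $2^n-n$ partitions. By Theorem \ref{teo:classification1} these produce $2^n-n$ distinct spaces, each nonzero because it contains $x_A$, as in Remark \ref{rem:FP}(d). The partition $\PP_A$ corresponds to the integer partition $|A|+1+\cdots+1$, and these are pairwise distinct for $|A|=1,\ldots,n$. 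The equivalence criterion then gives exactly $n$ non-equivalent classes.

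The main subtlety is the ``only if'' direction together with the well-definedness of the count. One must make sure that equivalence is tied to the basis permutation exactly as defined, and that no extra coincidences occur. Theorem \ref{teo:classification1} rules out such coincidences for a fixed basis.
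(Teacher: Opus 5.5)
Your proposal is correct and matches how the paper obtains this corollary. The paper states it without a separate proof, as a direct consequence of:
\begin{itemize}
\item the definition of equivalence, which gives the ``only if'' part;
\item Theorem \ref{teo:classification2}, which gives the ``if'' part (with $\B'=\B_\sigma$, the reading you adopt) and, together with surjectivity onto integer partitions, the count $p(n)$;
\item Theorem \ref{teo:classification1}, which gives distinctness for a fixed basis;
\item Remark \ref{rem:FP}(b), whose identification $\F_A=\F_{\PP_A}$ yields the $2^n-n$ count and the $n$ inequivalent classes.
\end{itemize}
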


Observe that the spaces $\M(\OO)$ and $\sr(\OO)$, associated to the partitions $\{\{1\},\ldots,\{n\}\}$  and $\{\{1,\ldots,n\}\}$  respectively, are the unique functions spaces $\F_\PP^\B(\OO)$ that are not equivalent to any other space $\F_{\PP'}^{\B'}(\OO)$. 

\begin{table}[h]
\begin{center}
\begin{tabular}{c|c|c|c|c|c|c|c|c|c|c|c}
\hline &&&&&&&&&&\\[-10pt]
$n=\dim(M)-1$ &$1$&$2$&$3$&$3$&$3$&$4$&$5$&$6$&7&$8$&\dots \\
\hline &&&&&&&&&&\\[-10pt]
$p(n)$ & $1$ & 2&3&3&3&5&7&11&15&22&\dots\\
\hline &&&&&&&&&&\\[-10pt]
$B_n$ & $1$ & $2$ &$5$&$5$&$5$&$15$&$52$&$203$&$877$&4140 &\dots\\
\hline &&&&&&&&&&\\[-10pt]
$2^n-n$ &$1$&$2$&$5$&$5$&$5$&$12$& 27& 58& 121& 248&\dots \\
\hline &&&&&&&&&&\\[-10pt]
$M$&$\cc$ &$\hh_r$ & $\hh$ &$\hh$&$\rr^4$&$\rr^5$ & $\rr^6$&$\rr^7$ & $\oo$&$\rr^9$&\dots\\
\hline &&&&&&&&&&\\[-10pt]
$\Aa$&$\cc$ &$\hh$ & $\hh$ &$\oo$&$\rr_3$&$\rr_4$ & $\rr_5$&$\rr_6$ & $\oo$&$\rr_8$&\dots\\
\hline
\end{tabular}
\vskip5pt
\caption{Partition numbers and Bell numbers for some hypercomplex subspaces $M$}
\label{table}
\end{center}
\end{table}


\subsection{Dunkl-regularity and $T$-regular functions}
\label{sub:Dunkl-regularity_and_regular_$T$-functions}

Remark \ref{rem:FP}(iv) can be generalized to include in the theory of Dunkl-regular functions all the classes of $T$-regular functions on a hypercomplex subspace $M$ of $\Aa$ investigated 
in \cite{GhiloniStoppatoJGP,GhiloniStoppato_arXiv24}. We refer to that literature for definitions.

\begin{definition}
Let $\PP=\{A_1,\ldots,A_\ell\}$. A subset $\OO$ of $M$ is called $\PP$-circular if it is $A_j$-circular for every $j=1,\ldots,\ell$. Equivalently, $\OO$ is $\PP$-circular if and only if there exists $D\subseteq\rr^{\ell+1}$ such that $\OO=\OO_D$, where 
\[\textstyle
\OO_D:=\left\{x_0+\sum_{i=1}^\ell\beta_i J_i\;|\; (x_0,\beta_1,\ldots,\beta_\ell)\in D, J_i\in \s_{M_{A_i}}\text{\ for }i=1,\ldots\ell\right\}.
\]
\end{definition}



\begin{proposition}\label{pro:Tregular}
Given a $T$-fan, namely a sequence $T=(t_0,\ldots,t_\tau)$ of integers with $\tau>0$ and $0\le t_0<t_1<\cdots <t_\tau=n$, let $\PP$ be the partition of $[n]$ defined as:
\begin{align*}
\PP&=\{\{1,\ldots,t_1\},\{t_1+1,\ldots,t_2\},\ldots,\{t_{\tau-1}+1,\ldots,t_\tau\}\}\text{\quad if $t_0=0$, and}\\
\PP&=\{\{t_0+1,\ldots,t_1\},\{t_1+1,\ldots,t_2\},\ldots,\{t_{\tau-1}+1,\ldots,t_\tau\},\{1\},\ldots,\{t_0\}\}\text{\quad if $t_0>0$.}
\end{align*}
Let $\OO$ be a $\PP$-circular open subset of $M$. Then $\OO$ is a $T$-symmetric set \cite[Definition 4.20]{GhiloniStoppato_arXiv24} and $\F_\PP(\OO)$  is the set of $T$-regular functions of class $C^1$ on $\OO$.
\end{proposition}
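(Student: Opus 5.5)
The plan is to reduce everything to the one-block case already treated in Proposition \ref{pro:FA_Clifford} and Proposition \ref{pro:Dunkl_H}, applying it blockwise. The key point is that the operators attached to different blocks commute or anticommute by Proposition \ref{pro:Commutation relations}, so the blocks do not interfere.

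I would begin with the set-theoretic claim. A $\PP$-circular set $\OO=\OO_D$ is described by $(x_0,\beta_1,\ldots,\beta_\ell)$ with $J_i\in\s_{M_{A_i}}$. Here the singleton blocks $\{1\},\ldots,\{t_0\}$ contribute $\s_{M_{\{i\}}}=\{\pm v_i\}$, so their $\beta_i J_i$ simply range over $\rr v_i$. This is exactly the description of $T$-symmetric sets in \cite[Definition 4.20]{GhiloniStoppato_arXiv24}, with the $T$-fan blocks $\{t_{h-1}+1,\ldots,t_h\}$, and it requires only a matching of notation.

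Next comes the characterisation of $\ker\mathscr S_\PP$. I would apply Proposition \ref{pro:SA} successively to each block $A_j$ with $|A_j|\ge2$; the singleton blocks impose nothing, since $\mathscr S_{\{i\}}=0$. Block $A_1$ gives $f=F_\emptyset^{A_1}+J_1F_1^{A_1}$. The components $F^{A_1}_\emptyset$ and $F^{A_1}_1$ are expressed through $f^\circ_{s,A_1}$ and $(\x_{A_1}f)^\circ_{s,A_1}$. Since $[\mathscr S_{A_2},r_{A_1}]=0$ and $[\SA,\x_B]=0$, $\{\SA'',\x_B\}=0$, these components again lie in $\ker\mathscr S_{A_2}$. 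Iterating, $f$ takes the form $f=\sum_{K\subseteq[\tau]}J_K F_K$, with the ordered products $J_K=J_{k_1}(J_{k_2}(\cdots))$ and components $F_K$ of the appropriate parities in each $\beta_i$. This is precisely a $T$-function induced by a $T$-stem function in the sense of \cite{GhiloniStoppato_arXiv24}. The converse direction, that such $f$ lie in $\ker\mathscr S_\PP$, would follow from the converse part of Proposition \ref{pro:SA} together with the same commutation relations.

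Then I would handle the operator. On $\ker\mathscr S_\PP$, \eqref{eq:SA} gives $\DD_{A_j}f=-\x_{A_j}^{-1}\Eu_{A_j}f=J_j\partial_{\beta_j}f$ off $\{\x_{A_j}=0\}$, exactly as in \eqref{eq:DD_Clifford}. Hence $D_\PP f$ restricted to the slice $M_{J_1,\ldots,J_\tau}=\langle1,v_1,\ldots,v_{t_0},J_1,\ldots,J_\tau\rangle$ equals $\dibar_{J}f$, the Cauchy-Riemann operator of that hypercomplex subspace. $T$-regularity means monogenicity on each such slice, so the identity extends to all of $\OO$ by continuity. This gives $\F_\PP(\OO)=$ the $C^1$ $T$-regular functions.

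The main obstacle is the iteration step in the nonassociative setting. There one must check that the nested products $J_{k_1}(J_{k_2}\cdots)$ obtained from the successive decompositions match the ordering convention of $T$-functions in \cite{GhiloniStoppato_arXiv24}. One must also verify that the $r_{A_j}$-parities of the stem components are preserved, via \eqref{eq:anticommute} and the relation $\{\DD_A,\x_B\}=0$. I would first carry this out for $\tau=2$ and then induct on $\tau$.
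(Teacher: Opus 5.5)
Your proposal is correct and follows essentially the same route as the paper. Your blockwise iteration of Proposition \ref{pro:SA}, using the disjoint-block commutation relations, is exactly the content of Theorem \ref{teo:kerSP}, which the paper simply cites (stopping the iteration at step $m=\tau-1$, since $\mathscr S_{\{j\}}=0$ on singleton blocks). Your identification $D_\PP f=\dibar_J f$ via \eqref{eq:SA} is the same operator step the paper uses in both directions. It relies on $\DD_{\{j\}}=v_j\partial_{x_j}$, which holds because admissibility forces $k_j=0$ on singletons; you use this implicitly and could state it.
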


Before proving the proposition, we investigate the kernel of the operator $\mathscr S_\PP$. We start by considering the intermediate case when $f$ belongs to the kernel of two operators $\mathscr S_{A_1}$, $\mathscr S_{A_2}$, with $A_1$, $A_2$ not intersecting subsets of $[n]$. 
In the following, if $K=\{k_1,\ldots,k_p\}\subseteq[n]$, we write indifferently $F_K$ or $F_{k_1\cdots k_p}$ to denote a function $F_K$ associated to $K$. We recall the notation
$\bar \beta^h=(\beta_1,\ldots,-\beta_h\ldots,\beta_\ell)$ to denote the reflection of $\beta\in\rr^\ell$ w.r.t.\ the $h$-th variable.

\begin{proposition}\label{pro:kerS2}
Let $A_1,A_2$ be non-empty subsets of $[n]$, with $A_1\cap A_2=\emptyset$. Let $(A_1\cup A_2)^c=\{j_1,\ldots,j_{n-\ell}\}$ with $j_1<\cdots <j_{n-\ell}$. 
Let $\OO$ be an open subset of $M$ which is $A_1$-circular and $A_2$-circular.   
If $f\in\ker\mathscr S_{A_1}\cap \ker\mathscr S_{A_2}$,  then there exist continuous functions 
$F_\emptyset,F_{1},F_{2},F_{12}:E'\times E''\subseteq\rr^3\times\rr^{n-\ell}\to\Aa$ such that for every $x\in\OO$ it holds
\[
f(x)=F_{\emptyset}(x_0,\beta,y)+J_1F_{1}(x_0,\beta,y)+J_2F_{2}(x_0,\beta,y)+J_1(J_2F_{12}(x_0,\beta,y)). 
\]
Here $(x_0,\beta)=(x_0,\beta_1,\beta_2)\in E'$, $y=(x_{j_1},\ldots,x_{j_{n-\ell}})\in\rr^{n-\ell}\in E''$ and $x=x_0+\sum_{i=1}^{n-\ell}v_{j_i}x_{j_i}+\beta_1 J_1+\beta_2 J_2\in\OO$. For $i=1,2$, the unit $J_i\in \s_M$ is defined as $J_i:=\beta_i^{-1}\x_{A_i}$ if $|\beta_i|=\|\x_{A_i}\|\ne0$, and $J_i$ equal to any element of $\s_{M_{A_i}}$ if $\beta_i=0$, $\x_{A_i}=0$.
The function with components $\{F_\emptyset,F_{1},F_{2},F_{12}\}$ is a $T$-stem function w.r.t.\ $\beta=(\beta_1,\beta_2)$, namely it satisfies $F_K(x_0,\bar \beta^h,x')=(-1)^{|K\cap\{h\}|}F_K(x_0,\beta,x')$ for $h=1,2$, $K\subseteq\{1,2\}$.  
\end{proposition}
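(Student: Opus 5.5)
The plan is to apply Proposition \ref{pro:SA} twice, first for $A_1$ and then for $A_2$ acting on the resulting stem components, using the commutation relations of Proposition \ref{pro:Commutation relations} to move $\mathscr S_{A_2}$ past $\x_{A_1}$ and $r_{A_1}$.

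\textbf{Step 1 (decomposition w.r.t.\ $A_1$).} Since $f\in\ker\mathscr S_{A_1}$ and $\OO$ is $A_1$-circular, Proposition \ref{pro:SA} gives $f=G_\emptyset+J_1G_1$ on $\OO$. Here $G_\emptyset=f^\circ_{s,A_1}$ and $G_1=-\beta_1^{-1}(\x_{A_1}f)^\circ_{s,A_1}$ are continuous, depend on $x$ only through $x_0,\beta_1$ and the coordinates outside $A_1$, and are even/odd in $\beta_1$ (Remark \ref{rem:even-odd}).

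\textbf{Step 2 (the components lie in $\ker\mathscr S_{A_2}$).} I claim $f^\circ_{s,A_1}$ and $(\x_{A_1}f)^\circ_{s,A_1}$ lie in $\ker\mathscr S_{A_2}$. For the first, $[\mathscr S_{A_2},r_{A_1}]=0$ because $A_1\cap A_2=\emptyset$, so $\mathscr S_{A_2}(f+r_{A_1}f)=0$. For the second, Proposition \ref{pro:Commutation relations} gives $[\underline S_{A_2},\x_{A_1}]=0$, $[\underline S'_{A_2},\x_{A_1}]=0$ and $\{\underline S''_{A_2},\x_{A_1}\}=0$, so each component of $\mathscr S_{A_2}(\x_{A_1}f)$ equals $\pm\x_{A_1}$ times the corresponding component of $\mathscr S_{A_2}f$, which vanishes. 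Dividing by $\beta_1$ is harmless on $\{\beta_1\ne0\}$, since $\beta_1=\pm\|\x_{A_1}\|$ is invariant under $r_i$ and $L_{ij}$ for $i,j\in A_2$. The case $\beta_1=0$ is then handled by continuity.

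\textbf{Step 3 (decomposition w.r.t.\ $A_2$).} Apply Proposition \ref{pro:SA} with $A_2$ to $G_\emptyset$ and to $G_1$ on $\{\beta_1\neq0\}$. This gives $G_\emptyset=F_\emptyset+J_2F_2$ and $G_1=F_1+J_2F_{12}$, with components even/odd in $\beta_2$. These components are obtained by $A_2$-spherical averaging, which commutes with $r_{A_1}$, so they keep their parity in $\beta_1$. Hence $F_K(x_0,\bar\beta^h,y)=(-1)^{|K\cap\{h\}|}F_K$. Substituting yields $f=F_\emptyset+J_2F_2+J_1(F_1+J_2F_{12})$. It remains to rewrite $J_1(J_2F_2)$-type terms into the stated order $F_\emptyset+J_1F_1+J_2F_2+J_1(J_2F_{12})$; this is exactly what the formula already is, since $J_1(J_2 F_{12})$ is kept as a nested product. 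Alternativity is therefore never needed beyond \eqref{eq:anticommute}.

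\textbf{Main obstacle.} The delicate point is continuity and well-definedness of $F_1,F_{12}$ across $\beta_1=0$ and of $F_2,F_{12}$ across $\beta_2=0$. I would define them directly by the explicit formulas
\[
F_{12}=\beta_1^{-1}\beta_2^{-1}\bigl(\x_{A_2}(\x_{A_1}f)^\circ_{s,A_1}\bigr)^\circ_{s,A_2},
\]
and similarly for the other components, up to sign. I would then show that they extend continuously by $0$, using the estimate of Remark \ref{rem:sliceness_continuity_A} applied successively in each variable, with the constant $\omega$ bounding $\|J_1(J_2a)\|$.
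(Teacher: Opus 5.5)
Your proposal is correct and is essentially the paper's proof. You decompose $f$ via Proposition~\ref{pro:SA} for $A_1$, then use Proposition~\ref{pro:Commutation relations} (including $[\mathscr S_{A_2},r_{A_1}]=0$) to place $f^\circ_{s,A_1}$ and $(\x_{A_1}f)^\circ_{s,A_1}$ in $\ker\mathscr S_{A_2}$, decompose again for $A_2$, and extend $F_1,F_2,F_{12}$ continuously by Remark~\ref{rem:sliceness_continuity_A}, with the same explicit formula for $F_{12}$. The only difference is cosmetic: the paper applies Proposition~\ref{pro:SA} to the $C^1$ function $(\x_{A_1}f)^\circ_{s,A_1}$ on all of $\OO$ and divides by $\beta_1$ afterwards, which spares you the restriction to $\{\beta_1\neq0\}$ and the check that division by $\beta_1$ commutes with $\mathscr S_{A_2}$.
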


\begin{proof}
Since $f\in\ker\mathscr S_{A_1}$, Proposition \ref{pro:SA} and Remark \ref{rem:even-odd} give
\[
f=f^\circ_{s,A_1}+\x_{A_1}^{-1}(\x_{A_1}f)^\circ_{s,A_1}. 
\]
The functions $f^\circ_{s,A_1}$ and $(\x_{A_1}f)^\circ_{s,A_1}$ belong to $\ker\mathscr S_{A_2}$. This can be seen directly by definitions, using the commutation properties of Proposition \ref{pro:Commutation relations}, which follow from condition $A_1\cap A_2=\emptyset$. 

We can apply again  Proposition \ref{pro:SA} to $f^\circ_{s,A_1}$ and $(\x_{A_1}f)^\circ_{s,A_1}$ and get
\begin{align*}
f&=(f^\circ_{s,A_1})^\circ_{s,A_2}+\x_{A_2}^{-1}(\x_{A_2}f^\circ_{s,A_1})^\circ_{s,A_2} 
+\x_{A_1}^{-1}
\left(((\x_{A_1}f)^\circ_{s,A_1})^\circ_{s,A_2}+\x_{A_2}^{-1}\left((\x_{A_2}(\x_{A_1}f)^\circ_{s,A_1})^\circ_{s,A_2}\right) \right)\\
&=(f^\circ_{s,A_1})^\circ_{s,A_2}+\x_{A_1}^{-1}((\x_{A_1}f)^\circ_{s,A_1})^\circ_{s,A_2}
+\x_{A_2}^{-1}(\x_{A_2}f^\circ_{s,A_1})^\circ_{s,A_2} +\x_{A_1}^{-1}\left(\x_{A_2}^{-1}\left((\x_{A_2}(\x_{A_1}f)^\circ_{s,A_1})^\circ_{s,A_2}\right) \right)\\
&=(f^\circ_{s,A_1})^\circ_{s,A_2}+J_1(-\beta_1^{-1}(\x_{A_1}f)^\circ_{s,A_1})^\circ_{s,A_2}
+J_2(-\beta_2^{-1}\x_{A_2}f^\circ_{s,A_1})^\circ_{s,A_2}\\
&\quad +J_1\left(J_2\beta_1^{-1}\beta_2^{-1}(\x_{A_2}(\x_{A_1}f)^\circ_{s,A_1})^\circ_{s,A_2} \right)
=:F_{\emptyset}+J_1F_{1}+J_2F_{2}+J_1(J_2F_{12}).
\end{align*}
The functions $F_{1},F_{2},F_{12}$ extend continuously to the points in $\OO$ with $\beta_1=0$ or $\beta_2=0$ (see Remark \ref{rem:sliceness_continuity_A}). 
\end{proof}

Given a partition $\PP=\{A_1,\ldots,A_\ell\}$ of $[n]$, 
a set $\OO\subseteq M$ invariant w.r.t.\ the reflection $r_i$ for every $i=1,\ldots,n$, and a function $f:\OO\to\Aa$, let us define recursively functions $\SP^m_K(f):\OO\to\Aa$, for $m=0,\ldots, \ell$ and $K\subseteq[m]$ as follows:
\begin{equation}\label{def:sp}
\begin{cases}
{}\SP^0_\emptyset(f):=f&\text{\quad for }m=0,
\\\SP^m_K(f):=\left(\x_{A_m}^{\mathbf{1}_K(m)}\SP^{m-1}_{K\setminus\{m\}}(f)\right)^\circ_{s,A_m} &\text{\quad for }m=1,\ldots,\ell.
\end{cases}
\end{equation}
where $\mathbf{1}_{K}$ is the characteristic function of $K$. With this notation, functions $F_\emptyset,F_{1},F_{2},F_{12}$ of the previous proposition can be expressed as
\[\textstyle
F_K=\prod_{i\in K}(-\beta_i)^{-1}\SP^2_K(f).
\]
Thanks to Remark \ref{rem:sliceness_continuity_A}, if $f\in C^0(\OO,\Aa)$, then every function $F_K=\prod_{i\in K}(-\beta_i)^{-1}\SP^m_K(f)$ has a continuous extension to $\OO$. Moreover, for every $m=0,\ldots,\ell-1$ and $K\subseteq[m]$, it holds
\begin{equation}\label{eq:SP}
\SP^m_K(f)=\SP^{m+1}_{K}(f)+ \x_{A_{m+1}}^{-1}\SP^{m+1}_{K\cup\{m+1\}}(f).  
\end{equation}

Given $J_1,\ldots,J_\ell\in\s_{\Aa}$ and an ordered set $K=(k_1,\ldots,k_m)$, with $m\le\ell\le n$ and $k_1,\ldots,k_m\in[\ell]$, 
we will use the notation $[J,a]_K$ (see \cite[Def.\ 8.1]{GhiloniStoppato_arXiv24}) for the product
\[
[J,a]_K=J_{k_1}(J_{k_2}(\cdots (J_{k_{m-1}}(J_{k_m}a))\cdots)).
\]
If $J_{k_1},\ldots, J_{k_m}$ anticommute, then it holds $[J,a]_K=\epsilon_\sigma[J,a]_{\sigma(K)}$ for any permutation $\sigma$, with $\epsilon_\sigma$ denoting the sign of $\sigma$. This follows from the alternativity of the associator in $\Aa$. 
When $m=2$ it holds
\begin{align*}
0&=[J_{k_1},J_{k_2},a]+[J_{k_2},J_{k_1},a]=(J_{k_1}J_{k_2})a-J_{k_1}(J_{k_2}a)+(J_{k_2}J_{k_1})a-J_{k_2}(J_{k_1}a)\\
&=-J_{k_1}(J_{k_2}a)-J_{k_2}(J_{k_1}a)=-[J,a]_{(k_1,k_2)}-[J,a]_{(k_2,k_1)}.
\end{align*}
The general case follows easily from this.

\begin{theorem}\label{teo:kerSP}
Let $\PP=\{A_1,\ldots,A_\ell\}$ be a partition of the set $[n]$. 
Let $\OO=\OO_D$ be a $\PP$-circular open subset of $M$. 
If $f\in\ker\mathscr S_\PP$, then for every $K\subseteq[\ell]$ there exists a continuous function $F_K:D\subset\rr^{\ell+1}\to\Aa$ such that for every $x\in\OO$ it holds
\begin{equation}\label{eq:fJF}
f(x)=\sum_{K\subseteq[\ell]}[J,F_K(x_0,\beta)]_K.   
\end{equation}
Here $(x_0,\beta)=(x_0,\beta_1,\ldots,\beta_\ell)\in D$ and $x=x_0+\sum_{i=1}^\ell\beta_i J_i\in\OO$. The units $J_i\in \s_M$ are defined as $J_i:=\beta_i^{-1}\x_{A_i}$ if $|\beta_i|=\|\x_{A_i}\|\ne0$, and $J_i$ equal to any element of $\s_{M_{A_i}}$ if $\beta_i=0$, $\x_{A_i}=0$.
The function with components $F_K$ 
satisfies 
\begin{equation}\label{eq:Tstem}
F_K(x_0,\bar \beta^h)=(-1)^{|K\cap\{h\}|}F_K(x_0,\beta)
\end{equation}
for $h=1,\ldots,\ell$, $K\subseteq[\ell]$. 
Conversely, if $f(x)=\sum_{K\subseteq[\ell]}[J,F_K(x_0,\beta)]_K$ with $F_K$ satisfying \eqref{eq:Tstem}, then 
$\mathscr S_\PP f=0$ on $\OO$.
\end{theorem}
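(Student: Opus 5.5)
The plan is induction on $\ell$, generalizing the proof of Proposition \ref{pro:kerS2}, with the recursively defined functions $\SP^m_K(f)$ of \eqref{def:sp} as the main tool. Set $F_K:=\prod_{i\in K}(-\beta_i)^{-1}\SP^\ell_K(f)$, with the product taken in a fixed order, say increasing. These are real scalar factors, so the order does not matter.

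\emph{Step 1 (invariance).} For each $m$, the functions $\SP^{m}_K(f)$ lie in $\ker\mathscr S_{A_{m+1}}\cap\cdots\cap\ker\mathscr S_{A_\ell}$. This follows by induction on $m$. The operation $g\mapsto g^\circ_{s,A_m}=\frac12(1+r_{A_m})g$ and left multiplication by $\x_{A_m}$ both (anti)commute with $\mathscr S_{A_p}$ for $p\neq m$, by Proposition \ref{pro:Commutation relations}, because the blocks are disjoint. Since kernels are preserved under anticommutation up to sign, membership in $\ker\mathscr S_{A_p}$ passes from $\SP^{m-1}$ to $\SP^m$. In addition, $\SP^\ell_K(f)$ is $r_i$-invariant for every $i\in A_m$ and every $m$. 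It is invariant under $r_{A_m}$ by construction, and Lemma \ref{PerronFrobeniusA} upgrades this to each single $r_i$, since $\widetilde{\underline S}_{A_m}$ annihilates it. This last point is exactly what $\underline S'_{A_m}f=\underline S''_{A_m}f=0$ encode once propagated through the construction. Moreover, once $r_i$-invariance holds, the proof of Proposition \ref{pro:SA} gives $\Gamma_{M_{A_m}}\SP^\ell_K(f)=0$. Lemma \ref{lemma:KerGammaMA} then shows that $\SP^\ell_K(f)$ is constant on the translated spheres $\s_{x,A_m}$ for every $m$. Hence each $F_K$ descends to a function of $(x_0,\beta)\in D$, using the $\PP$-circularity of $\OO$ so that these sets lie in $\OO$ and are connected.

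\emph{Step 2 (reconstruction).} Iterate \eqref{eq:SP} from $m=0$ to $\ell$ to obtain
\[
f=\sum_{K\subseteq[\ell]}\x_{A_{k_1}}^{-1}\bigl(\x_{A_{k_2}}^{-1}(\cdots\x_{A_{k_p}}^{-1}\SP^\ell_K(f))\bigr),
\]
with $K=\{k_1<\cdots<k_p\}$. Here care is needed with nonassociativity: \eqref{eq:SP} applied to $\SP^{m}_K$ inserts $\x_{A_{m+1}}^{-1}$ on the left of the later terms, so the nesting order must be tracked. One may need to use the anticommutation identity $[J,a]_K=\epsilon_\sigma[J,a]_{\sigma(K)}$ to reorder into the form $[J,\cdot]_K$. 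Then substitute $\x_{A_i}^{-1}=-\beta_i^{-1}J_i$ and pull the real scalars out to get \eqref{eq:fJF}. Continuity of $F_K$ across $\beta_i=0$ follows from Remark \ref{rem:sliceness_continuity_A}, as already noted before the theorem.

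\emph{Step 3 (parity \eqref{eq:Tstem}).} Reflecting $\beta_h\mapsto-\beta_h$ corresponds to $J_h\mapsto -J_h$ at the same point $x$. Since $\SP^\ell_K(f)$ is $r_{A_h}$-invariant, the factor $\beta_h^{-1}$ present exactly when $h\in K$ gives the sign $(-1)^{|K\cap\{h\}|}$.

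\emph{Converse.} Given \eqref{eq:fJF} with \eqref{eq:Tstem}, fix $m$ and group the terms as $f=G_0+\x_{A_m}G_1$. Here $G_0$ collects the terms with $m\notin K$, and $G_1$ collects the terms with $m\in K$ after moving $J_m$ to the front with signs and writing $J_m=\beta_m^{-1}\x_{A_m}$. The functions $G_0,G_1$ depend on the $A_m$-variables only through $\beta_m$, with even parity. The converse part of Proposition \ref{pro:SA}, which uses only this structure and $\{\underline S_{A_m},\x_{A_m}\}=0$, then gives $\mathscr S_{A_m}f=0$. The factors $J_i$ with $i\neq m$ commute past the relevant operators by Proposition \ref{pro:Commutation relations}.

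The main obstacle I expect is Step 1. The difficulty is checking that $\underline S'_{A_p}$ and $\underline S''_{A_p}$ remain zero on the iterated spherical values, which requires $\mathscr S_{A_p}$ to commute with $\x_{A_m}^{\pm1}$ and with $r_{A_m}$. The tracking of the nested products in Step 2 is the other delicate point.
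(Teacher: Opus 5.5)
Your proposal is correct and follows essentially the same route as the paper. You set $F_K=\prod_{i\in K}(-\beta_i)^{-1}\SP^\ell_K(f)$, propagate membership in the later kernels $\ker\mathscr S_{A_p}$ via Proposition \ref{pro:Commutation relations}, and reconstruct $f$ by iterating \eqref{eq:SP}; for the converse you regroup the terms as an even/odd pair in $\beta_m$ and invoke the converse of Proposition \ref{pro:SA}. Your Step 1 simply unpacks what the paper obtains by applying Proposition \ref{pro:SA} at each stage. In Step 2 no reordering is actually needed, since each application of \eqref{eq:SP} inserts the new $\x_{A_{m+1}}^{-1}$ innermost, which yields $[J,\cdot]_K$ directly with increasing indices.
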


\begin{proof}
We can iterate the procedure described in the proof of Proposition \ref{pro:kerS2}, using the fact that for all $m=0\ldots,\ell-1$ and $K\subseteq[m]$, the function $\SP^m_K(f)$ belongs to $\ker\mathscr S_{A_{m+1}}$. After setting
\[\textstyle
F_K:=\prod_{i\in K}(-\beta_i)^{-1}\SP^m_K(f),
\]
and using \eqref{eq:SP}, we find that at every step $m$ we have
\[
f(x)=\sum_{K\subseteq[m+1]} \x_{A_{k_1}}^{-1}(\x_{A_{k_2}}^{-1}(\cdots(\x_{A_{k_{m+1}}}^{-1}\SP^{m+1}_{K\cup\{m+1\}}(f)(x))\cdots))=\sum_{K\subseteq[m+1]}[J,F_K(x_0,\beta)]_K.
\]
When $m=\ell-1$ we get \eqref{eq:fJF}. 
The (anti)symmetric properties of the functions $F_K$ w.r.t.\ $\beta_1,\ldots,\beta_\ell$ follow immediately from their definition. 

Conversely, if equation \eqref{eq:fJF} holds, then one can write
\[
f(x)=\sum_{K\not\ni 1}[J,F_K(x_0,\beta)]_K+J_1\sum_{K\not\ni 1}[J,F_{\{1\}\cup K}(x_0,\beta)]_K=:\widetilde F_\emptyset+J_1\widetilde F_1,
\]
with $\widetilde F_\emptyset,\widetilde F_1$ an even/odd pair w.r.t.\ $\beta_1$. Proposition \ref{pro:SA} gives $\mathscr S_{A_1}f=0$. 
Since $A_i\cap A_j=\emptyset$ for every $i\ne j$, the units $J_{1},\ldots, J_{\ell}$ anticommute.
Using the property $[J,F_K(x_0,\beta)]_K=\epsilon_\sigma[J,F_K(x_0,\beta)]_{\sigma(K)}$, valid for any permutation $\sigma$, we can write $f(x)=\hat F_\emptyset+J_i\hat F_1$ for every $i=2,\ldots,\ell$, with $\hat F_\emptyset,\hat F_1$ an even/odd pair w.r.t.\ $\beta_i$. Proposition \ref{pro:SA} gives that it also holds $\mathscr S_{A_2}f=\ldots=\mathscr S_{A_\ell}f=0$.
\end{proof}

\begin{definition}\label{def:pslice}
Let $\PP=\{A_1,\ldots,A_\ell\}$ be a partition of the set $[n]$. 
Let $\OO=\OO_D$ be a $\PP$-circular open subset of $M$. For every $K\subseteq[\ell]$, let  $F_K:D\subset\rr^{\ell+1}\to\Aa$ be a function which satisfies condition \eqref{eq:Tstem}. 
A function $f:\OO\to\Aa$ such that $f(x)=\sum_{K\subseteq[\ell]}[J,F_K(x_0,\beta)]_K$ for every $x=x_0+\sum_{i=1}^\ell\beta_i J_i\in\OO$ will be called the \emph{$\PP$-slice function} induced on $\OO$ by the \emph{$\PP$-stem function} $F=(F_K)_{K\subseteq[\ell]}$. 
\end{definition}

In view of Theorem \ref{teo:kerSP}, a $C^1$ function $f$ is $\PP$-slice if and only if it belongs to $\ker\mathscr S_\PP$. Observe that every function $f:\OO\to\Aa$ is $\PP$-slice for $\PP=\{\{1\},\ldots,\{n\}\}$.

\begin{proof}[Proof of Proposition \ref{pro:Tregular}]
Let $\PP=\{A_1,\ldots,A_\ell\}$ with $\ell=t_0+\tau$ and 
\[
A_1=\{t_0+1,\ldots,t_1\},\ldots,A_{\tau}=\{t_{\tau-1}+1,\ldots,t_\tau\},
A_{\tau+1}=\{1\},\ldots,A_{\tau+t_0}=\{t_0\},
\]
where the list ends with $A_{\tau}$ if $t_0=0$.
For every $i=1,\ldots,\tau$, let $\s_i:=\s_{\Aa}\cap M_{A_{i}}$ be the unit sphere in $M_{A_{i}}\cap\ker(t)$. 
Let $J=(J_1,\ldots,J_\tau)\in\prod_{i=1}^\tau\s_i$. Consider the hypercomplex subspace $M_J=\langle 1,v_1,\ldots,v_{t_0},J_1,\ldots,J_\tau\rangle$, with real coordinates $(x_0,x_1,\ldots,x_{t_0},\beta_1,\ldots,\beta_\tau)$ w.r.t.\ the basis $\B_J=(1,v_1,\ldots,v_{t_0},J_1,\ldots,J_\tau)$. 
The Cauchy-Riemann operator of $M_J$ is
\[
\dibar_{M_J}=\partial_{x_0}+\sum_{j=1}^{t_0}v_j\partial_{x_j}+\sum_{i=1}^\tau J_i\partial_{\beta_i}=\dibar_J,
\]
where the notation $\dibar_J$ is the one adopted in \cite[\S3]{GhiloniStoppatoJGP}. A function $f$ is $T$-regular in $\OO$ if and only if for every $J=(J_1,\ldots,J_\tau)$ as above, the restriction of $f$ to $\OO\cap M_J$ is monogenic, namely, $\dibar_Jf(x_0+\sum_{j=1}^{t_0}v_jx_j+\sum_{i=1}^\tau J_i\beta_i)=0$ on $\OO\cap M_J$.  

Given $f\in\F_\PP(\OO)$, we have, in particular, $\underline S_{A_{1}}f=\cdots =\underline S_{A_{\tau}}f=0$. From \eqref{eq:SA} we get, for every $i=1,\ldots,\tau$, that
\begin{equation}\label{eq:DD_P}
\DD_{A_{i}}f=-\x_{A_{i}}^{-1}\Eu_{A_{i}} f=\tfrac{\x_{A_{i}}}{\|\x_{A_{i}}\|}\sum_{j\in A_i}\tfrac{x_j}{\|\x_{A_i}\|}\partial_{x_j}f=J_i\partial_{\beta_i}f,  
\end{equation}
when $\x_{A_{i}}\ne0$ and $J_i=\text{sign}(\beta_i)\tfrac{\x_{A_{i}}}{\|\x_{A_{i}}\|}\in\s_i$. 
Since $\DD_{A_{\tau+j}}=v_{j}\partial_{x_{j}}$ for $j=1,\ldots,t_0$, it holds 
\[
0=D_{\PP}f=\bigg(\partial_{x_0}+\sum_{j=1}^{t_0}v_j\partial_{x_j}+\sum_{i=1}^{\tau}\DD_{A_{i}}\bigg)f=\dibar_J f
\]
for every $J$, and $f$ is $T$-regular. 

Since $\mathscr S_{\{\tau+j\}}=0$ for every $j=1,\ldots,t_0$, the condition $\mathscr S_{\PP}f=0$ is equivalent to $f\in\ker \mathscr S_{A_1}\cap\cdots\cap\ker \mathscr S_{A_\tau}$. 
If in the iterative process described in the proof of Theorem \ref{teo:kerSP} we stop at step $m=\tau-1$, we obtain the decomposition
\[
f(x)=\sum_{H\subseteq[\tau] }[J, F_H(x_0,x_1,\ldots,x_{t_0},\beta_1,\ldots,\beta_\tau)]_H,  
\]
with functions $F_H(x_0,x_1,\ldots,x_{t_0},\beta_1,\ldots,\beta_\tau):=\prod_{i\in H}(-\beta_i)^{-1}\SP^{\tau}_H(f)(x)$ satisfying 
\[
F_H(x_0,x_1,\ldots,x_{t_0},\beta_1,\ldots,-\beta_h,\ldots,\beta_\tau)=(-1)^{|H\cap\{h\}|} F_H(x_0,x_1,\ldots,x_{t_0},\beta_1,\ldots,\beta_h,\ldots,\beta_\tau)
\]
for $h=1,\ldots,\tau$, $H\subseteq[\tau]$, showing that $f$ is a $T$-function. 

Conversely, if $f$ is a 
$T$-regular function of class $C^1$ in $\OO$, then 
the converse part of Theorem \ref{teo:kerSP} yields that $f\in\ker \mathscr S_\PP$. 
Moreover, $\DD_{A_i}f=-\x_{A_{i}}^{-1}\Eu_{A_{i}} f=J_i\partial_{\beta_i}f$ for every $J_i\in\s_i$ and then, at $x=x_0+\sum_{j=1}^{t_0}v_jx_j+\sum_{i=1}^\tau\beta_i J_i$ it holds
\[
D_{\PP}f(x)=\bigg(\partial_{x_0}+\sum_{j=1}^{t_0}v_j\partial_{x_j}+\sum_{i=1}^{\tau}\DD_{A_{i}}\bigg)f(x)=\dibar_J f=0
\]
for every $J$, that is, $D_{\PP}f=0$ on $\OO$ and $f\in\F_\PP(\OO)$. 
\end{proof}

\section*{Aknowledgments}





\begin{thebibliography}{10}

\bibitem{binosi2025dunklapproachsliceregular}
G.~Binosi, H.~De~Bie, and P.~Lian.
\newblock Dunkl approach to slice regular functions.
\newblock {\em Ann. Mat. Pura Appl. (4)}, 2025.

\bibitem{BDS}
F.~Brackx, R.~Delanghe, and F.~Sommen.
\newblock {\em Clifford analysis}, volume~76 of {\em Research Notes in
  Mathematics}.
\newblock Pitman (Advanced Publishing Program), Boston, MA, 1982.

\bibitem{Ren_et_al}
P.~Cerejeiras, U.~K\"ahler, and G.~Ren.
\newblock Clifford analysis for finite reflection groups.
\newblock {\em Complex Var. Elliptic Equ.}, 51(5-6):487--495, 2006.

\bibitem{CoSaSt2009Israel}
F.~Colombo, I.~Sabadini, and D.~C. Struppa.
\newblock Slice monogenic functions.
\newblock {\em Israel J. Math.}, 171:385--403, 2009.

\bibitem{DeBieGenestVinet}
H.~De~Bie, V.~X. Genest, and L.~Vinet.
\newblock The {$\Bbb{Z}_2^n$} {D}irac-{D}unkl operator and a higher rank
  {B}annai-{I}to algebra.
\newblock {\em Adv. Math.}, 303:390--414, 2016.

\bibitem{deBieOrstedSombergSoucek}
H.~De~Bie, B.~{\O}rsted, P.~Somberg, and V.~Sou\v cek.
\newblock Dunkl operators and a family of realizations of
  {$\mathfrak{osp}(1|2)$}.
\newblock {\em Trans. Amer. Math. Soc.}, 364(7):3875--3902, 2012.

\bibitem{DentoniSce}
P.~Dentoni and M.~Sce.
\newblock Funzioni regolari nell'algebra di {C}ayley.
\newblock {\em Rend. Sem. Mat. Univ. Padova}, 50:251--267 (1974), 1973.

\bibitem{Dunkl}
C.~F. Dunkl.
\newblock Differential-difference operators associated to reflection groups.
\newblock {\em Trans. Amer. Math. Soc.}, 311(1):167--183, 1989.

\bibitem{DunklXu}
C.~F. Dunkl and Y.~Xu.
\newblock {\em Orthogonal polynomials of several variables}, volume~81 of {\em
  Encyclopedia of Mathematics and its Applications}.
\newblock Cambridge University Press, Cambridge, 2001.

\bibitem{GeStoSt2013}
G.~Gentili, C.~Stoppato, and D.~C. Struppa.
\newblock {\em Regular functions of a quaternionic variable}.
\newblock Springer Monographs in Mathematics. Springer, Heidelberg, 2013.

\bibitem{GeSt2006CR}
G.~Gentili and D.~C. Struppa.
\newblock A new approach to {C}ullen-regular functions of a quaternionic
  variable.
\newblock {\em C. R. Math. Acad. Sci. Paris}, 342(10):741--744, 2006.

\bibitem{GeSt2007Adv}
G.~Gentili and D.~C. Struppa.
\newblock A new theory of regular functions of a quaternionic variable.
\newblock {\em Adv. Math.}, 216(1):279--301, 2007.

\bibitem{GeStRocky}
G.~Gentili and D.~C. Struppa.
\newblock Regular functions on the space of {C}ayley numbers.
\newblock {\em Rocky Mountain J. Math.}, 40(1):225--241, 2010.

\bibitem{SliceFueterRegular}
R.~Ghiloni.
\newblock Slice {F}ueter-regular functions.
\newblock {\em J. Geom. Anal.}, 31(12):11988--12033, 2021.

\bibitem{AIM2011}
R.~Ghiloni and A.~Perotti.
\newblock Slice regular functions on real alternative algebras.
\newblock {\em Adv. Math.}, 226(2):1662--1691, 2011.

\bibitem{VolumeCauchy}
R.~Ghiloni and A.~Perotti.
\newblock Volume {C}auchy formulas for slice functions on real associative
  *-algebras.
\newblock {\em Complex Var. Elliptic Equ.}, 58(12):1701--1714, 2013.

\bibitem{Gh_Pe_GlobDiff}
R.~Ghiloni and A.~Perotti.
\newblock Global differential equations for slice regular functions.
\newblock {\em Math. Nachr.}, 287(5-6):561--573, 2014.

\bibitem{AlgebraSliceFunctions}
R.~Ghiloni, A.~Perotti, and C.~Stoppato.
\newblock The algebra of slice functions.
\newblock {\em Trans. Amer. Math. Soc.}, 369(7):4725--4762, 2017.

\bibitem{GhiloniStoppatoJGP}
R.~Ghiloni and C.~Stoppato.
\newblock A unified notion of regularity in one hypercomplex variable.
\newblock {\em J. Geom. Phys.}, 202:Paper No. 105219, 13, 2024.

\bibitem{GhiloniStoppato_arXiv24}
R.~Ghiloni and C.~Stoppato.
\newblock A unified theory of regular functions of a hypercomplex variable.
\newblock {\em Bulletin des Sciences Math\'ematiques}, page 103794, 2026.

\bibitem{GHS}
K.~G{\"u}rlebeck, K.~Habetha, and W.~Spr{\"o}{\ss}ig.
\newblock {\em Holomorphic functions in the plane and {$n$}-dimensional space}.
\newblock Birkh\"auser Verlag, Basel, 2008.

\bibitem{Huo_Ren_Xu_arXiv25}
Q.~Huo, G.~Ren, and Z.~Xu.
\newblock Monogenic functions over real alternative *-algebras: fundamental
  results and applications, 2025.

\bibitem{JinRenSabadini2020}
M.~Jin, G.~Ren, and I.~Sabadini.
\newblock Slice {D}irac operator over octonions.
\newblock {\em Israel J. Math.}, 240(1):315--344, 2020.

\bibitem{MeyerBook}
C.~Meyer.
\newblock {\em Matrix analysis and applied linear algebra}.
\newblock Society for Industrial and Applied Mathematics (SIAM), Philadelphia,
  PA, 2000.
\newblock With 1 CD-ROM (Windows, Macintosh and UNIX) and a solutions manual
  (iv+171 pp.).

\bibitem{MT1931}
G.~C. {Moisil} and N.~{Theodoresco}.
\newblock {Fonctions holomorphes dans l'espace}.
\newblock {\em {Mathematica, Cluj}}, 5:142--159, 1931.

\bibitem{Orsted}
B.~{\O}rsted, P.~Somberg, and V.~Sou\v cek.
\newblock The {H}owe duality for the {D}unkl version of the {D}irac operator.
\newblock {\em Adv. Appl. Clifford Algebr.}, 19(2):403--415, 2009.

\bibitem{CRoperators}
A.~Perotti.
\newblock Cauchy-{R}iemann operators and local slice analysis over real
  alternative algebras.
\newblock {\em J. Math. Anal. Appl.}, 516(1):Paper No. 126480, 34, 2022.

\bibitem{Rosler}
M.~R\"osler.
\newblock Dunkl operators: theory and applications.
\newblock In {\em Orthogonal polynomials and special functions ({L}euven,
  2002)}, volume 1817 of {\em Lecture Notes in Math.}, pages 93--135. Springer,
  Berlin, 2003.

\bibitem{Struppa2015Algebras}
D.~C. Struppa.
\newblock {\em Slice Hyperholomorphic Functions with Values in Some Real
  Algebras}, pages 1631--1650.
\newblock Springer Basel, Basel, 2015.

\bibitem{Sabadini_Xu_TAMS}
Z.~Xu and I.~Sabadini.
\newblock Generalized partial-slice monogenic functions.
\newblock {\em Trans. Amer. Math. Soc.}, 378(2):851--883, 2025.

\bibitem{Sabadini_Xu_Octo_TAMS}
Z.~Xu and I.~Sabadini.
\newblock Generalized partial-slice monogenic functions: the octonionic case.
\newblock 2025.
\newblock to appear in Trans. Amer. Math. Soc.

\end{thebibliography}

\end{document}